\documentclass{amsart}
\usepackage[utf8]{inputenc}

\usepackage{csquotes}

\usepackage{amsmath}
\usepackage{amsfonts}
\usepackage{mathtools}
\usepackage{amssymb,latexsym,stmaryrd}
\usepackage{tikzsymbols}

\usepackage{enumitem}

\usepackage{tikz}
\usetikzlibrary{matrix,arrows,calc,external}

\usepackage{calc}

\usepackage[mode=multiuser,status=draft,lang=french]{fixme}
\fxsetup{theme=colorsig}

\usepackage{hyperref}
\hypersetup{final,colorlinks=true,
	linkcolor=[rgb]{0.1, 0.1, 0.44},
	citecolor=[rgb]{0.1, 0.1, 0.44}}

\usepackage{amsthm}

\usepackage[capitalise]{cleveref}

\usepackage{marginnote}

\usepackage[style=alphabetic, bibstyle=alphabetic, backend=bibtex, hyperref=true,url=false,doi=true,isbn=false]{biblatex}

\addbibresource{biblio.bib}

\title{Embeddability on functions: order and chaos}

\author{Rapha\"{e}l Carroy}
\email{ raphael.carroy@univie.ac.at}
\address{Kurt Gödel Research Center for Mathematical Logic, Universität Wien, Währinger Strasse 25, 1090 Wien, Austria}

\author{Yann Pequignot}
\email{pequignot@math.ucla.edu}
\address{Department of Mathematics
	University of California, Los Angeles
	Box 951555
	Los Angeles, CA 90095-1555
	USA}

\author{Zolt\'an Vidny\'anszky}
\email{ zoltan.vidnyanszky@univie.ac.at}
\address{Kurt Gödel Research Center for Mathematical Logic, Universität Wien, Währinger Strasse 25, 1090 Wien, Austria}

\subjclass[2010]{Primary: 03E15; 26A21; 54C05; 54C25;  Secondary: 06A07}
\keywords{embedding; analytic complete quasi-order; better quasi-order;  Baire class}
\thanks{The first and third author were supported respectively by FWF Grants P28153 and P29999. The second author gratefully acknowledges the support of the Swiss National Science Foundation (SNF) through grant P2LAP2$\_$164904. The third author was also partially supported by he National Research, Development and Innovation Office -- NKFIH,  grants  no.  113047  and  104178.}

\FXRegisterAuthor{Y}{aY}{Y} 
\FXRegisterAuthor{R}{aR}{R} 
\FXRegisterAuthor{Z}{aZ}{Z} 

\theoremstyle{plain}
\newtheorem{theorem}{Theorem}[section]
\newtheorem{lemma}[theorem]{Lemma}
\newtheorem{fact}[theorem]{Fact}

\newtheorem{corollary}[theorem]{Corollary}
\newtheorem{conjecture}[theorem]{Conjecture}
\newtheorem{proposition}[theorem]{Proposition}
\newtheorem*{theorem*}{Theorem}

\newtheorem*{claim*}{Claim}
\newtheorem*{proposition*}{Proposition}

\theoremstyle{definition}
\newtheorem{definition}[theorem]{Definition}

\newtheorem{question}[theorem]{Question}

\theoremstyle{remark}
\newtheorem{remark}[theorem]{Remark}

\newtheorem*{definition*}{Definition}


\newcommand{\zerodim}{$0\text{-}$dimen\-sional}

\DeclareMathOperator{\proj}{proj}

\newcommand{\wqo}{\textsc{wqo}}
\newcommand{\bqo}{\textsc{bqo}}

\newcommand{\N}{{\mathbb N}}

\newcommand{\Q}{\mathbb Q}

\newcommand{\dom}{\operatorname{dom}}
\newcommand{\rest}[1]{ |_{#1}}

\newcommand{\G} {\mathbb{G}}

\newcommand{\Gr}{\mathcal{G}}

\newcommand{\rao}{\rightarrow}
\newcommand{\lrao}{\leftrightarrow}

\DeclareMathOperator{\diam}{diam}

\newcommand{\IFF}{\quad\longleftrightarrow\quad}

\newcommand{\sub}{\subseteq}
\newcommand{\emb}{\sqsubseteq} 
\newcommand{\ihom}{\preccurlyeq} 

\newcommand{\bijc}[1]{\left\langle #1 \right\rangle}

\DeclareMathOperator{\im}{im} 

\newcommand{\C}{C} 
\newcommand{\LC}{LC} 
\newcommand{\CCG}{\mathbb{G}} 

\DeclareMathOperator{\isolated}{Is} 
\DeclareMathOperator{\CB}{CB} 

\newcommand{\lin}{Lin_\text{c}} 
\newcommand{\POD}{P0D} 

\newcommand{\QOcolor}{\mathrm{L}}

\newcommand{\ana}{\mathbf{\Sigma}^1_1}


\crefformat{equation}{#2(#1)#3}
\crefformat{enumi}{#2(#1)#3}
\Crefformat{enumi}{#2(#1)#3}
\crefname{enumi}{}{}

\begin{document}
	
	\begin{abstract}

		We study the quasi-order of topological embeddability on definable functions between Polish \zerodim{} spaces.
		
		We first study the descriptive complexity of this quasi-order restricted to the space of continuous functions. Our main result is the following dichotomy: the embeddability quasi-order restricted to continuous functions from a given compact space to another is either an analytic complete quasi-order or a well-quasi-order.

		
		We then turn to the existence of maximal elements with respect to embeddability in a given Baire class. 
		We prove that no Baire class admits a maximal element, except for the class of continuous functions which admits a maximum element.   
		
	\end{abstract}
	
	\maketitle

	\tableofcontents

	
	\section{Introduction}
	We study the quasi-order of (topological) embeddability on definable functions between Polish spaces. A function $f$ is said to embed into a function $g$, in symbols $f\emb g$, if there exist topological embeddings $\sigma:\dom f\to \dom g$ and $\tau:\im f \to \im g$ such that $g\circ \sigma= \tau \circ f$. Since the identity is an embedding and embeddings compose, embeddability is a reflexive and transitive relation on functions, namely a quasi-order.
	
	This way of comparing functions was first used by \textcite{soleckidecomposing} who proved the existence of a non $\sigma$-continuous Baire class $1$ function which is minimal for embeddability.       
	Several basis results of this sort have later been obtained \cite{pawlikowski2012decomposing,carroymiller1}, therefore confirming the relevance of this quasi-order for the study of definable functions.
	
	In the first part of this article, we focus specifically on the properties of the quasi-order of embeddability when restricted to the space $\C(X,Y)$ of continuous functions between two Polish \zerodim{} spaces $X$ and $Y$. We endow these spaces of functions with the topology of compact convergence, which coincides in this case with the compact-open topology. While this turns $\C(X,Y)$ into a Polish space if and only if $X$ is locally compact, this choice is briefly discussed in \cref{subsec:limitationTopologies}.
	
	Our first result shows that the embeddability on $\C(X,Y)$ is quite complicated in most cases, in particular when both $X$ and $Y$ are uncountable. More precisely, let $\CCG$ denote the Polish space of graphs on $\N$ and consider the quasi-order of injective homomorphisms given by $G\ihom H$ iff there exists an injective homomorphism from $G$ to $H$.
	\Textcite[Thm 3.5]{louveau2005complete} showed that the homomorphism quasi-order on countable directed graphs is analytic complete and their proof also shows that $(\CCG,\ihom)$ is an analytic complete quasi-order. We prove the following as \cref{t:thechaossideYY}.
	
	\begin{theorem}\label{thmINTRO1}
		Let $X$ and $Y$ be Polish \zerodim{} spaces. If $X$ has infinitely many non-isolated points and $Y$ is not discrete, then there exists a continuous map $\G \to \C(X,Y)$ which reduces $\ihom$ to $\emb$.
	\end{theorem}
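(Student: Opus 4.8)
The plan is to construct a continuous map $G \mapsto f_G$ from $\G$ to $\C(X,Y)$ and to show directly that $G \ihom H$ if and only if $f_G \emb f_H$; since $(\G,\ihom)$ is analytic complete, this yields the theorem. First I would isolate the combinatorial skeleton provided by the hypotheses. Using that $X$ is \zerodim{} Polish with infinitely many non-isolated points, I extract a countable family of pairwise disjoint clopen sets $C_n \subseteq X$ ($n \in \N$), each homeomorphic to $\omega+1$, say $C_n = \{c_n^k : k \in \N\} \cup \{c_n^\infty\}$ with $c_n^k \to c_n^\infty$; these are the \emph{vertex gadgets}. On the codomain side, since $Y$ is non-discrete and \zerodim{} I fix a nontrivial convergent sequence $y_k \to y_\infty$ of distinct points together with a further point $y_\dagger$ that a clopen set separates from the sequence, so that $y_\dagger$ is isolated in every image I produce. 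The colors placed on the gadgets will be chosen among the $y_k$ and organized into levels shrinking to $y_\infty$.

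The function $f_G$ is then defined as follows. Fix once and for all a partition of $\N$ into infinitely many infinite pieces $S_{n,m}$ ($m \ne n$), and assign to the domain point $c_n^k$ (for $k \in S_{n,m}$) the potential edge $\{n,m\}$. Choosing in advance distinct \emph{edge colors} $z_{\{n,m\}}$ and distinct \emph{private colors}, all converging to $y_\infty$ and placed so that the color attached to $c_n^k$ tends to $y_\infty$ as $k \to \infty$, I set $f_G(c_n^k) = z_{\{n,m\}}$ if $\{n,m\}$ is an edge of $G$ and a private color otherwise; I put $f_G(c_n^\infty) = y_\infty$, and I send every remaining point of $X$ to the junk value $y_\dagger$. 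Continuity of $f_G$ holds because along each gadget the colors tend to $y_\infty$, and continuity of $G \mapsto f_G$ for the compact-open topology follows from the same fact: on a compact set only finitely many gadgets, and within each only finitely many low-index points, fail to be already forced near $y_\infty$, and those depend on only finitely many edges of $G$.

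For the reduction I would verify both implications. \emph{If $G \ihom H$} via an injective homomorphism $\phi$, define $\tau$ on colors by $y_\infty \mapsto y_\infty$, $y_\dagger \mapsto y_\dagger$, $z_{\{n,m\}} \mapsto z_{\{\phi(n),\phi(m)\}}$ (an edge color of $H$ since $\phi$ is a homomorphism) and privates of $C_n$ to privates of $C_{\phi(n)}$; as $\phi$ is injective this $\tau$ is an injection fixing the unique non-isolated color, hence an embedding of $\im f_G$. Define $\sigma$ to be the identity off $\bigsqcup_n C_n$ and to map each $C_n$ into $C_{\phi(n)}$, sending limit to limit and each colored point to a point of matching $\tau$-color (possible since every target color-class is infinite); this is an embedding and by design $f_H \circ \sigma = \tau \circ f_G$. \emph{Conversely}, given $\sigma,\tau$ witnessing $f_G \emb f_H$, note that $y_\infty$ is the unique non-isolated point of $\im f_G$, so $\tau(y_\infty)=y_\infty$; as the $c_\ell^\infty$ are exactly the points of value $y_\infty$, this forces $\sigma(c_n^\infty)=c_{\phi(n)}^\infty$ for some injection $\phi$, and by convergence a tail of each $C_n$ lands in $C_{\phi(n)}$. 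For an edge $\{n,m\}$ of $G$, pick tail points of color $z_{\{n,m\}}$ in $C_n$ and in $C_m$; their $\sigma$-images lie in $C_{\phi(n)}$ and $C_{\phi(m)}$ and share the $H$-color $\tau(z_{\{n,m\}})$. Since a color occurring in two distinct gadgets of $f_H$ is necessarily an edge color incident to both, $\tau(z_{\{n,m\}}) = z_{\{\phi(n),\phi(m)\}}$ and $\{\phi(n),\phi(m)\}$ is an edge of $H$; thus $\phi$ is an injective homomorphism.

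I expect the main obstacle to be the very first, purely topological, step: a general \zerodim{} Polish $X$ with infinitely many non-isolated points need not contain a \emph{clopen} disjoint family of convergent sequences, because the non-isolated points may themselves accumulate (as in a convergent sequence of convergent sequences), so $\bigsqcup_n C_n$ can fail to be closed and its higher Cantor--Bendixson points must be given value $y_\infty$ and tracked by $\sigma$. Making $f_G$ continuous at these accumulation points, and checking that the witness $\sigma$ remains a global topological embedding there, is the delicate part; everything else reduces to bookkeeping about placing countably many colors so as to both converge to $y_\infty$ and keep $G \mapsto f_G$ continuous.
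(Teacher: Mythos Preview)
Your coloring scheme and the verification of both implications are essentially what the paper does in its model case $\omega^2\to\omega+1$; the difference is that you attempt the construction directly inside $X$, whereas the paper first settles the model case and then transfers it to an arbitrary $X$ via what it calls a \emph{regular pseudo-embedding} $\rho:X\to\omega^2+1$. Your final paragraph is right that gadget-extraction is the crux, but the obstacle you name---accumulation of the points $c_n^\infty$, as in $\omega^2+1$---is only half of the story, and your proposed patch (send those higher Cantor--Bendixson points to $y_\infty$) addresses only that half.

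The other half is that clopen copies of $\omega+1$ may fail to exist at all. Consider $X=\bigsqcup_{n}[\omega]^2_+$: it is Polish \zerodim{} with infinitely many limit points, yet every clopen neighbourhood of a limit point is again homeomorphic to $[\omega]^2_+$, never to $\omega+1$. Any convergent-sequence gadget $C_n$ you select is therefore merely closed, and isolated points of $X\setminus\bigcup_m C_m$ accumulate on $c_n^\infty$; with your separated junk value $y_\dagger$ the function $f_G$ is discontinuous there, while relabelling those stray isolated points with $y_\infty$ would invalidate the step ``the $c_\ell^\infty$ are exactly the points of value $y_\infty$'' in your backward argument. The paper's pseudo-embedding absorbs both difficulties at once: the gadgets are allowed to be clopen copies of $[\omega]^2_+$ (or, when $X$ is uncountable, arbitrary uncountable clopen sets), each collapsed to $\omega+1$ by a continuous, typically non-injective map whose fibers over isolated points all embed into one another; that fiber condition is precisely what permits lifting an embedding of $\omega^2+1$ back to an embedding of $X$, and the leftover part of $X$ is sent to the single point $\infty$ rather than to a value separated from it.
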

	
	Next we study the case where $\C(X,Y)$ does not fall into the scope of the above theorem. Recall that a quasi-order is a well-quasi-order if it is well-founded and has no infinite antichain. Of course an analytic complete quasi-order does not reduce to a well-quasi-order. In fact, while analytic complete quasi-orders are quite wild, the well-quasi-orders are very tame and provide a robust notion of hierarchy. In \cref{sec:ORDER}, we prove the following result as \cref{t:locallyconstant,t:finitelymanylimitpoints}:
	
	\begin{theorem}\label{thmINTRO2}Let $X$ and $Y$ be Polish \zerodim{} spaces. If either $X$ is locally compact with finitely many non-isolated points, or if $Y$ is discrete, then embeddability is a well-quasi-order on $\C(X,Y)$.
	\end{theorem}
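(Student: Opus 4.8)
The engine will be the theory of better-quasi-orders (\bqo{}s), because --- unlike \wqo{}s --- \bqo{}s are preserved under the infinitary operations (finite products, countable sequences and multisets, labelled trees) the argument needs. In each case I would isolate a combinatorial invariant $\Phi(f)$ living in an explicitly \bqo{} quasi-order $(P,\le_P)$ and prove the one-directional reduction $\Phi(f)\le_P\Phi(g)\Rightarrow f\emb g$. Then, given any sequence $(f_n)$ in $\C(X,Y)$, the fact that $P$ is \bqo{} (hence \wqo{}) yields $i<j$ with $\Phi(f_i)\le_P\Phi(f_j)$, so $f_i\emb f_j$; as this holds for every sequence, $\emb$ is a \wqo{}.

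Consider first the case $Y$ discrete, so that $Y$ is countable and every $f\in\C(X,Y)$ is locally constant with clopen fibres. The first step is a gluing lemma: since $\tau$ is then merely an injection of $\im f$ into $\im g$, one checks that $f\emb g$ holds if and only if there is an injection $\tau$ together with topological embeddings $f^{-1}(y)\emb g^{-1}(\tau(y))$ for every $y\in\im f$; the backward direction holds because the target fibres are pairwise disjoint clopen sets, so the union of the partial embeddings automatically assembles into a self-embedding of $X$. Thus $f$ is faithfully coded by the countable multiset of homeomorphism types of its fibres, and $\emb$ becomes the induced multiset quasi-order. By Nash--Williams' theorem, countable multisets over a \bqo{} form a \bqo{}, so it remains to show that the clopen subsets of $X$, ordered by topological embeddability, form a \bqo{}.

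For the case where $X$ is locally compact with finitely many non-isolated points $p_1,\dots,p_k$, I would first record that such an $X$ is countable and decomposes as a discrete part together with finitely many sequences $S_1,\dots,S_k$ converging to the $p_i$. Any self-embedding sends non-isolated points to non-isolated points, hence induces a permutation $\pi$ of $\{p_1,\dots,p_k\}$ and maps each $S_i$ cofinitely into $S_{\pi(i)}$. Accordingly I would code $f$ by the finite tuple of limit values $f(p_i)$, by the multiset of sequence-values converging to each $f(p_i)$, and by the multiset of values on the discrete part, all taken up to the action of $\tau$. Since there are only finitely many choices of $\pi$ and \bqo{} is closed under finite products and (by Nash--Williams) under countable sequences and multisets, the reduction $\Phi(f)\le_P\Phi(g)\Rightarrow f\emb g$ will hold once the labels --- here the countable configurations realised inside $Y$ --- are shown to be \bqo{} under embeddability.

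The main obstacle, common to both cases, is exactly this base step: showing that the relevant family of $0$-dimensional spaces --- clopen subsets of $X$ in the first case, countable subspaces of $Y$ in the second --- is \bqo{} under topological embeddability. I would attack it through the Cantor--Bendixson analysis, reducing each space to a scattered part coded by a well-founded tree labelled by local data, which is handled by the Nash--Williams/Laver \bqo{} theory of labelled trees, together with a homogeneous remainder (a Cantor set among compact pieces, a dense countable piece in the countable case) that embeds maximally. The delicate points will be verifying that the coding is order-preserving in the correct direction --- so that tree embeddings yield genuine topological embeddings respecting, in the second case, the accumulation structure that $\tau$ must preserve --- and that passing from a fixed ambient space to its clopen or countable subspaces keeps the family of types within the scope of the \bqo{} theorem.
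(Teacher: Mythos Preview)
Your overall plan --- exhibit a co-homomorphism from $(\C(X,Y),\emb)$ into an explicitly \bqo{} quasi-order --- is exactly the paper's strategy, and for $Y$ discrete your execution is correct and in one respect cleaner than the paper's: since $\im f,\im g$ are discrete, $\tau$ is merely an injection, so coding $f$ by the countable multiset of fibre-types over the quasi-order $\POD$ of Polish \zerodim{} spaces suffices, and Nash--Williams together with the fact that $\POD$ is \bqo{} finishes the argument. (The paper instead proves the stronger statement that \emph{all} locally constant functions with Polish \zerodim{} domain are \bqo{}, where $\tau$ must be a genuine topological embedding; this forces the heavier machinery discussed below.) Your ``base step'' --- that clopen subsets of $X$ are \bqo{} under embeddability --- is a special case of a theorem of Carroy that all of $\POD$ is \bqo{}; the Cantor--Bendixson/labelled-tree attack you sketch is roughly the shape of that proof, but it is a substantial result in its own right and the paper simply cites it.

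For $X$ locally compact with finitely many limit points, however, there is a real gap. Your coding records the limit values $f(p_i)$, the multisets of sequence-values, and the discrete part as \emph{separate} pieces of data, then invokes closure of \bqo{} under finite products and Nash--Williams for multisets. The problem is the coupling: a witness to $f\emb g$ requires a \emph{single} topological embedding $\tau:\im f\to\im g$ acting simultaneously on all pieces, and comparing the components of your $\Phi(f)$ independently would produce unrelated partial maps with no reason to cohere into one embedding of $\im f$; taking everything ``up to the action of $\tau$'' just reinstates the relation you are trying to analyse. What is actually needed is not merely that countable subspaces of $Y$ are \bqo{}, but that a suitable class of countable spaces \emph{preserves} \bqo{}: for every \bqo{} $Q$, the $Q$-labellings of such spaces compared via label-respecting topological embeddings again form a \bqo{}. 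The paper gets this from the van~Engelen--Miller--Steel theorem that countable linear orders with continuous order-embeddings preserve \bqo{}: one embeds $\im f$ into $\mathbb{Q}$, labels each rational with the local fibre data (the cardinality of its preimage and whether it is the value at a limit point of $X$), and then a single $\mathcal{Q}$-morphism $\tau:\mathbb{Q}\to\mathbb{Q}$ witnessing the comparison of labellings restricts to the required embedding of images and simultaneously controls all the fibre conditions. Your Nash--Williams/Laver tree machinery does not supply this step, because the morphisms on the image side must be \emph{topological} embeddings, not merely combinatorial ones.
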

	
	Our methods unfortunately do not allow us to drop the assumption that $X$ is locally compact in \cref{thmINTRO2} (see \cref{section:last}).
	
	However combining our two theorems, we obtain a dichotomy for the spaces of functions $\C(X,Y)$ with $X$ locally compact. In particular when $X$ is compact, then $\C(X,Y)$ is Polish and the topology coincides with that of the uniform convergence. We show that in this case embeddability is an analytic quasi-order (\cref{EmbedIsAnalytic}). We therefore obtain the following dichotomy.

	\begin{corollary}\label{c:intro}
		Let $X$ and $Y$ be Polish \zerodim{} with $X$ compact. Then exactly one of the following holds: either the embeddability on $\C(X,Y)$ is analytic complete, or it is a well-quasi-order.
	\end{corollary}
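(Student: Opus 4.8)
The plan is to read the dichotomy off the two main theorems together with the analyticity of embeddability, so the whole proof is an assembly of \cref{thmINTRO1,thmINTRO2,EmbedIsAnalytic}. The first observation is that since $X$ is compact it is in particular locally compact, so the local compactness hypothesis of \cref{thmINTRO2} is automatically satisfied, and moreover $\C(X,Y)$ is Polish with the compact-open topology coinciding with the uniform one, so that \cref{EmbedIsAnalytic} applies and $\emb$ is an analytic quasi-order. I would then split into the two complementary cases cut out by the hypotheses of the two theorems.

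In the first case, suppose $X$ has only finitely many non-isolated points, or $Y$ is discrete. Then \cref{thmINTRO2} applies directly (using local compactness in the first subcase) and yields that $(\C(X,Y),\emb)$ is a well-quasi-order. In the complementary case, $X$ has infinitely many non-isolated points \emph{and} $Y$ is not discrete, so \cref{thmINTRO1} furnishes a continuous map reducing $\ihom$ to $\emb$. Since $(\CCG,\ihom)$ is analytic complete, every analytic quasi-order reduces to it; composing such a reduction with the one provided by \cref{thmINTRO1} shows that every analytic quasi-order reduces to $\emb$. As $\emb$ is itself analytic by \cref{EmbedIsAnalytic}, it follows that $\emb$ is analytic complete. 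These two cases are exhaustive and complementary, so one of the two alternatives always holds.

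It remains to verify that the alternatives are mutually exclusive, which is the only step requiring an argument beyond invoking the cited results. Here I would use that an analytic complete quasi-order never reduces to a well-quasi-order: a reduction carries infinite antichains and infinite descending chains to infinite antichains and descending chains, so any quasi-order reducing into a well-quasi-order is itself a well-quasi-order. Consequently, were $\emb$ both analytic complete and a well-quasi-order, then $\emb$ (reducing to itself via the identity) would exhibit an analytic complete quasi-order reducing to a well-quasi-order, a contradiction. Hence exactly one of the two alternatives holds.

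I do not anticipate any genuine obstacle, since all the substantive work is carried out in \cref{thmINTRO1,thmINTRO2,EmbedIsAnalytic}; the corollary is essentially bookkeeping, and the only delicate point is the mutual-exclusivity clause, which reduces to the standard fact that reductions preserve being a well-quasi-order.
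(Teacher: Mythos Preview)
Your proposal is correct and matches the paper's intended argument: the corollary is simply the conjunction of \cref{thmINTRO1}, \cref{thmINTRO2}, and \cref{EmbedIsAnalytic}, with the case split you describe. For mutual exclusivity the paper records in the preliminaries that equality on $\N$ does not reduce to any \wqo{}, hence a \wqo{} is never analytic complete; your formulation via preservation of antichains and descending chains under reductions is an equivalent way to say the same thing.
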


	In \cref{sec:RankEmbed}, we move our attention to Borel functions on some
	Polish space and investigate the existence of maximal elements in Baire classes. Using the universality property of the Hilbert cube, we notice that the projection $p:[0,1]^\omega\times[0,1]^\omega \to [0,1]^\omega$ is a maximum with respect to embeddability on continuous functions between Polish spaces. We then proceed to show that the situation is completely different for the other Baire classes (\cref{t:rank}).
	
	\begin{theorem}\label{thmINTRO3}
		Let $X,Y$ be Polish spaces such that $X$ is uncountable and \zerodim{} and $|Y| \geq 2$. For every countable ordinal $\xi\geq1$, there is no maximal element for embeddability among the Baire class $\xi$ functions from $X$ to $Y$.
	\end{theorem}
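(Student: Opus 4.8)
\emph{The plan} is to deduce the statement from two ingredients: that no Baire class $\xi$ function is a \emph{maximum}, and that one can form "disjoint sums" of Baire class $\xi$ functions inside the fixed domain $X$. Concretely, suppose we know that for every Baire class $\xi$ function $f\colon X\to Y$ and every uncountable \zerodim{} Polish space $Z$ there is a Baire class $\xi$ function $h\colon Z\to Y$ with $h\nemb f$. Given $f$, I would first use that $X$ is uncountable and \zerodim{} to split it as a disjoint union $X=C_0\sqcup C_1$ of two clopen pieces with $C_0\cong X$ and $C_1$ uncountable \zerodim{} Polish (a routine structural lemma, via the Cantor--Bendixson analysis and Brouwer's characterisation of $2^\omega$). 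Taking $h\colon C_1\to Y$ Baire class $\xi$ with $h\nemb f$, define $g\colon X\to Y$ to be a copy of $f$ on $C_0$ (transported through a homeomorphism $C_0\cong X$) and to equal $h$ on $C_1$. Then $g$ is again Baire class $\xi$, since a function that is Baire class $\xi$ on each of finitely many clopen pieces is Baire class $\xi$. Moreover $f\emb g$ via the inclusions $C_0\hookrightarrow X$ and $\im f\hookrightarrow\im g$, whereas $g\emb f$ would, restricting the domain embedding to $C_1$, yield $h\emb f$, a contradiction. Hence $f\emb g$ and $g\nemb f$, so $f$ is not maximal.

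It thus remains to show that \emph{no} Baire class $\xi$ function is a maximum. I would obtain this from an ordinal-valued \emph{rank} $\rho$ on Baire class $\xi$ functions enjoying: (i) monotonicity, $h\emb f\Rightarrow\rho(h)\leq\rho(f)$; (ii) $\rho(f)<\omega_1$ for every $f$; and (iii) cofinality in $\omega_1$, realised already by \emph{two-valued} functions on any prescribed uncountable \zerodim{} Polish domain (this two-valuedness is exactly what keeps the hypothesis $|Y|\geq2$ sufficient). Granting such $\rho$, non-maximality is immediate: given $f$, choose $h$ on $C_1$ with $\rho(h)>\rho(f)$; by (i), $h\emb f$ is impossible, so $h\nemb f$. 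Property (ii) is the crucial one, as it compresses the continuum-many fibrewise comparisons defining $\emb$ into the single countable ordinal $\rho(f)$ that any competitor must merely exceed.

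\emph{The construction of $\rho$ is where I expect the real difficulty.} For $\xi=1$ the natural choice is the oscillation rank: iterate the derivative $F\mapsto F'=\{x\in F: f\restriction F \text{ is discontinuous at } x\}$ on closed subsets of $X$. The Baire--Osgood theorem gives $F'\subsetneq F$ for every nonempty closed $F$, so the process halts at a countable stage $\rho(f)<\omega_1$, as $X$ is hereditarily Lindel\"of; monotonicity holds because the embeddings $\sigma$ and $\tau$ carry continuity points to continuity points; and functions of arbitrarily large countable rank are classical. For general $\xi$ the main obstacle is to isolate the correct level-$\xi$ derivative and to prove its \emph{strict decrease} --- the $\xi$-analogue of Baire--Osgood asserting that a Baire class $\xi$ function restricted to any nonempty closed set is of Baire class $<\xi$ on some nonempty relatively open piece. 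I would look for this in the fine theory of the Baire hierarchy, localising the characterisation of $\mathbf{\Sigma}^0_{1+\xi}$ through iterated derivatives, after which monotonicity under $\emb$ should again be formal.

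Finally, realisability (iii) should follow by transfinitely gluing copies of lower-rank two-valued functions on a sequence of clopen pieces accumulating at a single point: successor stages add one copy, limit stages amalgamate a rank-increasing sequence. The point at which $\xi\geq1$ is genuinely used is here, for this infinite gluing stays Baire class $\xi$ precisely because $\mathbf{\Sigma}^0_{1+\xi}$ is closed under countable unions --- a closure that \emph{fails} at $\xi=0$, which is the ultimate reason the continuous functions, alone among the Baire classes, admit a maximum. The delicate points to monitor are the form of the derivative at limit $\xi$, and the verification that post-composition with the image-embedding $\tau$ leaves it unchanged.
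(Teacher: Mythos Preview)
Your strategy matches the paper's almost exactly: define an ordinal rank on $\mathcal{B}_\xi$ that is monotone under $\emb$, bounded on each function, and unbounded in $\omega_1$ (realised by two-valued functions), then boost from \emph{no maximum} to \emph{no maximal} by placing a high-rank function on one piece of the domain and a copy of $f$ on another. Two technical choices differ.

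First, the paper does not use a clopen splitting but an embedding $h\colon X\to X$ with $h(X)\in\mathbf{\Delta}^0_2$ and $|X\setminus h(X)|=\mathfrak{c}$; this suffices because for $\xi\ge1$ a function that is Baire class $\xi$ on each piece of a $\mathbf{\Delta}^0_2$ partition is globally Baire class $\xi$. Such an $h$ is easy to produce for uncountable \zerodim{} Polish $X$: embed $X$ into $2^\omega$ and then into a Cantor set inside $X$ (the image is $\sigma$-compact, hence $F_\sigma$, when $X$ is $\sigma$-compact), or embed $X$ closedly into a closed copy of $\omega^\omega$ inside $X$ (Hurewicz) otherwise. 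Your clopen claim, by contrast, is not obviously ``routine'': for instance, Brouwer's characterisation of $2^\omega$ cannot yield a clopen Cantor piece in $X=\omega^\omega$, since no nonempty clopen subset of $\omega^\omega$ is compact. The $\mathbf{\Delta}^0_2$ version is both sufficient and more transparent.

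Second, rather than an oscillation-type derivative, the paper uses a separation rank that is uniform in $\xi$: for disjoint $A_1,A_2\in\mathbf{\Pi}^0_{\xi+1}$, let $\rho_{\xi,1}(A_1,A_2)$ be the minimal Hausdorff--Kuratowski length of a $\mathbf{\Delta}^0_{\xi+1}$ set separating them, and set $\rho_\xi(f)=\sup_{y_1\neq y_2}\rho_{\xi,1}(f^{-1}(y_1),f^{-1}(y_2))$. This sidesteps the level-$\xi$ Baire--Osgood analysis you flag as the main difficulty; monotonicity under $\emb$ is immediate (pull back a transfinite difference of $\mathbf{\Pi}^0_\xi$ sets along the embedding $\sigma$), boundedness reduces to the countably many pairs of basic closed sets with disjoint closures, and unboundedness is the Hausdorff--Kuratowski theorem.
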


	\section{Preliminaries}
	
	\subsection{Spaces}
	
	Unless otherwise specified, all topological spaces are assumed to be Polish \zerodim{}. We use variable symbols $X,Y,Z$ for spaces. Our general reference for the descriptive set theory of these spaces is \cite{kechris2012classical}. For clarity we write $(x_n)_n$ to denote a sequence and omit the index set which is assumed to be $\N$. We write $x_n\to x$ in $X$ to say that the sequence $(x_n)_n$ converges to $x$ in a space $X$ and we just write $x_n\to x$ when the space is clear from the context. Since all our spaces are metrizable, a function $\sigma:X\to Y$ is a (topological) embedding if and only if for every $x\in X$ and every sequence $(x_n)_n$ in $X$ the following equivalence holds:
	\[
	\text{$x_n\to x$ in $X$ if and only if $\sigma(x_n)\to \sigma(x)$ in $Y$.}
	\]

	The notation $X=\bigsqcup_{i\in\N}X_i$ stands for the topological sum of a sequence $(X_i)_{i\in\N}$ of spaces and we take $\bigcup_{i\in\N} \{i\}\times X_i$ to be the underlying set.
	This operation is functorial and if we have continuous functions $f_i:X_i \to Y$ for all $i\in\N$, then $\bigsqcup_{i\in\N}f_i$ denotes the continuous map $\bigsqcup_i X_i \to Y$ given by $(i,x)\mapsto f_i(x)$.
	
	If $X$ is a locally compact metrizable space that is not compact, the Alexandrov compactification, or one-point compactification, of $X$ is obtained by adding a new point $\infty$ to $X$ and consider the topology given by the sets $U\subseteq X$ open in $X$ and the sets $V=\{\infty\} \cup X\setminus C$ where $C$ is compact in $X$.
	
	We view countable ordinals as topological spaces for the order topology. For example, $\omega$ is homeomorphic to $\N$ with the discrete topology and $\omega+1$ is homeomorphic to the one-point compactification of $\N$.
	
	The ordinal $\omega^2$ is homeomorphic to the sum of countably many copies of $\omega+1$. Finally, the ordinal $\omega^2+1$ is homeomorphic to the one-point compactification of $\omega^2$. Throughout the article we freely identify $\omega^2$ with $\bigsqcup_{n\in \N} \omega+1$ and $\omega^2+1$ with the Alexandrov compactification of $\bigsqcup_{n\in \N} \omega+1$.
	
	We mention here for later use an easy lemma which allows us to glue together embeddings.\footnote{\cref{LemWedgeEmb} really says that the wedge sum of two embeddings between pointed metrizable spaces is again an embedding.
	}

	\begin{lemma}\label{LemWedgeEmb}
		Let $X$ be a Polish space.
		Suppose that $F_0$ and $F_1$ are two closed sets of $X$ with $X=F_0\cup F_1$ and $F_0\cap F_1=\{z\}$ for some $z\in X$. If $\sigma_i:F_i\to F_i$, $i=0,1$, are two embeddings such that $\sigma_0(z)=\sigma_1(z)$, then $\sigma=\sigma_0\cup \sigma_1:X\to Y$ is an embedding.
	\end{lemma}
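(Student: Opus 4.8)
The plan is to verify directly the sequential characterization of embeddings recalled in the preliminaries, namely that for every $x\in X$ and every sequence $(x_n)_n$ in $X$ one has $x_n\to x$ if and only if $\sigma(x_n)\to\sigma(x)$. Before splitting into the two implications I would record two observations. Since the range of $\sigma_i$ is contained in $F_i$ and $\sigma_0(z)=\sigma_1(z)$, this common value lies in $F_0\cap F_1=\{z\}$, so in fact $\sigma_0(z)=\sigma_1(z)=z$; consequently $\sigma=\sigma_0\cup\sigma_1$ is a well-defined map with $\sigma(F_i)\subseteq F_i$ for $i=0,1$. These two facts, together with $F_0\cap F_1=\{z\}$, are what drive the whole argument.

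The forward implication is just continuity: $\sigma$ restricts to the continuous map $\sigma_i$ on the closed set $F_i$ and $F_0\cup F_1=X$, so $\sigma$ is continuous by the pasting lemma, and continuity gives $x_n\to x\Rightarrow\sigma(x_n)\to\sigma(x)$.

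The reverse implication is the crux, and the case $x=z$ is the step I expect to be the main obstacle. Assume $\sigma(x_n)\to\sigma(x)$ and set $y=\sigma(x)$, $y_n=\sigma(x_n)$. If $x\neq z$, say $x\in F_0$, then $y=\sigma_0(x)\neq z$ because $\sigma_0$ is injective with $\sigma_0(z)=z$; hence $y$ lies in the open set $X\setminus F_1=F_0\setminus\{z\}$, so $y_n\in F_0\setminus\{z\}$ for all large $n$. For such $n$ we must have $x_n\in F_0$, since $\sigma(F_1)\subseteq F_1$ would otherwise place $y_n$ in $F_1$; then $\sigma_0(x_n)=y_n\to y=\sigma_0(x)$ and the embedding property of $\sigma_0$ yields $x_n\to x$ (convergence depending only on the tail). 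The delicate case is $x=z$, where $y=z$ and the sequence may oscillate between $F_0$ and $F_1$: here I would partition $\N$ into $A=\{n:x_n\in F_0\}$ and $B=\{n:x_n\in F_1\}$, note that along $A$ one has $\sigma_0(x_n)\to z=\sigma_0(z)$ inside the closed set $F_0$ and hence $x_n\to z$ by the embedding property of $\sigma_0$, argue symmetrically along $B$, and conclude $x_n\to z$ because a sequence covered by finitely many subsequences all converging to the same limit converges to that limit. The only real difficulty throughout is the bookkeeping that transfers convergence in $X$ into convergence inside a single $F_i$, where one of the given embeddings can be applied; closedness of the $F_i$ and the fact that they meet only at $z$ are exactly what make this transfer legitimate.
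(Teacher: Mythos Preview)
Your proof is correct and follows essentially the same approach as the paper: both verify the sequential characterization of embeddings by reducing to the individual embeddings $\sigma_i$ via a split into subsequences lying in $F_0$ and $F_1$. The only difference is organizational---you case on whether $x=z$, while the paper cases on whether $(x_n)_n$ is eventually in one $F_i$---and your explicit observation that $\sigma_0(z)=\sigma_1(z)=z$ is a point the paper uses implicitly.
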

	\begin{proof}
		Let $(x_n)_n$ be any sequence in $X$ and $x\in X$. Let us show that $x_n\to x$ iff $\sigma(x_n)\to \sigma(x)$. In case $(x_n)_n$ is eventually in $F_0$ or $F_1$, this follows from the fact that $\sigma_i$ is an embedding for $i=0,1$. Otherwise we can partition $(x_n)_n$ into two sequences $(x^0_n)_n\subseteq F_0$ and $(x^1_n)_n\subseteq F_1$. Then $x_n\to x$ iff $x^i_n\to x$ for $i=0,1$. So in particular we must have $x=z$. Hence this is in turn equivalent to $\sigma_i(x^i_n)\to \sigma(z)$ for $i=0,1$. Which is finally equivalent to $\sigma(x_n)\to \sigma(z)$.
	\end{proof}
	
	\subsection{Quasi-orders}
	
	A \emph{quasi-order} is a transitive reflexive relation $\leq_Q$ on some set $Q$. As it is customary, we henceforth make an abuse of terminology and refer to the pair $(Q,\leq_Q)$ simply as $Q$ when there is no danger of confusion.
	
	We say that a quasi-order $Q$ is \emph{well-founded} if every non-empty subset of $Q$ admits a (not necessarily unique) $\leq_Q$-minimal element. A \emph{well-quasi-order}, or \wqo{}, is a well-founded quasi-order that has no infinite \emph{antichain}, that is no infinite subset of pairwise incomparable elements.
	
	For clarity, we restrict our use of the terms \emph{embed} and \emph{embedding} to topological spaces or functions. Instead of referring to embeddings when dealing with quasi-orders, we talk about reductions. Given two quasi-orders $P$ and $Q$ respectively, a \emph{reduction} from $(P,\leq_P)$ to $(Q,\leq_Q)$ is a map $\varphi:Q\to P$ such that $p\leq_P p'\lrao\varphi(p)\leq_Q\varphi(p')$ for all $p,p'$ in $P$. We say that $P$ \emph{reduces} to $Q$ if there exists a reduction from $P$ to $Q$. When $P$ and $Q$ are topological spaces we say that $(P,\leq_P)$ \emph{continuously} (resp. \emph{Borel}) \emph{reduces} to $(Q,\leq_Q)$ when a continuous (resp. Borel) reduction exists.
	
	A subset $A$ of a Polish space $X$ is analytic (or $\ana$) if it is the projection of a closed subset of the product $\omega^\omega\times X$ of $X$ with the Baire space. A quasi-order $\leq$ on some Polish space $X$ is \emph{analytic} if it is analytic as a subset of $X\times X$. It is \emph{analytic complete} if it is analytic and moreover every analytic quasi-order on some Polish space Borel reduces to it.
	
	Notice that since the quasi-order of equality on $\N$ does not reduce to any \wqo{}, a \wqo{} is never analytic complete.
	
	We make use of the following example of analytic complete quasi-order in \cref{sec:CHAOS}.
	It is defined on (simple) graphs with vertex set $\N$. These can be viewed as pairs $G=(\N,E)$ where the edge relation $E$ is a subset of $[\N]^2$, the set of all pairs of natural numbers. Identifying any graph $G=(\N,E)$ with its edge relation $E$, we view $G$ as an element of the Polish space $2^{[\N]^2}$. This makes the set of graphs on $\N$ a Polish space and we denote it by $\mathbb{G}$.
	
	An \emph{injective homomorphism} from $G_{0}=(\N,E_{0})$ to $G_{1}=(\N,E_{1})$  is an injective map $h:\N\to \N$ such that $\{m,n\}\in E_{0}$ implies $\{h(m),h(n)\}\in E_{1}$ for all pairs $\{m,n\}$. We write $G_{0}\ihom G_{1}$ when there exists an injective homomorphism from $G_{0}$ to $G_{1}$. The proof of \cite[Thm 3.5]{louveau2005complete} also gives the following result on which \cref{sec:CHAOS} relies.
	
	\begin{theorem}[Louveau--Rosendal]\label{LouveauRosendal}
		The quasi-order $(\CCG,\ihom)$ is analytic complete.
	\end{theorem}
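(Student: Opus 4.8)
The plan is to split the statement into its two defining requirements: that $\ihom$ be analytic, and that every analytic quasi-order Borel reduce to it. The first is the easy half. For $G_0,G_1\in\CCG$ we have $G_0\ihom G_1$ exactly when there is $h\in\N^\N$ with $h$ injective and $\{m,n\}\in E_0\Rao\{h(m),h(n)\}\in E_1$ for all $m,n$; the set of triples $(G_0,G_1,h)$ satisfying these conditions is a countable intersection of clopen conditions, hence closed in $\CCG\times\CCG\times\N^\N$, so its projection $\ihom$ is analytic. For the second requirement it suffices to produce \emph{one} complete analytic quasi-order $R$ together with a Borel reduction of $R$ to $(\CCG,\ihom)$: then an arbitrary analytic quasi-order reduces to $R$ by completeness, and to $\ihom$ by transitivity of Borel reducibility, which with analyticity of $\ihom$ makes $\ihom$ complete.

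For completeness I would start from a complete analytic quasi-order of combinatorial type, namely a quasi-order $\trianglelefteq$ of structure-preserving maps between trees on $\N$ (countable trees regarded as relational structures). The substance of the Louveau--Rosendal argument is that an arbitrary analytic quasi-order $\leq_R$ on a Polish space $Z$, presented as $z\leq_R z'\lrao\exists\beta\in\cN\ (z,z',\beta)\in F$ with $F$ closed, can be coded by a Borel map $z\mapsto T_z$ so that $z\leq_R z'$ iff $T_z\trianglelefteq T_{z'}$. The delicate point is that one must do more than transcribe the projection defining $\leq_R$, which would only reproduce an analytic set: the trees $T_z$ have to be built so that reflexivity and transitivity of $\leq_R$ are mirrored by the \emph{existence and composability} of structure-preserving maps, so that $\trianglelefteq$ is genuinely a quasi-order and $z\mapsto T_z$ a reduction.

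It then remains to code trees into graphs by a continuous map $T\mapsto\Gr(T)$ with vertex set $\N$ satisfying $S\trianglelefteq T\lrao\Gr(S)\ihom\Gr(T)$. I would represent each node of the tree by a vertex, join each node to its immediate successors, and decorate each node with a \emph{rigid marker gadget} --- for instance pairwise non-isomorphic finite configurations encoding the length of the node --- designed to force any injective homomorphism $\Gr(S)\to\Gr(T)$ to send nodes to nodes of matching level while respecting the tree order, hence to restrict to a structure-preserving map $S\to T$; conversely every such tree map induces an injective homomorphism. The main obstacle is exactly the faithfulness of this last coding under \emph{injective homomorphisms} rather than embeddings: since an injective homomorphism need only preserve edges, not non-edges, and need not be onto its image, the gadgets must be engineered rigidly enough that edges can be matched only in the intended way. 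Carrying out this rigidity bookkeeping, and checking that the composite $z\mapsto\Gr(T_z)$ is Borel, is where the real work lies.
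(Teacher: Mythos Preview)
The paper does not actually give a proof of this theorem: it is stated as a citation, with the remark that the proof of \cite[Thm~3.5]{louveau2005complete} (which treats homomorphisms on countable directed graphs) also yields the analytic completeness of $(\CCG,\ihom)$. So there is no ``paper's own proof'' to compare your proposal against.

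That said, your sketch follows the right overall architecture of the Louveau--Rosendal argument: analyticity via projection of a closed set, followed by a Borel reduction from a known complete analytic quasi-order of combinatorial type (their $\leq_{\max}$ on normal trees) into the target graph quasi-order via a gadget construction. What you have written is an honest outline rather than a proof --- the technical content of \cite{louveau2005complete} lies precisely in the construction of the universal tree quasi-order and in the rigidity of the coding, neither of which you carry out --- but as a summary of the cited result it is adequate. If you want to actually write the proof rather than cite it, you would need to either reproduce the normal-tree machinery from \cite{louveau2005complete} or invoke one of the other known $\ana$-complete combinatorial quasi-orders and do the gadget reduction in full detail.
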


	\subsection{Some Cantor--Bendixson analysis}
	
	We make a simple yet quite essential use of the Cantor--Bendixson analysis throughout the article. We briefly recall here what we need. We say that a point $x$ in a space $X$ is \emph{isolated} if $\{x\}$ is open in $X$, otherwise we say that $x$ is a \emph{limit}. We let $\isolated(X)$ denote the set of isolated points of the space $X$.
	
	The \emph{Cantor--Bendixson sequence of derivatives} of a space $X$ is defined inductively by:
	\begin{itemize}
		\item $\CB_0(X)=X$,
		\item $\CB_{\alpha+1}(X)=\CB_\alpha(X)\setminus\isolated(\CB_\alpha(X))$,
		\item $\CB_\alpha(X)=\bigcap_{\beta<\alpha}\CB_\beta(X)$ in case $\alpha$ is limit.
	\end{itemize}
	
	The \emph{Cantor--Bendixson rank} of a space $X$ is the smallest ordinal $\alpha$ such that $\CB_\alpha(X)=\CB_{\alpha+1}(X)$, we denote it by $\CB(X)$. Since we are only dealing with second countable spaces, this rank will always be countable. Spaces of rank $0$ are either empty or perfect. The case of countable spaces is of particular interest for us; in that case the sequence of Cantor--Bendixson derivatives is eventually empty, and the last non-empty derivative, if there is one, is always discrete.
	
	The following results are folklore.
		\begin{fact}\label{fact:onCBrank} We say that a space $X$ is simple if $\CB(X)=\alpha+1$ for some countable ordinal $\alpha$, and $\CB_\alpha(X)$ is a singleton.
			\begin{itemize}
				\item If $\sigma:X\to Y$ is an embedding between countable spaces, then $\sigma(\CB_\alpha(X))\sub \CB_\alpha(Y)$ for all countable $\alpha$.
				\item Every countable Polish space is a topological sum of countably many simple spaces.
				\item If $X$ is compact and countable, then $\CB(X)=\alpha+1$ for some countable $\alpha$, the cardinality of $\CB_\alpha(X)$ is finite and $X$ is the topological sum of $|\CB_\alpha(X)|$ simple spaces of rank $\alpha+1$.
			\end{itemize}
		\end{fact}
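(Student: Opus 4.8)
The plan is to treat the three items in turn, resting on one preliminary observation that I would record first: since all our spaces are \zerodim{}, clopen sets form a basis, and for a clopen $W\subseteq X$ one has $\CB_\gamma(W)=\CB_\gamma(X)\cap W$ for every $\gamma$. Indeed, a clopen set neither creates nor destroys isolated points, so $\isolated(W)=\isolated(X)\cap W$, and then the successor and limit clauses of the definition give the equality by induction on $\gamma$. This lets me read off the derivatives of the clopen pieces of any partition from those of the ambient space. For the first item I would prove $\sigma(\CB_\alpha(X))\subseteq\CB_\alpha(Y)$ by transfinite induction on $\alpha$, using only that $\sigma$ is a continuous injection. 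The case $\alpha=0$ is trivial and the limit case is immediate since both sides are intersections. For the successor step, let $x\in\CB_{\alpha+1}(X)$; as $\CB_\alpha(X)$ is metrizable and $x$ is not isolated in it, there is a sequence $x_n\to x$ with $x_n\in\CB_\alpha(X)\setminus\{x\}$. Then $\sigma(x_n)\to\sigma(x)$ by continuity, $\sigma(x_n)\neq\sigma(x)$ by injectivity, and $\sigma(x_n),\sigma(x)\in\CB_\alpha(Y)$ by the induction hypothesis, so $\sigma(x)$ is a non-isolated point of $\CB_\alpha(Y)$, that is $\sigma(x)\in\CB_{\alpha+1}(Y)$.

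The second and third items both rely on one local fact that I would isolate: every point $x$ has a \emph{simple} clopen neighbourhood. If $\beta$ is the rank of $x$, i.e. $x$ is isolated in $\CB_\beta(X)$, choose a clopen $U\ni x$ with $U\cap\CB_\beta(X)=\{x\}$; by the opening observation $\CB_\beta(U)=\{x\}$ and $\CB_{\beta+1}(U)=\emptyset$, so $U$ is simple with top point $x$. For the third item I would then add the compactness hypothesis. The nonempty derivatives $\CB_\beta(X)$, $\beta<\CB(X)$, form a decreasing chain of compact sets; were $\CB(X)$ a limit ordinal, the finite intersection property would make $\CB_{\CB(X)}(X)=\bigcap_{\beta<\CB(X)}\CB_\beta(X)$ nonempty, contradicting that it equals the (empty, as $X$ is countable) perfect kernel. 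So $\CB(X)=\alpha+1$, and $\CB_\alpha(X)$ is a nonempty compact space equal to its own isolated points, hence discrete and therefore finite, say of size $k$. Separating its $k$ points by pairwise disjoint clopen sets $W_1,\dots,W_k$ and absorbing the remainder $X\setminus\bigcup_i W_i$ (a clopen set with $\CB_\alpha=\emptyset$) into $W_1$ yields a partition $X=\bigsqcup_{i=1}^k X_i$ into $k$ simple pieces of rank $\alpha+1$.

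The second item is where I expect the real difficulty, and I would flag it as the main obstacle: for a non-compact countable Polish space the rank may be a limit ordinal, so there is no finite top derivative to separate, and a naive disjointification of a cover by simple clopen sets can destroy simplicity. My plan is to argue greedily instead. Fix an enumeration $X=\{x_0,x_1,\dots\}$ and build pairwise disjoint clopen sets $V_0,V_1,\dots$: at stage $n$ let $x$ be the least-indexed point not yet covered, take a simple clopen neighbourhood $U$ of $x$ as above, and set $V_n=U\setminus(V_0\cup\dots\cup V_{n-1})$. Being a clopen subset of the simple set $U$ that still contains its top point $x$, the set $V_n$ is again simple with top point $x$ (here the opening observation is used once more). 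Since each stage covers at least its current least uncovered point, the least uncovered index strictly increases, so every point is covered after finitely many stages and $X=\bigsqcup_n V_n$ is the required decomposition into countably many simple spaces.
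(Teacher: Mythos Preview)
Your argument is correct. The paper does not actually prove this fact; it simply defers to \cite[Proposition~2.2 and Lemma~2.4]{carroy2}, so there is no in-paper proof to compare against. Your self-contained treatment---the transfinite induction for the first item, the ``simple clopen neighbourhood'' local fact, the compactness argument forcing a successor rank in the third item, and the greedy disjointification in the second item---is standard and sound. In particular, the point you flag as the main obstacle (that removing earlier $V_i$'s from a simple clopen $U$ might destroy simplicity) is handled correctly: since $V_n$ is clopen in $X$ and still contains the top point $x$ of $U$, your preliminary observation gives $\CB_\beta(V_n)=\CB_\beta(X)\cap V_n=\{x\}$, so $V_n$ remains simple of rank $\beta+1$. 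One cosmetic remark on the third item: you can avoid the ``absorb the remainder into $W_1$'' step by directly taking a clopen \emph{partition} of the compact \zerodim{} space $X$ separating the finitely many points of $\CB_\alpha(X)$, but your version is equally valid.
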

		For the proof, in a slightly more general context, see for instance \cite[Proposition 2.2 and Lemma 2.4]{carroy2}.

		The following classification of Polish spaces with exactly one limit point\footnote{Or equivalently, separable metrizable spaces with exactly one limit point.} is used in \cref{sec:CHAOS}. Here we denote by $[\omega]^2_+$ the set $[\omega]^2\cup\{\emptyset\}$ viewed as a subspace of the Cantor space.
		
		\begin{proposition}\label{prop:simpleCB2}
			Up to homeomorphism, there are only $3$ Polish spaces with a single limit point, namely: 
			\[\text{$\omega+1$, $\omega\sqcup (\omega+1)$ and $[\omega]^2_+$}.
			\]
		\end{proposition}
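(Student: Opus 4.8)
The plan is to determine the homeomorphism type of such a space $X$ from its local structure at the unique limit point $z$, the two relevant invariants being compactness and local compactness. Since $\isolated(X)=X\setminus\{z\}$ is an open discrete subspace of a separable space it is countable, and $X$ is zero-dimensional, so I fix a decreasing sequence of clopen neighborhoods $U_0=X\supseteq U_1\supseteq\cdots$ of $z$ with $\bigcap_k U_k=\{z\}$. Writing $A_k=U_k\setminus U_{k+1}$, these are clopen sets of isolated points partitioning $\isolated(X)$, and infinitely many $A_k$ are non-empty because $z$ is not isolated. The key computation is a compactness criterion: for every $k$, the set $U_k$ is compact iff $A_j$ is finite for all $j\geq k$. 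Indeed, if some $A_j$ $(j\geq k)$ is infinite then it is an infinite closed discrete subspace of $U_k$, so $U_k$ is not compact; conversely, if all $A_j$ $(j\geq k)$ are finite, any sequence of distinct isolated points in $U_k$ meets infinitely many $A_j$, and a subsequence $y_i\in A_{j_i}$ with $j_i\uparrow\infty$ satisfies $y_i\in U_m$ eventually for each $m$, hence $y_i\to z$, giving sequential compactness. As local compactness can fail only at $z$, this produces a trichotomy: $X$ is compact iff every $A_k$ is finite; $X$ is locally compact but not compact iff finitely many, but at least one, of the $A_k$ are infinite; and $X$ is not locally compact iff infinitely many $A_k$ are infinite.

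I would then treat the three cases. If every $A_k$ is finite, each $U_k$ is cofinite in $\isolated(X)$, so the open neighborhoods of $z$ are exactly the cofinite subsets containing $z$; thus $X$ is the one-point compactification of the countably infinite discrete space $\isolated(X)$, that is $X\cong\omega+1$. If exactly finitely many $A_k$ are infinite, I set $W=\{z\}\cup\bigcup\{A_k : A_k\text{ finite}\}$, which is clopen (its complement is a finite union of the clopen infinite $A_k$) and, by the criterion, compact with $z$ as unique limit point and infinitely many isolated points, whence $W\cong\omega+1$ by the previous case; its complement $X\setminus W$ is a finite union of infinite clopen discrete sets, hence a countably infinite discrete space $\cong\omega$, and $X=W\sqcup(X\setminus W)\cong(\omega+1)\sqcup\omega$.

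The remaining, and main, case is when infinitely many $A_k$ are infinite, where I claim $X\cong[\omega]^2_+$. The heart of the matter is to pass to a \emph{normal form} of the neighborhood basis in which every annulus is countably infinite: choosing $0=k_0<k_1<\cdots$ so that each block $\bigcup_{k_i\leq j<k_{i+1}}A_j$ contains an infinite $A_j$ (possible precisely because infinitely many $A_j$ are infinite) and replacing $(U_k)$ by $(U_{k_i})_i$, I obtain a decreasing clopen neighborhood basis of $z$ all of whose annuli are countably infinite. The space $[\omega]^2_+$ admits such a basis as well, with $\tilde U_k=\{\emptyset\}\cup\{s\in[\omega]^2 : \min s\geq k\}$ and annuli $\{\{k,m\} : m>k\}$, all countably infinite. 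Matching $z\mapsto\emptyset$ and choosing any bijections between the $i$-th annuli yields a bijection $\phi\colon X\to[\omega]^2_+$ that is a homeomorphism: away from $z$ both spaces are discrete, so $\phi$ is automatically continuous and open there, and at $z$ it carries the basic neighborhood $U_{k_i}$ onto $\tilde U_i$ and back, so it is bicontinuous. I expect this normal-form step — the observation that, once both spaces are non-locally-compact, the only surviving invariant is ``all annuli infinite'' and that annulus-matching bijections are then automatically homeomorphisms — to be the crux; the rest is bookkeeping.

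Finally, the three spaces are pairwise non-homeomorphic, since $\omega+1$ is compact, $\omega\sqcup(\omega+1)$ is locally compact but not compact, and $[\omega]^2_+$ is not locally compact; combined with the trichotomy this shows there are exactly three homeomorphism types. One should also record that all three are genuinely Polish zero-dimensional spaces, which for $[\omega]^2_+$ reduces to checking it is $G_\delta$ in $2^\N$ (its complement $\{s : |s|=1\}\cup\{s : |s|\geq 3\}$ is $F_\sigma$).
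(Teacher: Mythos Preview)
Your argument follows essentially the same route as the paper's: build a decreasing sequence of clopen neighborhoods of the limit point, look at the annuli, and split into three cases according to how many annuli are infinite; the paper's construction of the homeomorphism to $[\omega]^2_+$ via an explicit enumeration is exactly your annulus-matching bijection.

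There is one small but genuine slip. Requiring only $\bigcap_k U_k=\{z\}$ does \emph{not} ensure that $(U_k)_k$ is a neighborhood basis at $z$, and you use the basis property both in the compactness criterion (the step ``$y_i\in U_m$ eventually for each $m$, hence $y_i\to z$'') and in the Case~3 homeomorphism. For a concrete failure, take $X=\omega\sqcup(\omega+1)$ with isolated points $y_n$ in the $\omega$-summand and $x_n\to z$ in the $\omega+1$-summand, and set $U_k=\{z\}\cup\{x_n:n\geq k\}\cup\{y_n:n\geq k\}$; these are clopen with $\bigcap_k U_k=\{z\}$, yet every $A_k=\{x_k,y_k\}$ is finite, so your trichotomy would declare $X$ compact. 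The fix is immediate: choose the $U_k$ to form a neighborhood basis (equivalently, take clopen balls of vanishing diameter, as the paper does); then $\bigcap_k U_k=\{z\}$ holds automatically and the rest of your proof goes through verbatim.
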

		Notice that $\omega+1$ is compact and $\omega\sqcup (\omega+1)$ is locally compact, while $[\omega]^2_+$ is not locally compact.
		\begin{proof}
			Let $X$ be Polish with single limit point $x_\infty$ and fix a compatible metric for $X$.
			Take a sequence $(U_n)_{n\in \N}$ of open balls centered at $x_\infty$ with vanishing diameter. Notice that for all $n$, $X\setminus U_n$ is a countable set of isolated points of $X$, hence it is open. Therefore each $U_n$ is actually clopen in $X$ and so are the sets $V_n=U_n\setminus U_{n+1}$. We distinguish three cases.
			
			Suppose first that $V_n$ is finite for all $n\in\N$ and so $X$ is compact. Then any bijective map $h:X\to \omega+1$ which maps $x_\infty$ to $\omega$ is a homeomorphism.
			
			Next assume that there is an $N\in\N$ such that $V_n$ is finite for all $n\geq N$ while $X\setminus U_N$ is infinite. Now $X\setminus U_N$ is clopen in $X$ and as an infinite discrete space it is homeomorphic to $\omega$. Since $U_N$ falls in the first case, $X$ is homeomorphic to $\omega\sqcup(\omega+1)$.
			
			Otherwise, by possibly going to a subsequence of neighborhoods $(U_n)_{n\in\N}$, we can assume that $V_n$ is infinite for all $n$. Fix an enumeration $(x_{m,n})_{n\in\N}$ of $V_m$ for all $m$, and define $h:X\to [\omega]^2_+$ by $\tau(x_{m,n})=\{m,m+n\}$ and $h(x_\infty)=\emptyset$. One easily checks that $h$ is a homeomorphism, as desired. 
		\end{proof}
		
		\section{Embedding functions}\label{sec:EMbedding}
		
\subsection{Let's embed them!}		
		
		The following ordering on functions is the main object of study of this article.
		\begin{definition}
			Let $f:X\rao Y$ and $f':X'\rao Y'$ be two functions between topological spaces. We say that $f$ \emph{embeds} in $g$, in symbols $f\emb g$, if there exist embeddings $\sigma:X\rao X'$ and $\tau:\im f\rao \im g$ such that $\tau\circ f=f'\circ\sigma$. In this case we also say that $(\sigma,\tau)$ \emph{embeds} $f$ in $g$.
		\end{definition}

		We start by making a simple but important observation on this definition. If $(\sigma,\tau)$ embeds $f$ into $g$, then the embedding $\tau:\im f \to \im g$ is entirely determined by $\sigma$, $f$ and $g$ through the equation $\tau(f(x))=g(\sigma(x))$ for all $x\in X$. In particular, if $(\sigma,\tau)$ and $(\sigma,\tau')$ both embed $f$ into $g$, then $\tau=\tau'$. With this in mind, we sometimes just say that $\sigma$ \emph{embeds} $f$ into $g$ if $\sigma$ is an embedding and there exists a (unique) embedding $\tau:\im f \to \im g$ such that $\tau\circ f=g\circ \sigma$.
		
		Observe that in order for an embedding $\sigma:X\to X'$ to embed $f$ into $g$ it is necessary that $\sigma$ reduces the equivalence relation induced by $f$ to that induced by $g$, namely that
		\[
		\forall x_0,x_1 \in X \quad \big( f(x_0)=f(x_1) \leftrightarrow g(\sigma(x_0))=g(\sigma(x_1)) \big).
		\]
		In which case, the assignment $f(x)\mapsto g(\sigma(x))$ actually defines an injective function $\tau:\im f\to \im g$. Hence for $\sigma$ to embed $f$ into $g$ it is necessary and sufficient that moreover this $\tau$ is an embedding.

		Now given Polish spaces $X$ and $Y$, we endow the space $\C(X,Y)$ of continuous functions from $X$ to $Y$ with the topology of compact convergence. This topology coincides in this case with the compact-open topology (see \cite[Theorem 46.8, p. 285]{munkres2000topology}) which is defined by taking as a subbase sets of the form
		\[
		S_{X,Y}(C,U)=\{f:X\to Y\mid f(C)\sub U\}
		\]
		for some compact subset $C$ of $X$ and some open subset $U$ of $Y$. We recall that if $X$ is Hausdorff and $\mathcal{S}$ is a subbase for the topology of $Y$, then the sets of the form $S_{X,Y}(C,U)$ where $U$ belongs to $\mathcal{S}$ already form a subbase for the compact open topology of $\C(X,Y)$ \cite[Lemma 2.1]{jackson1952spaces}.
		
		When $X$ is locally compact and $Y$ is Polish, then $\C(X,Y)$ is Polish (see \cite[Theorem 1, p.93 and Theorem 3, p. 94]{kuratowski1968topology}), while in the non-locally compact case this topology is not even metrizable. We address the existence of a natural topology on these spaces at the end of this section.

		When moreover $X$ is compact, then the topology on $\C(X,Y)$ coincides with the uniform convergence topology induced by the uniform metric
		\[
		d_u(f,g)=\sup_{x\in X} d_Y(f(x),g(x)),
		\]
		where $d_Y$ is some compatible metric for $Y$. In this case, $\C(X,Y)$ is Polish \cite[(4.19)]{kechris2012classical} and we show that the quasi-order of embeddability is analytic. Notice that our general assumption that the spaces are \zerodim{} is actually unnecessary for this result.
		
		\begin{theorem}\label{EmbedIsAnalytic}
			Let $X$ and $Y$ be Polish with $X$ compact. Then embeddability is an analytic quasi-order on $\C(X,Y)$.
		\end{theorem}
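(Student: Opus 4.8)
The plan is to exhibit $\emb$ as the projection, forgetting one coordinate, of a Borel subset of $\C(X,Y)\times\C(X,Y)\times\C(X,X)$; since projections of Borel sets are analytic this shows $\emb$ is analytic, and as $\emb$ is reflexive and transitive in general this yields that it is an analytic quasi-order. Recall that because $X$ is compact, all three function spaces $\C(X,Y)$ and $\C(X,X)$ are Polish, so the quantifier ``there exists an embedding $\sigma$'' will range over the Polish space $\C(X,X)$.

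The first step is to reduce the defining condition to one that does not mention $\tau$. Since $X$ is compact, an embedding $\sigma\colon X\to X$ is the same as a continuous injection, a continuous injection from a compact space into a Hausdorff space being a homeomorphism onto its image. I claim that, for $f,g\in\C(X,Y)$, one has $f\emb g$ if and only if there is $\sigma\in\C(X,X)$ that is injective and satisfies
\[
\forall x_0,x_1\in X\quad\big(f(x_0)=f(x_1)\ \leftrightarrow\ g(\sigma(x_0))=g(\sigma(x_1))\big).
\]
Necessity of these two conditions was already observed before the statement. For sufficiency, given such a $\sigma$, the assignment $\tau(f(x))=g(\sigma(x))$ is a well defined injection $\im f\to\im g$ by the displayed condition; as $f\colon X\to\im f$ is a continuous surjection from a compact space onto a Hausdorff space it is a quotient map, so $\tau$ is continuous because $\tau\circ f=g\circ\sigma$ is; and since $\im f$ is compact and $Y$ is Hausdorff, this continuous injection $\tau$ is automatically an embedding. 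Hence $(\sigma,\tau)$ embeds $f$ into $g$. Compactness of $X$ is used here precisely to produce the embedding $\tau$ for free.

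It then remains to check that the set $R$ of triples $(f,g,\sigma)$ for which $\sigma$ is injective and the displayed reduction condition holds is Borel. Here I would use that, $X$ being compact, the evaluation maps $\C(X,Y)\times X\to Y$ and $\C(X,X)\times X\to X$ are continuous for the compact-open topology, so that $(f,x)\mapsto f(x)$, $(\sigma,x)\mapsto\sigma(x)$ and their composites are continuous; together with the Hausdorffness of $Y$ this makes $x_0=x_1$, $f(x_0)=f(x_1)$ and $g(\sigma(x_0))=g(\sigma(x_1))$ closed conditions in the relevant variables. Both defining conditions then have the shape $\forall (x_0,x_1)\in X^2\ \psi$ with $\psi$ a Boolean combination of these closed conditions, whose negation $\neg\psi$ defines an $F_\sigma$ set, using that open subsets of the metrizable space $X^2$ are $F_\sigma$. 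Because $X^2$ is compact, the projection forgetting this factor sends closed sets to closed sets, hence $F_\sigma$ sets to $F_\sigma$ sets; so each negated condition $\exists(x_0,x_1)\ \neg\psi$ is $F_\sigma$, each condition is $G_\delta$, their conjunction $R$ is $G_\delta$, and $\proj(R)=\{(f,g):\exists\sigma\ (f,g,\sigma)\in R\}=\emb$ is analytic.

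The only place requiring genuine care, and the main obstacle, is controlling the complexity of the two universal quantifiers over the domain. Without compactness of $X$ a universal quantifier over $X^2$ of a Borel condition would merely be coanalytic, and the subsequent existential quantifier over $\sigma$ would push $\emb$ up to the second projective level rather than keeping it analytic. Thus compactness of $X$ is exploited twice: once to obtain $\tau$ automatically as an embedding, and once, via the closed-projection (tube) lemma combined with the fact that open sets in $X^2$ are $F_\sigma$, to keep the universally quantified conditions Borel.
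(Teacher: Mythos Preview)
Your proof is correct and actually a bit cleaner than the paper's. Both arguments eliminate the witness $\tau$ and then show that the remaining condition on $(f,g,\sigma)$ is Borel, but the mechanics differ. The paper fixes a countable dense set $D\subseteq X$ and complete metrics, and writes out explicit $\varepsilon$--$\delta$ formulas (quantifying only over $D$ and $\N$) expressing that $\sigma$ is an embedding and that the induced map $f(d)\mapsto g(\sigma(d))$ is uniformly continuous with uniformly continuous inverse on $f(D)$; this yields ``Borel'' without a sharp pointclass. You instead use compactness twice in a more structural way: once to manufacture $\tau$ automatically via the quotient-map property of $f\colon X\to\im f$, and once via the tube lemma to turn the universal quantifier over $X^2$ into a closed projection, giving the sharper conclusion that $R$ is $G_\delta$.

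One small wording fix: when you say ``open subsets of the metrizable space $X^2$ are $F_\sigma$'', the open sets you are actually using (e.g.\ $\neg C_4$, the complement of $g(\sigma(x_0))=g(\sigma(x_1))$) live in the full product $\C(X,Y)^2\times\C(X,X)\times X^2$, not just in $X^2$. This product is Polish, hence metrizable, so the same $F_\sigma$ fact applies there and your argument goes through unchanged; just adjust the sentence accordingly.
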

		
        \begin{proof}
We show that the relation $E$ on $\C(X,X)\times \C(X,Y)^2$ given by
\begin{align*}
(\sigma,f,g)\in E \IFF \text{$\sigma$ embeds $f$ into $g$}
\end{align*}
is Borel. Since $f\emb g$ iff $\exists \sigma \in \C(X,X) \ (\sigma,f,g)\in E$, this implies that the relation $f\emb g$ is analytic, as desired.

Fix a countable dense subset $D=\{d_n:n  \in \omega\}$ of $X$ and compatible complete metrics $d_X$ and $d_Y$ on $X$ and $Y$, respectively.

First we observe that for every $x,x' \in X$ and $k>0$ the set $D_{x,x'}^k=\{f \in \C(X,Y):d_Y(f(x),f(x'))\leq \frac{1}{k}\}$ is closed in $\C(X,Y)$. To see this let $f\in \C(X,Y)$ with $d_Y(f(x),f(x'))\geq \frac{1}{k}+\epsilon$ for some $\epsilon>0$. Then for every $g\in \C(X,Y)$ with $d_u(f,g)<\frac{\epsilon}{2}$ we have $d_Y(g(x),g(x'))\geq d_Y(f(x),f(x'))- 2\cdot d_u(f,g)> \frac{1}{k}$. Hence the complement of $D_{x,x'}^k$ is open, as desired. 

Next we prove that for all $\sigma \in \C(X,X)$, $\sigma$ is an embedding if and only if
\[
 \forall k>0 \ \exists l>0 \ \forall m,n \ \bigl( d_Y(\sigma(d_n),\sigma(d_m))\leq \frac{1}{l} \text{ implies } d_X(d_n,d_m)\leq \frac{1}{k} \bigr),
\]
which shows that the set of embeddings is Borel in $\C(X,X)$. Suppose that $\sigma\in \C(X,X)$ is an embedding and therefore admits a continuous inverse $\sigma^{-1}: \im \sigma \to X$. Since $X$ is compact, so is $\im \sigma$, hence $\sigma^{-1}$ is uniformly continuous. This shows the forward implication. Conversely, assume that $\sigma\in \C(X,X)$ satisfies the above formula. By compactness of $X$, it suffices to show that $\sigma$ is injective. To this end assume that $\sigma(x)=\sigma(x')$ and pick two sequences in $D$ with $b_n\to x$ and $c_m \to x'$. Since both $(\sigma(b_n))_n$ and $(\sigma(c_m))_m$ converge to $\sigma(x)$ it follows that the sequence $(\sigma(b_0),\sigma(c_0),\sigma(b_1),\sigma(c_1),\ldots)$ is Cauchy. As $\sigma$ satisfies above formula, it follows that $(d_0,c_0,d_1,c_1,\ldots)$ is Cauchy too, which implies that $x=x'$.

Finally we claim that $(\sigma,f,g)\in E$ iff $\sigma$ is an embedding,
\begin{align*}
\forall k>0 \ \exists l>0 \ \forall n,m \ \bigl( d_Y(f(d_n),f(d_m))\leq \frac{1}{l} \to d_Y(g\circ\sigma(d_n),g\circ\sigma(d_m))\leq \frac{1}{k}\bigr)
\intertext{and}
\forall k>0 \ \exists l>0 \ \forall n,m \ \bigl(d_Y(g\circ\sigma(d_n),g\circ\sigma(d_m))\leq \frac{1}{l} \to d_Y(f(d_n),f(d_m))\leq \frac{1}{k}\bigr).
\end{align*}
This shows that $E$ is Borel. Suppose first that $(\sigma,f,g)\in E$. Then $\sigma$ is an embedding and we have a unique embedding $\tau:\im f \to \im g\circ \sigma$ such that $(\sigma,\tau)$ embeds $f$ into $g$. 
Since $\im f$ and $\im g\circ \sigma$ are compact, both $\tau$ and its inverse are uniformly continuous. Moreover as $\tau(f(x))=g\circ \sigma(x)$ for all $x\in X$, it follows that the two above formulas are satisfied. 

Conversely suppose that $\sigma:X\to X$ is an embedding such that the two above formulas hold with respect to some $f$ and $g$ in $\C(X,Y)$. Notice that in particular, we have $f(d_n)=f(d_m)$ iff $g\circ \sigma(d_n)=g\circ \sigma(d_m)$ for every $m,n\in \omega$. We therefore have a well defined bijective map $\tilde{\tau}:f(D)\to g\circ \sigma(D)$ given by $\tilde{\tau}(f(d))=g\circ\sigma(d)$.  Since $f(D)$ is dense in $\im f$ and $\tilde{\tau}$ is uniformly continuous, it extends to a continuous map $\tau:\im f \to \im g\circ \sigma$. Moreover as before this continuous extension is injective and so $\tau$ is an embedding.  Finally, since $\tau\circ f$ and $g\circ \sigma$ are both continuous and coincide on $D$, they are equal on $X$. Therefore $\tau$ is an embedding such that $(\sigma,\tau)$ embeds $f$ in $g$, as desired.
\end{proof}

		\begin{remark}
			When $X$ and $Y$ are Polish and $X$ is locally compact, the space $\C(X,Y)$ is Polish. One might therefore be tempted to think that the embeddability relation is analytic for such spaces as well. However, if we only assume that $X$ is locally compact in \Cref{EmbedIsAnalytic} then the conclusion does not hold. There exist a locally compact Polish space $X$ for which embeddability is not analytic on $\C(X,X)$ (see \cref{section:last}).
		\end{remark}
		
		We end this subsection by collecting a few basic facts for later use.
		Observe that any embedding $j:Y\to Z$ induces a map $j^\circ:\C(X,Y)\to \C(X,Z)$ given by $j^\circ(f)=j\circ f$. It should come as no surprise that this map actually is a continuous reduction with with respect to embeddability on functions. 
		
		\begin{proposition}\label{cor:embeddingrangesideY}
			For all spaces $X,Y,Z$, if $Y$ embeds in $Z$ then $(\C(X,Y),\emb)$ reduces continuously to $(\C(X,Z),\emb)$.
		\end{proposition}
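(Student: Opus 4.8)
The plan is to show that the induced map $j^\circ:\C(X,Y)\to\C(X,Z)$, $f\mapsto j\circ f$, is both continuous and a reduction for embeddability. First I would verify \emph{continuity}. Since $j$ is an embedding, it is in particular continuous, so for any $f\in\C(X,Y)$ the composite $j\circ f$ lies in $\C(X,Z)$ and the map is well-defined. To check continuity with respect to the compact-open topologies, it suffices to check that preimages of subbasic sets are open: for a compact $C\subseteq X$ and open $U\subseteq Z$, one computes $(j^\circ)^{-1}\bigl(S_{X,Z}(C,U)\bigr)=\{f:j\circ f(C)\subseteq U\}=\{f:f(C)\subseteq j^{-1}(U)\}=S_{X,Y}(C,j^{-1}(U))$, which is subbasic in $\C(X,Y)$ since $j^{-1}(U)$ is open by continuity of $j$. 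Hence $j^\circ$ is continuous.

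Next I would prove that $j^\circ$ is a \emph{reduction}, i.e. that $f\emb f'\iff j\circ f\emb j\circ f'$ for all $f,f'\in\C(X,Y)$. The forward direction is the easy one: if $(\sigma,\tau)$ embeds $f$ into $f'$, with $\sigma:X\to X$ and $\tau:\im f\to\im f'$ embeddings satisfying $\tau\circ f=f'\circ\sigma$, then I would take $\tilde\sigma=\sigma$ and $\tilde\tau=j\circ\tau\circ(j\rest{\im f})^{-1}:\im(j\circ f)\to\im(j\circ f')$. Here $\im(j\circ f)=j(\im f)$, and since $j$ restricted to $\im f$ is a homeomorphism onto its image, $(j\rest{\im f})^{-1}$ makes sense and is an embedding; composing embeddings, $\tilde\tau$ is an embedding, and one checks $\tilde\tau\circ(j\circ f)=j\circ\tau\circ f=j\circ f'\circ\sigma=(j\circ f')\circ\tilde\sigma$, so $(\tilde\sigma,\tilde\tau)$ embeds $j\circ f$ into $j\circ f'$.

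The converse is the direction I expect to be the \emph{main obstacle}, though it turns out to be clean because $j$ is injective. Suppose $(\sigma,\rho)$ embeds $j\circ f$ into $j\circ f'$, so $\sigma:X\to X$ is an embedding and $\rho:\im(j\circ f)\to\im(j\circ f')$ is an embedding with $\rho\circ(j\circ f)=(j\circ f')\circ\sigma$. I keep the same $\sigma$ and must recover an embedding $\tau:\im f\to\im f'$ with $\tau\circ f=f'\circ\sigma$. The natural candidate is $\tau=(j\rest{\im f'})^{-1}\circ\rho\circ(j\rest{\im f})$. Since $j$ is injective, $j\rest{\im f}:\im f\to\im(j\circ f)$ and $j\rest{\im f'}:\im f'\to\im(j\circ f')$ are both homeomorphisms onto their images, so their (inverses and) restrictions are embeddings; hence $\tau$, being a composite of embeddings, is an embedding. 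Finally, applying $j$ to the two sides of the desired identity and using injectivity of $j$, the relation $\rho\circ j\circ f=j\circ f'\circ\sigma$ yields $j\circ\tau\circ f=\rho\circ j\circ f=j\circ f'\circ\sigma$, and cancelling the injective $j$ gives $\tau\circ f=f'\circ\sigma$. Thus $(\sigma,\tau)$ embeds $f$ into $f'$, completing the equivalence and the proof.
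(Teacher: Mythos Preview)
Your proof is correct and follows essentially the same approach as the paper: define $j^\circ(f)=j\circ f$, establish continuity for the compact-open topology, and use that $j$ is an embedding to transfer witnesses $(\sigma,\tau)$ back and forth between $f\emb f'$ and $j\circ f\emb j\circ f'$. The only cosmetic differences are that the paper cites a reference for continuity while you compute preimages of subbasic sets directly, and the paper phrases the reduction step via the observation that $\sigma$ alone determines $\tau$ (checking the equivalence-relation condition and the sequence characterization of embeddings), whereas you write down the conjugate $\tau=(j\rest{\im f'})^{-1}\circ\rho\circ(j\rest{\im f})$ explicitly; these are the same argument in slightly different dress.
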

		\begin{proof}
			Let $j: Y\to Z$ be an embedding. Then the induced map $j^\circ:\C(X,Y) \to \C(X,Z)$, given by $j^\circ(f)=j\circ f$ is continuous by \cite[Theorems 2.2.4.b, p. 18]{mccoy2006topological}.
			Now for any embedding $\sigma:X\to X$ we show that $\sigma$ embeds $f$ into $g$ iff $\sigma$ embeds $j^\circ(f)$ into $j^\circ(g)$. Since $j$ is an embedding, we have
			\[
			\forall x_0,x_1\in X \quad f(x_0)=f(x_1) \IFF g(\sigma(x_0))=g(\sigma(x_1))
			\]
			if and only if
			\[
			\forall x_0,x_1\in X \quad j^\circ( f)(x_0)=j^\circ(f)(x_1) \IFF j^\circ(g)(\sigma(x_0))=j^\circ(g)(\sigma(x_1)).
			\]
			Therefore an embedding $\sigma:X\to X$ induces a injective function $\tau:\im f \to \im g$ such that $\tau\circ f=g\circ \sigma$ iff it induces an injective function $\tau':\im j^\circ(f)\to \im j^\circ(g)$ with $\tau'\circ j^\circ(f)=j^\circ(g)\circ \sigma$. Now notice that $\tau$ is an embedding just in case for all sequences $(x_n)_n$ in $X$ and all $x\in X$, $f(x_n)\to f(x)$ iff $g(\sigma(x_n))\to g(\sigma(x))$.
			Since $j$ is an embedding, we see that $\tau$ is an embedding exactly when so is $\tau'$. Therefore $f\emb g$ iff $j^\circ(f)\emb j^\circ(g)$ and so $j^\circ$ is a reduction as desired.
		\end{proof}

		In \cref{sec:CHAOS}, we will make use of functions which are almost embeddings while they lack injectivity.
		Whenever a function $\rho:X\to Y$ is not injective, we can pick distinct points $x$ and $x'$ in $X$ with $\rho(x)=\rho(x')$ and therefore the sequence $(x,x',x,x',\cdots)$ does not converge while $(\rho(x),\rho(x'),\rho(x),\rho(x'),\cdots)$ is actually constant. 
		With this very simple observation in mind, we make the following definition.
		
		\begin{definition}
			Let $\rho: X\to X'$ be a function. We say that a sequence $(x_n)_n$ in $X$ is \emph{$\rho$-trivial} if the corresponding sequence $\big(f(x_n)\big)_n$ of images by $f$ is ultimately constant. We say that $\rho$ is a \emph{pseudo-embedding} if $\rho$ is continuous and 
			and for all not $\rho$-trivial sequence $(x_n)_n$ and all $x\in X$ we have
			\[
			x_n\to x \quad\text{if and only if}\quad \rho(x_n)\to\rho(x).
			\]
		\end{definition}
		
		Notice that an injective pseudo-embedding is just an embedding. One reason for considering this weakening of the notion of embedding is that the relation of embeddability simplifies a little when restricted to such functions.
		
		\begin{proposition}\label{EmbSigmaQuasiEmb}
			Let $f:X\to Y$ and $g:X'\to Y'$ be two pseudo-embeddings. Then $f\emb g$ iff there exists an embedding $\sigma:X\to X'$ such that $f(x)=f(x')$ iff $g(\sigma(x))=g(\sigma(x'))$ for all $x,x'\in X$.
		\end{proposition}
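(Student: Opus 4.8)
The plan is to prove the two implications separately, the forward one being essentially immediate and the backward one carrying the content. For the forward direction, suppose $f\emb g$ witnessed by embeddings $\sigma:X\to X'$ and $\tau:\im f\to \im g$ with $\tau\circ f=g\circ\sigma$. Since $\tau$ is in particular injective, for all $x,x'\in X$ we have $f(x)=f(x')$ iff $\tau(f(x))=\tau(f(x'))$ iff $g(\sigma(x))=g(\sigma(x'))$, so $\sigma$ is exactly the embedding required by the statement.

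For the backward direction, I would start from an embedding $\sigma:X\to X'$ satisfying $f(x)=f(x')\leftrightarrow g(\sigma(x))=g(\sigma(x'))$ and recall, as observed just before the definition of pseudo-embedding, that this condition guarantees that the assignment $f(x)\mapsto g(\sigma(x))$ defines a well-defined injective map $\tau:\im f\to \im g$ with $\tau\circ f=g\circ\sigma$. It then remains only to check that this $\tau$ is a topological embedding, for then $(\sigma,\tau)$ witnesses $f\emb g$. Since all spaces are metrizable, I would verify the sequential characterization of embeddings: for every sequence $(y_n)_n$ in $\im f$ and every $y\in \im f$, $y_n\to y$ iff $\tau(y_n)\to \tau(y)$. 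Choosing preimages $x_n,x\in X$ with $f(x_n)=y_n$ and $f(x)=y$, this amounts to comparing $f(x_n)\to f(x)$ with $g(\sigma(x_n))\to g(\sigma(x))$.

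The argument then splits according to whether the sequence $(y_n)_n$ --- equivalently, whether $(x_n)_n$ is $f$-trivial --- is ultimately constant. The key maneuver in the main (non-trivial) case is to chain three equivalences: because $(x_n)_n$ is not $f$-trivial and $f$ is a pseudo-embedding, $f(x_n)\to f(x)$ iff $x_n\to x$; because $\sigma$ is an embedding, this holds iff $\sigma(x_n)\to \sigma(x)$; and finally, noting that the defining equivalence forces $(\sigma(x_n))_n$ to be non-$g$-trivial precisely because $(x_n)_n$ is non-$f$-trivial, the pseudo-embedding $g$ gives $\sigma(x_n)\to \sigma(x)$ iff $g(\sigma(x_n))\to g(\sigma(x))$. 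Concatenating yields the desired equivalence. The delicate point --- and the step I expect to be the main obstacle --- is the $f$-trivial case: when $(y_n)_n$ is ultimately equal to some $c$, the pseudo-embedding hypotheses say nothing, so I would instead argue directly that $(\tau(y_n))_n$ is then ultimately equal to $\tau(c)$ (using that the defining equivalence propagates ultimate constancy through $\tau$), whence both $y_n\to y$ and $\tau(y_n)\to \tau(y)$ reduce, via Hausdorffness and the injectivity of $\tau$, to the single condition $y=c$. This handles the degenerate sequences that the pseudo-embedding definition deliberately excludes, and completes the verification that $\tau$ is an embedding.
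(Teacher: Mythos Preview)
Your proposal is correct and follows essentially the same route as the paper: dismiss the forward direction as immediate, define $\tau$ from the fiber-preservation hypothesis, and verify that $\tau$ is an embedding by the sequential criterion, splitting into the $f$-trivial and non-$f$-trivial cases and using in the latter the chain $f(x_n)\to f(x)\Leftrightarrow x_n\to x\Leftrightarrow \sigma(x_n)\to\sigma(x)\Leftrightarrow g(\sigma(x_n))\to g(\sigma(x))$. Your treatment of the trivial case is a touch more explicit than the paper's (which simply notes that both image sequences are eventually constant ``and we are done''), but the argument is the same.
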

		\begin{proof}
			Clearly this is necessary, so let us prove this actually is sufficient. Let $\sigma:X\to X$ be an embedding such that $f(x)=f(x')$ iff $g(\sigma(x))=g(\sigma(x'))$ for all $x,x'\in X$. This ensures that the assignment $f(x)\mapsto g(\sigma(x))$ defines an injective map $\tau:\im f \to \im g$. To see that this $\tau$ is an embedding, take some sequence $(x_n)_n$ in $X$ and let $x\in X$ with the aim to show that
			\[
			\text{$f(x_n)\to f(x)$ in $Y$} \IFF \text{$g(\sigma(x_n))\to g(\sigma(x))$ in $X$}.
			\]
			Notice $(x_n)_n$ is $f$-trivial iff $(\sigma(x_n))_n$ is $g$-trivial.
			If $(x_n)_n$ is $f$-trivial and so $(\sigma(x_n))_n$ is $g$-trivial too, then the sequences on both sides are eventually constant and we are done.
			So suppose that $(x_n)_n$ is not $f$-trivial and so $(\sigma(x_n))_n$ is not $g$-trivial either. Since $f$ and $g$ are pseudo-embeddings, we see that each side of the equivalence hold iff $x_n\to x$, which finishes the proof.
		\end{proof}
		
		If $\rho:X\to Y$ is a pseudo-embedding and $\tau:Y\to Y$ is an embedding, then there is an easy sufficient (and in fact necessary) condition for the existence of an embedding $\sigma:X\to X$ such that $(\sigma,\tau)$  is a witness for $\rho\emb \rho$.
		
		\begin{proposition}\label{prop:PseudoEmbLiftingEmb}
			Let $\rho:X\to Y$ be a pseudo-embedding. If $\tau:Y\to Y$ is an embedding such that $\rho^{-1}(y)$ embeds into $\rho^{-1}(\tau(y))$ for every $y\in Y$, then there exists an embedding $\sigma:X\to X$ such that $\rho\circ \sigma=\tau\circ \rho$.
		\end{proposition}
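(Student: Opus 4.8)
The plan is to build $\sigma$ fiber by fiber and then let the pseudo-embedding structure of $\rho$ handle the gluing. Using the axiom of choice, for each $y\in Y$ fix an embedding $\sigma_y\colon \rho^{-1}(y)\to \rho^{-1}(\tau(y))$, which exists by hypothesis, and define $\sigma\colon X\to X$ by $\sigma(x)=\sigma_{\rho(x)}(x)$. By construction $\sigma(x)$ lies in $\rho^{-1}(\tau(\rho(x)))$, so $\rho(\sigma(x))=\tau(\rho(x))$ for every $x$, that is $\rho\circ\sigma=\tau\circ\rho$ as required. Moreover $\sigma$ is injective: each $\sigma_y$ is injective, and since $\tau$ is injective the images $\rho^{-1}(\tau(y))$ of distinct fibers are pairwise disjoint, so points coming from different fibers cannot be identified.

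It remains to show that $\sigma$ is an embedding, i.e. that $x_n\to x$ if and only if $\sigma(x_n)\to\sigma(x)$ for every sequence $(x_n)_n$ and every $x\in X$. The crucial preliminary remark is that, because $\rho(\sigma(x_n))=\tau(\rho(x_n))$ and $\tau$ is injective, the sequence $(x_n)_n$ is $\rho$-trivial if and only if $(\sigma(x_n))_n$ is $\rho$-trivial. I would then split into two cases. When the sequences are \emph{not} $\rho$-trivial, the pseudo-embedding property of $\rho$ applies verbatim on both sides: $x_n\to x$ iff $\rho(x_n)\to\rho(x)$, and $\sigma(x_n)\to\sigma(x)$ iff $\rho(\sigma(x_n))\to\rho(\sigma(x))$, that is iff $\tau(\rho(x_n))\to\tau(\rho(x))$. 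Since $\tau$ is an embedding, the latter is equivalent to $\rho(x_n)\to\rho(x)$, and chaining the two equivalences gives exactly $x_n\to x$ iff $\sigma(x_n)\to\sigma(x)$.

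The delicate case, which I expect to be the main obstacle, is that of $\rho$-trivial sequences, where the defining property of a pseudo-embedding gives no information. Here the idea is to fall back on the fiberwise embeddings. If $(x_n)_n$ is $\rho$-trivial, then $\rho(x_n)$ is eventually equal to some fixed $c\in Y$; whenever either $x_n\to x$ or $\sigma(x_n)\to\sigma(x)$ holds, continuity of $\rho$ (respectively continuity of $\rho$ together with the fact that $\tau$ is an embedding) forces $\rho(x)=c$, so both $x$ and a tail of $(x_n)_n$ lie in the single fiber $\rho^{-1}(c)$, and likewise $\sigma(x)$ and a tail of $(\sigma(x_n))_n$ lie in $\rho^{-1}(\tau(c))$. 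For a sequence contained in a subspace and converging to a point of that subspace, convergence in $X$ coincides with convergence in the subspace; since $\sigma$ restricts to the embedding $\sigma_c$ on $\rho^{-1}(c)$, the equivalence $x_n\to x$ iff $\sigma(x_n)\to\sigma(x)$ follows from $\sigma_c$ being an embedding, after discarding the finitely many initial terms that are not yet in the fiber. Combining the two cases shows that $\sigma$ is an embedding and completes the proof.
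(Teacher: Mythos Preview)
Your proof is correct and follows essentially the same approach as the paper's: build $\sigma$ fiberwise from the $\sigma_y$'s, note that $\rho$-triviality is preserved, and split into the trivial and non-trivial cases, handling the latter via the pseudo-embedding property of $\rho$ together with $\tau$ being an embedding. Your treatment of the $\rho$-trivial case is in fact slightly more careful than the paper's, since you explicitly argue that if either convergence holds then $\rho(x)=c$, whereas the paper simply asserts that the claim follows from $\sigma_y$ being an embedding without isolating this point.
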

		\begin{proof}
			Pick for every $y\in \im \rho$ an embedding $\sigma_y:\rho^{-1}(y)\to \rho^{-1}(\tau(y))$. We show that $\sigma=\bigcup_{y\in Y}\sigma_y$ is an embedding such that $(\sigma,\tau)$ embeds $\rho$ in itself. Clearly we have $\rho\circ \sigma=\tau\circ \rho$. To see that $\sigma$ is an embedding, let $(x_n)_n$ be a sequence in $X$ and $x\in X$. We claim that $x_n\to x$ iff $\sigma(x_n)\to\sigma(x)$. Observe that $(x_n)_n$ is $\rho$-trivial exactly when $(\sigma(x_n))_n$ is. If $(x_n)_n$ is  $\rho$-trivial with $\big(\rho(x_n)\big)_n$ eventually equal to $y$, then the claim follows from the fact that $\sigma_y$ is an embedding. Otherwise, since $\rho$ is a pseudo-embedding $x_n\to x$ iff $\rho(x_n)\to \rho(x)$, so as $\tau$ is an embedding this is equivalent to $\tau(\rho(x_n))\to \tau(\rho(x))$. This is in turn equivalent to $\rho\circ \sigma(x_n)\to\rho\circ \sigma(x)$, hence using that $\rho$ is a pseudo-embedding once again we get that this is finally equivalent to $\sigma(x_n)\to\sigma(x)$.
		\end{proof}
		
		\subsection{A limitation result on possible topologies for continuous functions}\label{subsec:limitationTopologies}
		
		A natural question arises when we consider spaces of the form $C(X,Y)$: does there exist a nice topology on these spaces? More precisely does there exist a standard Borel structure on $C(X,Y)$ so that the \emph{evaluation map} $C(X,Y) \times X \to Y$ defined by $(f,x) \mapsto f(x)$ is Borel?
		
		We first observe that the answer is affirmative when $X$ is $\sigma$-compact. Let $X=\bigcup_{n \in \omega} K_n$ with compact sets $K_n$ and identify $C(X,Y)$ with a subset of the Polish space $\prod_{n \in \omega} C(K_n,Y)$ via the injection $f\mapsto(f\rest{K_n})_n$. It is enough to notice the following fact.
        
\begin{proposition}
The space $\C(X,Y)$ is Borel in $\prod_{n \in \omega} C(K_n,Y)$.
\end{proposition}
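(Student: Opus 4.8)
The plan is to identify the image of $\C(X,Y)$ under the injection $f\mapsto(f\rest{K_n})_n$ and show it is Borel. Fix once and for all a countable dense subset $D_n$ of each $K_n$, put $D=\bigcup_n D_n$ (a countable dense subset of $X$), and fix a countable base $(U_j)_j$ of $X$ and a bounded compatible metric $d_Y$ on $Y$. A tuple $(g_n)_n\in\prod_n\C(K_n,Y)$ lies in the image exactly when it is \emph{coherent}, i.e.\ $g_n$ and $g_m$ agree on $K_n\cap K_m$ for all $n,m$, and the function $f\colon X\to Y$ it glues together --- well defined by coherence via $f(x)=g_n(x)$ for any $n$ with $x\in K_n$ --- is continuous. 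Coherence is a closed condition: since the $g_n$ are continuous it is equivalent to agreement on a fixed countable dense subset of each $K_n\cap K_m$, and each such equality is closed because evaluation at a point is continuous on $\prod_n\C(K_n,Y)$. So the whole difficulty is to express continuity of the glued $f$ in a Borel way.

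The key observation is that, although $f$ need not be continuous, its oscillation can be read off the values of $f$ on $D$ alone. I first claim that for every coherent $(g_n)_n$ and every $j$,
\[
\diam f(U_j)=\sup\{\,d_Y(f(d),f(d')) : d,d'\in U_j\cap D\,\}.
\]
Indeed, given $x\in U_j$ we have $x\in K_n$ for some $n$, and since $f\rest{K_n}=g_n$ is continuous and $D_n$ is dense in $K_n$ we can approximate $x$ by points of $U_j\cap D_n$ whose $f$-values converge to $f(x)$; hence $f(U_j)\sub\overline{f(U_j\cap D)}$, which gives the nontrivial inequality. The point is that the right-hand side is a supremum of countably many continuous functions of $(g_n)_n$, hence lower semicontinuous, so for each $j$ and $k$ the set $\{(g_n)_n:\diam f(U_j)<1/k\}$ is Borel. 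This is exactly where piecewise continuity is used: without it, reading oscillation off $D$ would fail, as one sees by gluing two sequences converging to a single limit point $x\notin D$ with constant but distinct $f$-values.

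It then remains to turn pointwise continuity of $f$ into a Borel condition. Since $f$ is continuous iff its oscillation vanishes everywhere, $f$ is continuous iff for every $k$ the open set $G_k=\bigcup\{U_j:\diam f(U_j)<1/k\}$ equals $X$, equivalently iff $K_n\sub G_k$ for all $n$ (because the $K_n$ cover $X$). The main obstacle --- the uncountable quantifier ``for all $x\in X$'' hidden in ``$G_k=X$'' --- is dissolved by compactness: $K_n\sub G_k$ holds iff some finite subfamily of the $U_j$ with $\diam f(U_j)<1/k$ covers $K_n$, that is iff
\[
\exists\,\text{finite }F\sub\N\ \bigl(K_n\sub\textstyle\bigcup_{j\in F}U_j\ \wedge\ \forall j\in F\ \diam f(U_j)<1/k\bigr).
\]
For each fixed finite $F$ the inclusion $K_n\sub\bigcup_{j\in F}U_j$ does not depend on $(g_n)_n$, while $\forall j\in F\ \diam f(U_j)<1/k$ is Borel by the previous paragraph; so $\{(g_n)_n:K_n\sub G_k\}$ is a countable union of Borel sets, hence Borel. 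Intersecting over $n$ and $k$ shows that continuity of $f$ is a Borel condition, and intersecting with the closed coherence condition shows that the image of $\C(X,Y)$ is Borel, as desired.
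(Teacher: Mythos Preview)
Your proof is correct and takes a genuinely different route from the paper's. The paper argues indirectly: it asserts (without detail) that the image is coanalytic, then exhibits an \emph{analytic} description---a coherent tuple $(f_i)_i$ glues to a continuous function iff for every $n$ there exists a sequence $(\alpha_i)_i$ of positive integers such that $d_X(d,d')<1/\alpha_i$ implies $d_Y(f_i(d),f_j(d'))<1/n$ for all $d\in D_i$, $d'\in D_j$---and concludes Borel via Souslin's theorem. The existential over the witness sequence $(\alpha_i)_i$ encodes a uniform modulus of continuity on each $K_i$.

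Your argument bypasses Souslin entirely: by reading $\diam f(U_j)$ off the countable set $D$ (using piecewise continuity) you make each $\{\diam f(U_j)<1/k\}$ an $F_\sigma$ condition, and then compactness of the $K_n$ turns the uncountable quantifier in ``$G_k=X$'' into a countable one. This is more elementary and yields an explicit Borel rank rather than merely ``analytic $\cap$ coanalytic''. The paper's approach, on the other hand, packages the continuity condition as a single uniform-continuity statement, which is perhaps closer in spirit to the proof of \cref{EmbedIsAnalytic} and makes the analytic side transparent at the cost of leaving the coanalytic side implicit.
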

        
\begin{proof}
Clearly $\C(X,Y)$ is a coanalytic subset of $\prod_{n \in \omega} C(K_n,Y)$ and we show it is also analytic, hence Borel. Fix metrics $d_X$ ad $d_Y$ on $X$ and $Y$, and countable dense subsets $D_i\subseteq K_i$. We claim that for every $(f_i)_i\in \prod_{n \in \omega} C(K_n,Y)$, $\bigcup_{i\in \N} f_i$ is the graph of a continuous function if and only 
if for every $n>0$ there is a sequence $(\alpha_i)_i$ with $\alpha_i>0$ such that for every $i,j\in \N$ if $d\in D_i$, $d' \in D_j$ and $d_X(d,d')<\frac{1}{\alpha_i}$ then $d_Y(f_i(d),f_j(d'))<\frac{1}{n}$. This definition is analytic, as desired.

Suppose that $\bigcup_i f_i$ is not the graph of a function, so that for some $x$ there is $i,j\in \N$ and $N>0$ such that $x\in K_i\cap K_j$ and $d_Y(f_i(x), f_j(x))>\frac{1}{N}$ for some $N$. Towards a contradiction, assume that $(f_i)_i$ satisfies the above definition for $n=3N$ and let $(\alpha_i)_i$ witness this fact. By continuity of $f_i$ on $K_i$ and density of $D_i$, there exists $d\in D_i$ with $d_X(x,d)<\frac{1}{2\cdot\alpha_i}$ and $d_Y(f_i(x),f_i(d))<\frac{1}{3N}$. Similarly, there is $d'\in D_j$ with $d_X(x,d')<\frac{1}{2\cdot\alpha_i}$ and $d_Y(f_j(x),f_j(d'))<\frac{1}{3N}$. In particular, $d_X(d,d')<\frac{1}{\alpha_i}$, so $d_Y(f_i(d),f_j(d))<\frac{1}{3N}$. It follows that $d_Y(f_i(x),f_j(x))\leq \frac{1}{N}$, a contradiction. Hence if $(f_i)_i$ satisfies our condition, then $\bigcup_i f_i$ is the graph of a function from $X$ to $Y$. The argument to show that moreover this function is continuous is similar.


Conversely, assume that $f:X\to Y$ is continuous and fix $n>0$. By continuity of $f$, for each $x \in X$ there exists a $\delta_x$ so that $\diam(f(B(x,\delta_x)))<\frac{1}{n}$. Fix $i\in\N$ and notice that $K_i\subseteq \bigcup_{x \in K_i} B(x,\frac{1}{2}\cdot\delta_x)$. By compactness, $K_i$ is already covered by finitely many open balls centered at some $(x_k)_{k<m}$ and we take $\alpha_i>0$ such that $\frac{1}{\alpha_i} < \frac{1}{2}\cdot\min \{\delta_{x_k}\mid k<m\}$. Then if $x \in K_i$, then for some $k<m$ we have $x\in B(x_k,\frac{1}{2}\cdot\delta_{x_k})$ so in particular $B(x,\frac{1}{\alpha_i}) \subseteq B(x_k, \delta_{x_k} )$ which implies $\diam(f(B(x,\frac{1}{\alpha_i})))<\frac{1}{n}$.
\end{proof}

Therefore when $X$ is a $\sigma$-compact metric space and $Y$ is Polish, there is a standard Borel structure on $\C(X,Y)$ which makes the evaluation map Borel. We next show that this fails if $X$ is not $\sigma$-compact and \zerodim{}. In particular, there is no standard Borel structure on $\C(\omega^\omega,\omega^\omega)$ which makes the evaluation map Borel.
As usual, if $B \subseteq X \times Y$, $x\in X$ and $y \in Y$ we denote the horizontal and vertical sections, that is, the sets $\{x' \in X:(x',y)\in B\}$ and $\{y' \in Y:(x,y')\in B\}$, by $B^y$ and $B_x$, respectively.

		\begin{theorem}
			Let $X,Y$ be \zerodim{} Polish spaces with $X$ non-$\sigma$-compact and $|Y| \geq 2$. Then there is no standard Borel structure on $C(X,Y)$ so that the map $C(X,Y) \times X \to Y$ defined by $(f,x) \mapsto f(x)$ is Borel.
		\end{theorem}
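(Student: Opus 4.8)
The plan is to assume, toward a contradiction, that some standard Borel structure $\mathcal{B}$ on $\C(X,Y)$ makes the evaluation map $e\colon \C(X,Y)\times X\to Y$ Borel, and to show that this forces a certain natural subset of a Polish power of $Y$ to be Borel, which I will then refute using the failure of $\sigma$-compactness. The first step is to distill the hypothesis into a Borel injection. Choose a closed copy $D\sub X$ of $\cN$ (this exists by the Hurewicz theorem, since $X$ is Polish and not $\sigma$-compact), fix inside $D$ a countable set $D_0$ that is dense in $D$, and enlarge it to a countable dense set $\{x_n:n\in\N\}$ of $X$ consisting of distinct points. For each $n$ the section $f\mapsto e(f,x_n)=f(x_n)$ is $\mathcal B$-Borel, so the map $\iota\colon \C(X,Y)\to Y^\N$, $\iota(f)=(f(x_n))_n$, is $\mathcal B$-Borel; and it is injective because two continuous maps agreeing on the dense set $\{x_n\}$ agree everywhere ($Y$ being Hausdorff). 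Since $(\C(X,Y),\mathcal B)$ is standard Borel, Lusin--Souslin implies that the image $R:=\iota[\C(X,Y)]$ is a Borel subset of the Polish space $Y^\N$.

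It then suffices to prove that $R$ is \emph{not} Borel. Concretely,
\[
R=\bigl\{(y_n)_n\in Y^\N : \text{the partial map } x_n\mapsto y_n \text{ extends to some } f\in \C(X,Y)\bigr\},
\]
which is visibly $\ana$. (For comparison, when $X=\bigcup_m K_m$ is $\sigma$-compact the set $R$ \emph{is} Borel: extendability can be tested compact piece by compact piece exactly as in the preceding Proposition. This is why the non-$\sigma$-compactness of $X$ is doing the essential work here.) My plan is to show instead that $R$ is $\mathbf{\Pi}^1_1$-hard, hence non-Borel, by reducing to it the $\mathbf{\Pi}^1_1$-complete set of well-founded trees on $\N$.

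For the reduction, fix $a\neq b$ in $Y$ together with a clopen $V\sub Y$ with $a\in V$ and $b\notin V$ (possible as $Y$ is \zerodim{} and $|Y|\ge 2$), and identify $D_0$ with the eventually-zero sequences $s\conc 0^\infty$, $s\in\omega^{<\omega}$. Given a tree $S\sub\omega^{<\omega}$, I define data $\vec y^{\,S}$ by assigning, at the point of $D_0$ coded by $s$, the value $a$ or $b$ according to the parity of $|s|$ when $s\in S$, and a fixed value once $s\notin S$; at the remaining points $x_n\notin D_0$ the value is chosen locally constant, using the zero-dimensionality of $X$ to make $\vec y^{\,S}$ genuinely constant on a clopen neighborhood away from $D$. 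The intended effect is that, along a branch $x\in[S]$, the values $\vec y^{\,S}$ oscillate between $a$ and $b$, forcing oscillation at $x$ of size at least the $V$-separation and thereby obstructing any continuous extension, whereas above a node outside $S$ the data stabilize. Thus $\vec y^{\,S}\in R$ exactly when $[S]=\emptyset$, i.e.\ exactly when $S$ is well-founded; since $S\mapsto \vec y^{\,S}$ is continuous, this reduces the well-founded trees to $R$, contradicting the Borelness of $R$ obtained above.

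The main obstacle is the last construction: one must verify that the oscillation is positive \emph{precisely} at the branches of $S$ and at no other point of $X$, in particular that no spurious obstruction to continuous extension is created at points of $X\setminus D$ or at limit points of $D$ not lying on a branch. This requires choosing the ambient dense set $\{x_n\}$ and the off-$D$ values coherently, so that $\vec y^{\,S}$ is locally constant outside arbitrarily small clopen neighborhoods of $[S]$, and it is here that both the \zerodim{}ity of $X$ and the Hurewicz copy $D\cong\cN$ are used in an essential way. Once the assignment is pinned down, the continuity (indeed, the coordinatewise definability) of $S\mapsto\vec y^{\,S}$ and the equivalence $\vec y^{\,S}\in R \Leftrightarrow S$ well-founded are routine to check.
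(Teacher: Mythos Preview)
Your overall route---Lusin--Souslin to force $R=\iota[\C(X,Y)]$ Borel, then a $\mathbf{\Pi}^1_1$-hardness argument via well-founded trees---is sound and genuinely different from the paper's (which instead parametrizes $\C(X,Y)$ by $\omega^\omega$, passes to a Borel family of clopen subsets of $\omega^\omega$ indexed by $\omega^\omega$, and contradicts the nonexistence of a universal clopen set via a topology-refinement trick). However, the construction of $\vec{y}^{\,S}$ has two real problems.

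First, your labeling rule on $D_0$---parity of $|s|$ when $s\in S$, a fixed value otherwise---does not yield extendable data for well-founded $S$. Take $S=\{\emptyset,(0)\}$: the point coded by $(0)$ receives value $b$ (odd length), while $D_0$-points coded by proper extensions of $(0)$ lie outside $S$ and receive the fixed value; since the latter accumulate on the former, no continuous extension exists. The usual repair is to label by the parity of the \emph{first exit time} $\min\{n:d|_n\notin S\}$, after first replacing $S$ by a tree in which every node has a child outside the tree (e.g.\ double all entries of every node), so that every branch is genuinely witnessed by oscillation of nearby $D_0$-values. Second, the off-$D$ values cannot simply be taken constant: if $E_0$ is dense in the open set $X\setminus D$, a constant value on $E_0$ forces any continuous extension to be that constant on all of $X\setminus D$, and then the trace on $D$ must arise from a clopen partition of $X$---impossible in general when $D$ has empty interior in $X$. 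The clean way out is not to wrestle with $E_0$ at all but to project: $R_0:=\pi_{D_0}(R)$ is analytic (continuous image of the Borel set $R$), and by the \zerodim{} extension theorem the paper itself invokes, $R_0$ coincides with the set of $D_0$-data extending continuously over $D\cong\omega^\omega$; the corrected tree reduction then shows $R_0$ is $\mathbf{\Pi}^1_1$-hard, and ``analytic and $\mathbf{\Pi}^1_1$-hard'' is exactly the contradiction you want.
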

		
		\begin{proof}
			
			Suppose towards a contradiction that there is such a structure. First we reduce the problem to $X=\omega^\omega$. By the isomorphism of standard Borel spaces there exists a Borel bijection $b:\omega^\omega \to C(X,Y)$. As $X$ is not $\sigma$-compact, it contains a closed subspace $X'$ homeomorphic to $\omega^\omega$, so let $e: \omega^\omega \to X'$ be such a homeomorphism. We define a subset $B$ of $\omega^\omega \times \omega^\omega \times Y$ by $(r,s,y) \in B \iff b(r)(e(s))=y$. Then by our assumption, $B$ is a Borel set and of course, for every $r \in \omega^\omega$ we have that $B_r$ is the graph of a continuous function from $\omega^\omega$ to $Y$. Now as $X'$ is a closed subset of $X$, every continuous function $X' \to Y$ extends to a continuous function $X \to Y$ (by \cite[Corollary 6c, p. 151]{kuratowski1966topology1} and \cite[(4.17)]{kechris2012classical}). Consequently, for every $f:\omega^\omega \to Y$ continuous there exists an $r$ so that $B_r$ is the graph of $f$.
			
			Therefore it is enough to prove that there is no such Borel set $B \subseteq  \omega^\omega \times \omega^\omega \times Y$. Fix a continuous onto map $t:Y \to 2$, and consider the following set:
\[
B'=\{(r,s) \in (\omega^\omega)^2: (\forall y)((r,s,y) \in B \implies t(y)=0)\}.
\]
We have the following:
			\begin{itemize}
				\item $B'$ is Borel, since $(r,s) \in B' \iff (\exists y)((r,s,y)  \in B \implies t(y)=0)$,
				\item for every $r$ the section $B'_r$ is clopen,
				\item for every clopen subset $C$ of $\omega^\omega$ there exists an $r$ with $B'_r=C$.
			\end{itemize}
			Without loss of generality we can suppose that for every clopen set $C$ the set $\{r \in \omega^\omega:B'_r=C\}$ cannot be covered by the union of countably many compact sets: indeed, instead of $B'$ we can consider the set $B'' \subseteq \omega^\omega \times \omega^\omega \times \omega^\omega$ defined by $(r',r,s) \in B'' \iff (r,s) \in B'$. Then for each clopen $C$ the set ${(r',r):B''_{(r,r')}=C}$ contains a copy of $\omega^\omega$. Identifying $(\omega^\omega)^2$ with $\omega^\omega$ via a standard homeomorphism gives the desired set.
			
			Let us denote the usual topology on $\omega^\omega$ by $\tau_u$. Now, every section $B'_x$ is clopen and it follows from \cite[Theorem 28.7]{kechris2012classical} that there exists a Polish topology $\tau_0 \supseteq \tau_u$ on $\omega^\omega$ so that $B'$ is open in $(\omega^\omega,\tau_0) \times (\omega^\omega, \tau_u)$. Applying this again for $(B')^c$ we can get a \zerodim{}  Polish topology $\tau \supseteq \tau_0$ on $\omega^\omega$ so that $B'$ is clopen in the product space $(\omega^\omega,\tau) \times (\omega^\omega, \tau_u)$. Note that if a subset of $\omega^\omega$ is compact with respect to $\tau$, then it is also compact with respect to $\tau_u$. 
Hence, for every clopen $C$, the set $\{x:B'_x=C\}$ cannot be covered by countably many $\tau$-compact sets. Now, since $(\omega^\omega,\tau)$ is \zerodim{}, there exists a $\sigma$-compact $\tau$-open set $S$, so that $(\omega^\omega \setminus S,\tau)$ is homeomorphic to $(\omega^\omega,\tau)$, so let us denote such a homeomorphism by $h$.
			
			Define $(x,y) \in U \iff (h(x), y) \in B'$. Now,
			$U$ is a clopen subset of $(\omega^\omega)^2$ (indeed, it is the preimage of a clopen set under the map $(h,id)$), for every $C$ clopen there exists an $r$ with $U_r=C$: as $S$ is $\sigma$-compact, for any $C$ there is an $r$ with $B_r=C$ so that $r\not \in S$. This contradicts the fact that there is no universal clopen set.
		\end{proof}
		
		\section{Chaos}
		\label{sec:CHAOS}
		Our starting point is a general and simple observation about the embeddability quasi-order on (not necessarily continuous nor definable) functions between topological spaces.    
		\begin{definition}[The function-to-graph map]
			To every function $f:X\to Y$ between topological spaces we associate a graph $\Gr_{f}=(V_{f},E_{f})$ where $V_{f}=\{x\in X \mid \text{$x$ is a limit point}\}$ and
			\begin{align*}
			\{x,y\}\in E_{f} \IFF& \text{$x\neq y$ and there exist injective sequences}\\
			&\text{$(x_{n})_{n}$ and $(y_{n})_{n}$ in $X$ such that $x_{n}\to x$, $y_{n}\to y$,}\\
			& \text{and $f(x_{n})=f(y_{n})$ for all $n\in\N$.}
			\end{align*}
		\end{definition}
		
		Notice that if $(x_{n})_{n}$ and $(y_{n})_{n}$ witness that $\{x,y\}\in E_{f}$, then so do $(x_{n_{k}})_{k}$ and $(y_{n_{k}})_{k}$ for any strictly increasing sequence $(n_{k})_{k}$ of natural numbers.
		
		The map $f\mapsto \Gr_f$ is actually a homomorphism from $\emb$ to $\ihom$.
		
		\begin{lemma}\label{lemEmbIhom}
			Let $f:X\to Y$ and $f':X'\to Y'$ be two functions between topological spaces. Then $f\emb f'$ implies $\Gr_{f}\ihom \Gr_{f'}$.
		\end{lemma}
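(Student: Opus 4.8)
The plan is to verify directly that the embedding $\sigma$ witnessing $f\emb f'$ restricts to an injective homomorphism from $\Gr_f$ to $\Gr_{f'}$. So fix embeddings $\sigma:X\to X'$ and $\tau:\im f\to\im f'$ with $\tau\circ f=f'\circ\sigma$, and set $h=\sigma\rest{V_f}$. First I would argue that $h$ really does map $V_f$ into $V_{f'}$. If $x\in V_f$ is a limit point of $X$, pick an injective sequence $(x_n)_n$ of points distinct from $x$ with $x_n\to x$; since $\sigma$ is an embedding we have $\sigma(x_n)\to\sigma(x)$, and the injectivity of $\sigma$ makes $(\sigma(x_n))_n$ an injective sequence of points distinct from $\sigma(x)$, so $\sigma(x)$ is a limit point of $X'$, i.e.\ $\sigma(x)\in V_{f'}$. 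Injectivity of $h$ is immediate from injectivity of $\sigma$.

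Next I would check the homomorphism property. Suppose $\{x,y\}\in E_f$, as witnessed by injective sequences $(x_n)_n$ and $(y_n)_n$ with $x_n\to x$, $y_n\to y$, and $f(x_n)=f(y_n)$ for all $n$. Then $\sigma(x)\neq\sigma(y)$ because $\sigma$ is injective and $x\neq y$, and exactly as above $(\sigma(x_n))_n$ and $(\sigma(y_n))_n$ are injective sequences converging to $\sigma(x)$ and $\sigma(y)$ respectively. Finally, applying the commuting identity $f'\circ\sigma=\tau\circ f$ together with $f(x_n)=f(y_n)$ yields $f'(\sigma(x_n))=\tau(f(x_n))=\tau(f(y_n))=f'(\sigma(y_n))$ for every $n$. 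Hence the pair $(\sigma(x_n))_n$, $(\sigma(y_n))_n$ witnesses $\{h(x),h(y)\}=\{\sigma(x),\sigma(y)\}\in E_{f'}$, so $h$ is an injective homomorphism and $\Gr_f\ihom\Gr_{f'}$.

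There is no serious obstacle here: the entire content is that topological embeddings preserve and reflect convergent sequences, which is precisely what transports the witnessing sequences from $X$ to $X'$ while keeping them injective and non-stationary. It is worth recording what the argument does and does not use: the only property of $\tau$ we need is that it is a genuine function satisfying $\tau\circ f=f'\circ\sigma$, so that equal $f$-values map to equal $f'$-values; we never invoke that $\tau$ is injective or an embedding. The single point to phrase carefully is the preservation of limit points, which rests on the two-sided characterization of embeddings recalled in the preliminaries.
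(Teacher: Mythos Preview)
Your proof is correct and follows essentially the same approach as the paper: restrict $\sigma$ to the limit points to get the injective map, then push the witnessing sequences forward via $\sigma$ and use $\tau\circ f=f'\circ\sigma$ to preserve the equality of images. Your write-up is slightly more detailed (explicitly verifying that limit points go to limit points and that $\sigma(x)\neq\sigma(y)$), but the argument is the same.
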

		\begin{proof}
			Let $(\sigma,\tau)$ embed $f$ into $f'$. Let us denote by $\check{\sigma}:V_{f}\to V_{f'}$ the restriction of $\sigma:X \to X'$ to the limit points of $X$. This is well defined and injective since $\sigma$ is an embedding, so in particular $\sigma$ must send any limit point of $X$ to some limit point of $X'$. To see that it is a homomorphism from $\Gr_{f}$ to $\Gr_{f'}$, suppose $\{x,y\}\in E_{f}$. There exist injective sequences $(x_{n})_{n}$ and $(y_{n})_{n}$ such that $x_{n}\to x$, $y_{n}\to y$, and $f(x_{n})=f(y_{n})$ for all $n\in \N$. As $\sigma$ is injective and continuous, it follows that $(\sigma(x_{n}))_{n}$ and $(\sigma(y_{n}))_{n}$ are injective sequences converging to $\sigma(x)$ and $\sigma(y)$, respectively. Moreover $f'(\sigma(x_{n}))=\tau(f(x_{n}))=\tau(f(y_{n}))=f'(\sigma(y_{n}))$ for every $n\in \N$, so $\{\sigma(x),\sigma(y)\}\in E_{f'}$ as desired.
		\end{proof}

		In this section we prove that if $Y$ is not discrete and $X$ has infinitely many limit points, then $(\CCG, \ihom)$ reduces continuously to $(\C(X,Y), \emb)$.    Notice that $Y$ is not discrete if and only if $\omega+1$ embeds into $Y$, so by \cref{cor:embeddingrangesideY} it is enough to show that the result holds for $\C(X,\omega+1)$ when $X$ has infinitely many limit points.
		

		\subsection{One special case, or maybe two}
		
		We define a homomorphism $G\mapsto f^G$ from $(\CCG,\ihom)$ to $(\C(\omega^2,\omega+1),\emb)$. We then show that $G$ is isomorphic to $\Gr_{f^G}$ for every $G\in \CCG$. As $f\mapsto \Gr_f$ is a homomorphism by \cref{lemEmbIhom}, it will follow that $G\mapsto f^G$ is actually a reduction, as desired.
		
		Remember that we identify $\omega^2$ with $\bigsqcup_{n\in \N} \omega+1$. The homomorphism is defined as follows: each limit point $(m,\omega)$ corresponds to the vertex $m$ and we pick for every other vertex $n$ a sequence of isolated points converging to $(m,\omega)$. If $\{m,n\}$ is an edge then the sequence converging to $(m,\omega)$ that we picked for $n$ and the sequence converging to $(n,\omega)$ that we picked for $m$ are mapped to the same converging sequence in $\omega+1$. Otherwise, they are mapped to distinct sequences. More formally:

		\begin{definition}[The graph-to-function map]\label{def:GraphToFunction}
			Fix bijections $\bijc{-}_0:\N^2 \to \N$, $\bijc{-}_1:[\N]^2\to\N$ and $\bijc{-}_2:2 \times \N^2\to \N$. For clarity, we write $\bijc{m,n}_1$ instead of $\bijc{\{m,n\}}_1$. Note that if $n\neq m$, then $\bijc{m,n}_0\neq\bijc{n,m}_0$, while $\bijc{m,n}_1=\bijc{n,m}_1$. For every vertex $m\in \N$, define $f^G_m:\omega+1\to\omega+1$ by
			\begin{align*}
			\bijc{n,p}_0&\longmapsto\begin{cases}
            \bijc{0,\bijc{m,n}_0,p}_2\mbox{ if }\{m,n\}\notin E^G,\\
            \bijc{1,\bijc{m,n}_1,p}_2\mbox{ if }\{m,n\}\in E^G,
			\end{cases}\\
			\omega &\longmapsto \omega
			\end{align*}
			We define $f^G:\omega^2\to \omega+1$ to be the sum
			$\bigsqcup_{n\in\N}f^G_n: \bigsqcup_{n\in \N} \omega+1 \to \omega+1$.
		\end{definition}
		
		Clearly each $f^G_n$ is continuous and so $f^G$ is continuous.

		\begin{lemma}\label{lemCtbleGrIhomEmb}
			For every $G,G'\in\CCG$, if $G\ihom G'$ then $f^G\emb f^{G'}$.
		\end{lemma}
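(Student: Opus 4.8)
The plan is to fix an injective homomorphism $h\colon G\to G'$ and build from it a single embedding $\sigma\colon\omega^2\to\omega^2$ witnessing $f^G\emb f^{G'}$. I would verify the embedding conditions directly rather than through \cref{EmbSigmaQuasiEmb}: note first that $f^G$ is \emph{not} a pseudo-embedding (for an edge $\{m,n\}$ the interleaving of the two sequences $(m,\bijc{n,k}_0)_k$ and $(n,\bijc{m,k}_0)_k$ is a non-trivial sequence whose $f^G$-image converges to $\omega$ although the sequence itself does not converge), so that proposition does not apply. Instead I use the basic observation recorded after the definition of embeddability: to show $\sigma$ embeds $f^G$ into $f^{G'}$ it suffices that $\sigma$ is a topological embedding which \emph{reduces the fibre equivalence relations}, i.e. $f^G(x_0)=f^G(x_1)\lrao f^{G'}(\sigma(x_0))=f^{G'}(\sigma(x_1))$ for all $x_0,x_1$, and that the resulting injection $\tau\colon\im f^G\to\im f^{G'}$ is itself an embedding.

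The map $\sigma$ sends the $m$-th copy of $\omega+1$ into the $h(m)$-th copy, with $\sigma(m,\omega)=(h(m),\omega)$, relabelling the sequence \enquote{towards $n$} as a sequence \enquote{towards $h(n)$}, but with a parity shift on the position coordinate that \emph{synchronises} edges and \emph{desynchronises} non-edges. Concretely I set $\sigma(m,\bijc{n,p}_0)=(h(m),\bijc{h(n),2p}_0)$ whenever $\{m,n\}\in E^G$, or $\{m,n\}\notin E^G$ and $m\le n$, and $\sigma(m,\bijc{n,p}_0)=(h(m),\bijc{h(n),2p+1}_0)$ when $\{m,n\}\notin E^G$ and $m>n$. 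Since $h$ is injective, $\sigma$ is injective; its restriction to copy $m$ is an injection of $\omega+1$ whose isolated part is an injective map of index sets, and because an injective self-map of $\N^2$ both preserves and reflects \enquote{escaping every finite set}, this restriction preserves convergence to $\omega$ and is therefore an embedding. As distinct copies are clopen and are sent to distinct copies, $\sigma$ is a topological embedding of $\omega^2$.

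The core step is matching the fibre equivalence relations. All limit points are collapsed to $\omega$ by both $f^G$ and $f^{G'}$ and $\sigma$ maps limits to limits, so that part is automatic, and the content lies in the isolated points. A direct computation of the fibres of $f^{G'}$ shows that two \emph{distinct} isolated points are $f^{G'}$-equivalent exactly when they form a \enquote{facing edge pair} $(a,\bijc{b,q}_0),(b,\bijc{a,q}_0)$ with $\{a,b\}\in E^{G'}$. Substituting the definition of $\sigma$ and using injectivity of $h$, any $f^{G'}$-collision between $\sigma(m_0,\bijc{n_0,p_0}_0)$ and $\sigma(m_1,\bijc{n_1,p_1}_0)$ forces $m_1=n_0$ and $n_1=m_0$, so the only candidate partner of a leg $(m_0,\bijc{n_0,p_0}_0)$ is its facing leg $(n_0,\bijc{m_0,p_1}_0)$. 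For an edge $\{m_0,n_0\}$ both legs land on \emph{even} positions of mutually facing sequences and $\{h(m_0),h(n_0)\}\in E^{G'}$ because $h$ is a homomorphism, so they collide iff $p_0=p_1$, which is exactly when the legs are already $f^G$-equivalent; for a non-edge the two legs land on positions of \emph{opposite parity} and hence never collide, matching the fact that $f^G$ separates them. This gives the biconditional in both directions.

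The main obstacle is precisely this backward direction, and it is what dooms the naive \enquote{relabel everything by $h$} map: since $h$ is only a homomorphism, a non-edge $\{m,n\}$ of $G$ may be sent to an edge $\{h(m),h(n)\}$ of $G'$, and in $f^{G'}$ such an edge \emph{forces} the two facing sequences to collide, producing an identification absent from $f^G$; the parity shift is exactly the device that kills these spurious collisions while leaving genuine edges synchronised. Once the equivalence relations agree, the induced map $\tau\colon\im f^G\to\im f^{G'}$ is a well-defined injection fixing $\omega$ and sending isolated values to isolated values, and being an injection between two spaces homeomorphic to $\omega+1$, the same preservation/reflection of \enquote{escape to infinity} argument shows it is continuous at $\omega$ with continuous inverse, hence an embedding. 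Therefore $(\sigma,\tau)$ embeds $f^G$ into $f^{G'}$, as required.
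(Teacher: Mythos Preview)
Your argument is correct, and it actually repairs a gap that the paper's own proof leaves. The paper takes the straightforward relabelling $\check{\sigma}(m,\bijc{n,p}_0)=(\sigma(m),\bijc{\sigma(n),p}_0)$ (where $\sigma$ is the given injective homomorphism, your $h$) together with $\tau(\bijc{i,\bijc{m,n}_i,p}_2)=\bijc{i,\bijc{\sigma(m),\sigma(n)}_i,p}_2$, and asserts that $(\check{\sigma},\tau)$ embeds $f^G$ into $f^{G'}$. But if $\{m,n\}\notin E^G$ while $\{\sigma(m),\sigma(n)\}\in E^{G'}$---which an injective homomorphism certainly allows---then
\[
\tau\bigl(f^G(m,\bijc{n,p}_0)\bigr)=\bijc{0,\bijc{\sigma(m),\sigma(n)}_0,p}_2\neq\bijc{1,\bijc{\sigma(m),\sigma(n)}_1,p}_2=f^{G'}\bigl(\check{\sigma}(m,\bijc{n,p}_0)\bigr),
\]
so the required commutation $\tau\circ f^G=f^{G'}\circ\check{\sigma}$ fails. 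This is exactly the spurious collision you anticipated; your parity shift on the $p$-coordinate desynchronises the two legs of a non-edge of $G$ regardless of whether the image pair happens to be an edge of $G'$, and your fibre computation verifying that your $\sigma$ reduces the kernel equivalence relations is correct in every case. The final step---that the induced $\tau$ is an embedding---is fine too: an injection between subsets of $\omega+1$ which fixes $\omega$ and sends naturals to naturals both preserves and reflects convergence to $\omega$. So your route has the same outline as the paper's (an explicit domain embedding built from $h$ copy by copy), but it supplies the missing ingredient that makes the construction go through.
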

		\begin{proof}
			Suppose that $\sigma:\N\to \N$ is an injective homomorphism from $G$ to $G'$. Define $\check{\sigma}:\omega^2 \to \omega^2$ by
			\begin{align*}
			(m,\bijc{n,p}_0)&\longmapsto (\sigma(m),\bijc{\sigma(n),p}_0),\\
			(m,\omega)&\longmapsto (\sigma(m),\omega),
			\end{align*}
			for every $m,n,p \in \N$.
			 Then define
			\begin{align*}
			\tau: \omega+1&\longrightarrow \omega+1\\
			\bijc{i,\bijc{m,n}_i,p}_2 &\longmapsto \bijc{i,\bijc{\sigma(m),\sigma(n)}_i,p}_2,
			\text{ for } i\in\{0,1\}\\
			\omega&\longmapsto \omega.
			\end{align*}
			One easily checks that $(\check{\sigma},\tau)$ embeds $f^{G}$ into $f^{G'}$.
		\end{proof}

		\begin{lemma}\label{lemIsoCtbleGraphs}
			For every graph $G=(\N,E^G)$ in $\CCG$, the map
			\begin{align*}
			j:\N &\longrightarrow V_{f^{G}}
			\\
			n&\longmapsto (n,\omega),
			\end{align*}
			is an isomorphism of graphs from $G$ to $\Gr_{f^{G}}$.
		\end{lemma}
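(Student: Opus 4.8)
The plan is to establish two things: that $j$ maps $\N$ bijectively onto $V_{f^G}$, and that it both preserves and reflects the edge relation. Since $\omega^2=\bigsqcup_{n\in\N}(\omega+1)$ is a topological sum in which each summand $\{m\}\times(\omega+1)$ is clopen, the only limit points are the apices $(m,\omega)$, exactly one per summand. Thus $V_{f^G}=\{(m,\omega):m\in\N\}$ and $j$ is a bijection. It then remains to prove, for $m\neq n$, that $\{m,n\}\in E^G$ if and only if $\{(m,\omega),(n,\omega)\}\in E_{f^G}$ (when $m=n$ neither side is an edge, so there is nothing to check).

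For the forward implication I would exhibit explicit witnessing sequences. Assuming $\{m,n\}\in E^G$, set $x_k=(m,\bijc{n,k}_0)$ and $y_k=(n,\bijc{m,k}_0)$. Injectivity of $\bijc{-}_0$ in its last argument makes each of these an injective sequence, and since an injective sequence of natural numbers escapes every finite set it converges to $\omega$ inside its summand; hence $x_k\to(m,\omega)$ and $y_k\to(n,\omega)$. Reading off the edge case of \cref{def:GraphToFunction} together with the symmetry $\bijc{m,n}_1=\bijc{n,m}_1$, both $f^G_m(\bijc{n,k}_0)$ and $f^G_n(\bijc{m,k}_0)$ equal $\bijc{1,\bijc{m,n}_1,k}_2$, so $f^G(x_k)=f^G(y_k)$ for all $k$, and therefore $\{(m,\omega),(n,\omega)\}\in E_{f^G}$.

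The reflection direction is where the argument has to be careful, and I expect it to be the main obstacle. Suppose $\{(m,\omega),(n,\omega)\}\in E_{f^G}$, witnessed by injective sequences $(x_k)_k,(y_k)_k$ with $f^G(x_k)=f^G(y_k)$. Because each summand is clopen, after deleting finitely many terms I may write $x_k=(m,a_k)$ and $y_k=(n,b_k)$ with $a_k,b_k\in\N$, and I then decompose $a_k=\bijc{s_k,p_k}_0$ and $b_k=\bijc{t_k,q_k}_0$. The equality $f^G_m(a_k)=f^G_n(b_k)$, read through the bijection $\bijc{-}_2$, forces its first coordinate $i\in\{0,1\}$ to coincide on the two sides for each $k$. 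The delicate point is ruling out $i=0$: in that case the equality would give $\bijc{m,s_k}_0=\bijc{n,t_k}_0$, and injectivity of $\bijc{-}_0$ would yield $m=n$, contradicting $m\neq n$. Hence $i=1$ for every $k$, which simultaneously places $\{m,s_k\}$ and $\{n,t_k\}$ in $E^G$ and, by injectivity of $\bijc{-}_1$, gives $\{m,s_k\}=\{n,t_k\}$ as two-element sets. Since $m\neq n$, this set equality forces $s_k=n$ and $t_k=m$, so $\{m,n\}=\{m,s_k\}\in E^G$, completing the proof. The whole subtlety lies in keeping the three bijections separate, so that the bookkeeping distinguishing $i=0$ from $i=1$ cleanly separates non-edges from edges.
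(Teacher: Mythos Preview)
Your proof is correct and follows the same approach as the paper's. In fact, your backward direction is more explicit than the paper's, which simply asserts that from $f^G(x_0)=f^G(y_0)$ one must have $k'=m$ and $k=n$; you spell out the case analysis on the first coordinate $i\in\{0,1\}$ of $\bijc{-}_2$ and explain why $i=0$ forces $m=n$, which is exactly the content the paper leaves implicit.
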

		\begin{proof}
			Notice that $j:V\to V_{f^{G}}$ is bijective. First, take an edge $\{m,n\}\in E^G$. Consider the two sequences given by $x_{p}=(m,\bijc{n,p}_0)$ and $y_{p}=(n,\bijc{m,p}_0)$, $p\in \N$. These sequences are injective, and converge respectively to $j(m)$ and $ j(n)$. Moreover, by the definition of $f^{G}$, we have $f^{G}(x_{p})=f^{G}(y_{p})$ for every $p\in\N$, hence $\{j(m),j(n)\}\in E_{f^{G}}$.
						
			Conversely, suppose that $(x_{p})_{p}$ and $(y_{p})_{p}$ are injective sequences in $\bigsqcup_{n} \omega+1$ witnessing that $\{j(m),j(n)\}\in E_{f^G}$. Since $x_p \to (m,\omega)$ and $y_p \to (n,\omega)$, we can assume by simultaneously passing to subsequences that $(x_{p})_{p}\subseteq \{m\}\times \omega$ and $(y_{p})_{p}\subseteq \{n\}\times \omega$. In particular, $x_0=(m,\bijc{k,p}_0)$ and $y_0=(n,\bijc{k',p'}_0)$ for some $k,k',p,p'\in\N$. But $f^G(x_0)=f^G(y_0)$, so by the definition of $f^G$ this is only possible if $p=p'$, $k'=m$ and $k=n$. Consequently, $\{m,n\} \in E^G$.
		\end{proof}
		
		While the reduction from $(\CCG,\ihom)$ to $\C(\omega^2, \omega+1)$ is simple and illuminating (at least we hope so), we will actually need a reduction to $\C(\omega^2+1,\omega+1)$ in order to derive the general case. 
		
		For every $G\in \CCG$ we extend the function $f^G:\omega^2\to\omega+1$ from \cref{def:GraphToFunction} to a function $\bar{f}^G:\omega^2+1\to \omega+1$ letting $\bar{f}^G(\infty)=\omega$. Using the definition of $f^G$ one easily sees that for every finite set $F\subseteq \omega+1$ of natural numbers, i.e. of isolated points, the set $(\bar{f}^G)^{-1}(F)$ is clopen in $\omega^2+1$ as a finite set of isolated points. This clearly implies that $\bar{f}^G$ is continuous.
		
		The following lemma simplifies the task of proving that both $G\mapsto f^G$ and $G\mapsto \bar{f}^G$ actually yields continuous reductions. 
		
		\begin{lemma}\label{l:RestrictionIsReduction}
			The restriction map $\C(\omega^2+1,\omega+1)\to \C(\omega^2,\omega+1)$, $f\mapsto f\rest{\omega^2}$ is a continuous one-to-one reduction with respect to embeddability.
		\end{lemma}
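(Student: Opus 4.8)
The plan is to treat the four assertions---continuity, injectivity, and the two implications of the reduction property---separately, with the last one carrying essentially all the weight. For \textbf{continuity}, I would check that the preimage of each subbasic open set is open. A subbasic open set of $\C(\omega^2,\omega+1)$ has the form $S_{\omega^2,\omega+1}(C,U)=\{h:h(C)\sub U\}$ with $C\sub\omega^2$ compact and $U\sub\omega+1$ open. Since a compact subset of $\omega^2$ is still compact in $\omega^2+1$, the preimage of this set under $f\mapsto f\rest{\omega^2}$ is exactly $S_{\omega^2+1,\omega+1}(C,U)$, which is open. For \textbf{injectivity}, note that $\infty$ is a limit point of $\omega^2+1$, so $\omega^2$ is dense in $\omega^2+1$; two continuous maps into the Hausdorff space $\omega+1$ that agree on the dense set $\omega^2$ agree everywhere, so $f\mapsto f\rest{\omega^2}$ is one-to-one.

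For the \textbf{forward direction} $f\emb g\Rightarrow f\rest{\omega^2}\emb g\rest{\omega^2}$, let $(\sigma,\tau)$ embed $f$ into $g$. Here $\infty$ is the unique point of maximal Cantor--Bendixson rank, that is $\CB_2(\omega^2+1)=\{\infty\}$, so by \cref{fact:onCBrank} we have $\sigma(\infty)\in\CB_2(\omega^2+1)=\{\infty\}$, whence $\sigma(\infty)=\infty$ and, by injectivity, $\sigma(\omega^2)\sub\omega^2$. Then $\sigma\rest{\omega^2}\colon\omega^2\to\omega^2$ is an embedding (a restriction of an embedding), and for $x\in\omega^2$ we have $\tau(f(x))=g(\sigma(x))\in\im(g\rest{\omega^2})$; hence $\tau\rest{\im(f\rest{\omega^2})}$ embeds into $\im(g\rest{\omega^2})$, and the pair witnesses $f\rest{\omega^2}\emb g\rest{\omega^2}$.

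The real work is the \textbf{backward direction} $f\rest{\omega^2}\emb g\rest{\omega^2}\Rightarrow f\emb g$. Given a witness $(\sigma_0,\tau_0)$, the naive candidate is $\sigma=\sigma_0\cup\{\infty\mapsto\infty\}$ with $\tau=\tau_0\cup\{f(\infty)\mapsto g(\infty)\}$; the subtlety is that $\sigma$ is a topological embedding precisely when $\sigma_0$ is \emph{proper}, i.e. $x_k\to\infty$ implies $\sigma_0(x_k)\to\infty$. This genuinely can fail, since an embedding of $\omega^2$ into itself may send an escaping sequence of isolated points into a single summand, accumulating at a limit point outside its image. My plan is therefore to \emph{replace} $\sigma_0$ by a proper witness, using continuity at $\infty$ as the key leverage. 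Write $a=f(\infty)$ and $b=g(\infty)$. For any clopen neighbourhood $W$ of $a$ the set $f^{-1}((\omega+1)\setminus W)$ is a compact subset of $\omega^2$, hence meets only finitely many summands, so off finitely many summands $f$ takes values in $W$; likewise for $g$ and $b$. A first useful consequence is that continuity already forces $\tau_0(a)=b$: the limit points $(n,\omega)$ of the cofinitely many ``bulk'' summands map under $\sigma_0$ to limit points escaping to $\infty$ on which $g$ equals $\tau_0(a)$, so $\tau_0(a)=\lim g=g(\infty)=b$. This both makes the extension of $\tau$ by $a\mapsto b$ consistent and guarantees that $g^{-1}(b)$ contains escaping positions. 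I would then reroute the cofinitely many bulk summands of the domain---where $f$ is essentially constant equal to $a$---into escaping summands of the codomain carrying the matching value $b$, leaving the finitely many exceptional summands placed as in $\sigma_0$, and glue the pieces via \cref{LemWedgeEmb}. The result is a proper embedding $\sigma$ with $\sigma(\infty)=\infty$; extending $\tau_0$ by $a\mapsto b$ and verifying the sequential embedding criterion at $\infty$ (which now uses properness together with continuity of $f$ and $g$ at $\infty$) shows $(\sigma,\tau)$ embeds $f$ into $g$.

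The hard part is exactly this repair: converting an arbitrary, possibly non-proper, witnessing embedding of the restrictions into a proper one that is compatible with the values at $\infty$. The mechanism that makes it go through is that continuity at $\infty$ pins down $\tau_0(f(\infty))=g(\infty)$ and supplies escaping targets of the correct value, so that the bulk of the domain can be re-embedded properly while the finitely many exceptional summands are handled by hand. Everything else---continuity, injectivity, and the forward implication---is routine.
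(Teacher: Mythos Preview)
Your continuity, injectivity, and forward-direction arguments are correct and match the paper's. For the backward direction, the paper takes a much shorter route: it simply asserts that every embedding $\sigma:\omega^2\to\omega^2$ extends to an embedding of $\omega^2+1$ by setting $\sigma'(\infty)=\infty$, and reads off both implications from the resulting bijection between witnesses. You are right to distrust this step; in fact the assertion is false. For instance, $\sigma(n,0)=(0,n)$, $\sigma(n,k+1)=(n+1,k)$, $\sigma(n,\omega)=(n+1,\omega)$ defines an embedding of $\omega^2$ into itself, yet $(n,0)\to\infty$ in $\omega^2+1$ while $\sigma(n,0)=(0,n)\to(0,\omega)\neq\infty$, so the extension by $\infty\mapsto\infty$ is not even continuous. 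The backward direction therefore genuinely calls for the kind of repair you attempt, not the one-line argument the paper gives.

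However, your repair has its own gap. The rerouting of ``bulk'' summands requires $f$ to be literally constant with value $a=f(\infty)$ on cofinitely many summands, so that those summands can be redirected freely to escaping summands on which $g\equiv b$. This is fine when $a$ is isolated in $\omega+1$: continuity at $\infty$ then forces $f\equiv a$ on all but finitely many summands, and your argument goes through. But when $a=\omega$, continuity only gives that the bulk summands have $f$-values eventually inside every neighbourhood of $\omega$; $f$ need not be constant on them, and distinct summands may carry distinct, even infinite, value-patterns. You cannot then reroute them into summands where $g\equiv b$ while preserving $\tau\circ f=g\circ\sigma$. Your argument that $\tau_0(a)=b$ also needs more care in this case, since $a=\omega$ may fail to lie in $\im(f\rest{\omega^2})=\dom\tau_0$ at all. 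The case $a=\omega$ requires a separate treatment.
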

		\begin{proof}
			To check the continuity, let $K\subseteq \omega^2$ be compact and $U\subseteq \omega+1$ be open. Then $K$ is also compact as a subset of $\omega^2+1$, so for all $f\in \C(\omega^2+1,\omega+1)$:
			\[
			f(K)\subseteq U \IFF f\rest{\omega^2}(K)\subseteq U,
			\]
			which proves continuity.
			
			It is one-to-one since $\omega^2$ is a dense subset of $\omega^2+1$, and if two continuous functions agree on a dense subset then they are equal.
			
			To see that it is a reduction, notice that if $\sigma':\omega^2+1\to\omega^2+1$ is an embedding, then necessarily $\sigma'(\omega^2)=\omega^2$ and so it restricts to an embedding $\sigma:\omega^2\to\omega^2$. Moreover if $\sigma:\omega^2\to \omega^2$ is an embedding then it extends to an embedding $\sigma':\omega^2+1\to\omega^2+1$ by letting $\sigma'(\omega^2)=\omega^2$.
			It follows that for every $f,g\in \C(\omega^2+1,\omega+1)$, $\sigma$ embeds $f$ into $g$ if and only if $\sigma'$ embeds $f\rest{\omega^2}$ into $g\rest{\omega^2}$.
		\end{proof}

		\begin{remark}
			Notice that the restriction map of \cref{l:RestrictionIsReduction} is not a topological embedding of $\C(\omega^2+1,\omega+1)$ into $\C(\omega^2,\omega+1)$. Indeed one can check that the direct image of the open set $S_{\omega^2+1,\omega+1}(\omega^2+1,\omega+1\setminus\{0\})$ is not open in $\C(\omega^2,\omega+1)$.
		\end{remark}

		We now conclude our two special cases, before attacking the general case.
		
		\begin{proposition}\label{p:inaspecialcaseY}
			The maps $G\mapsto f^G$ and $G\mapsto \bar{f}^G$ are continuous reductions from $(\CCG, \ihom)$ to $(\C(\omega^2,\omega+1), \emb)$ and $(\C(\omega^2+1,\omega+1),\emb)$ respectively.
		\end{proposition}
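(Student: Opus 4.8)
The plan is to combine the three preceding lemmas into the equivalence $G\ihom G'\iff f^G\emb f^{G'}$, to transfer it to $\bar f^G$ through the restriction map of \cref{l:RestrictionIsReduction}, and finally to check continuity directly. I begin with the claim that $G\mapsto f^G$ is a reduction. The forward implication $G\ihom G'\Rightarrow f^G\emb f^{G'}$ is exactly \cref{lemCtbleGrIhomEmb}. For the converse, assume $f^G\emb f^{G'}$. Then \cref{lemEmbIhom} produces an injective homomorphism $h\colon\Gr_{f^G}\to\Gr_{f^{G'}}$, while \cref{lemIsoCtbleGraphs} provides graph isomorphisms $j_G\colon G\to\Gr_{f^G}$ and $j_{G'}\colon G'\to\Gr_{f^{G'}}$. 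The composite $j_{G'}^{-1}\circ h\circ j_G$ is then an injective homomorphism from $G$ to $G'$, so $G\ihom G'$. This yields the equivalence and hence the reduction.

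Next I would treat $\bar f^G$ using \cref{l:RestrictionIsReduction}. By construction the restriction map $r\colon\C(\omega^2+1,\omega+1)\to\C(\omega^2,\omega+1)$ sends $\bar f^G$ to $\bar f^G\rest{\omega^2}=f^G$, and $r$ is a reduction. Hence $\bar f^G\emb\bar f^{G'}\iff f^G\emb f^{G'}\iff G\ihom G'$, so $G\mapsto\bar f^G$ is a reduction as well.

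It remains to establish continuity, which is where the real work lies. As $\omega^2$ and $\omega^2+1$ are locally compact, both target spaces are metrizable, so it suffices to verify sequential continuity. I would exploit two features of the construction: the value $f^G(m,\bijc{n,p}_0)$ depends only on whether $\{m,n\}\in E^G$, while each limit point $(m,\omega)$---and the extra point $\infty$ in the case of $\bar f^G$---is sent to $\omega$ independently of $G$; and $G_i\to G$ in $\CCG$ means the $G_i$ eventually agree with $G$ on every fixed pair. The crucial finiteness fact is that for each $k\in\N$ the set $F_k$ of isolated points of $\omega^2$ that can receive a value $<k$ for some graph is finite, since $\bijc{-}_2$ is a bijection and only finitely many of its codes are $<k$. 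Given a compact $C$ and $\varepsilon>0$, I would pick $k$ so that any two values that are both $\geq k$, or equal to $\omega$, are $\varepsilon$-close. Then on $C\setminus F_k$ the values of $f^{G_i}$ and $f^G$ always lie in this range and are $\varepsilon$-close, while $C\cap F_k$ is finite, so for large $i$ the $G_i$ agree with $G$ on the finitely many relevant pairs and $f^{G_i}=f^G$ there. Thus $f^{G_i}\to f^G$ uniformly on $C$, and the identical argument applies to $\bar f^G$, the point $\infty$ merely contributing the constant value $\omega$.

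The step I expect to be the main obstacle is precisely this continuity verification: a compact $C\subseteq\omega^2$ is typically infinite---for instance a whole summand $\{n\}\times(\omega+1)$---so a priori $f^G\rest{C}$ depends on infinitely many edges of $G$. The observation that values below any fixed threshold are confined to the finite set $F_k$ is what defuses this difficulty and forms the crux of the proof.
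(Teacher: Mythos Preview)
Your proposal is correct and follows essentially the same approach as the paper: the reduction part is assembled from the three preceding lemmas and \cref{l:RestrictionIsReduction} exactly as the authors do, and your continuity argument hinges on the same finiteness observation---that only finitely many isolated points of the domain can receive a value below any fixed threshold.

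The only organizational difference worth noting is in the continuity step. You argue continuity of $G\mapsto f^G$ directly (compact-open topology, one compact set at a time) and then observe that the identical argument covers $G\mapsto\bar f^G$. The paper instead proves continuity only for $G\mapsto\bar f^G$---where compactness of $\omega^2+1$ reduces everything to uniform convergence on the whole space, so no compact $C$ needs to be fixed---and then obtains continuity of $G\mapsto f^G$ for free as the composition with the continuous restriction map from \cref{l:RestrictionIsReduction}. This saves a little work, since in the paper's version the finite set $F$ depends only on $\varepsilon$ and not on an auxiliary compact $C$. Your route is slightly longer but equally valid; note however that you cannot run the deduction in your direction (from $f^G$ to $\bar f^G$) via the restriction map, since that map is not a topological embedding, which is why you correctly repeat the argument rather than transfer it.
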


		\begin{proof}
			We claim that it only remains to prove the continuity of $G\mapsto \bar{f}^G$ from $\CCG$ to $\C(\omega^2+1,\omega+1)$ endowed with the compact-open topology in order to conclude. Assuming this fact for now, notice that for every $G\in \CCG$ we have $\bar{f}^G\rest{\omega^2}=f^G$, so by \cref{l:RestrictionIsReduction}, $G\mapsto \bar{f}^G \mapsto \bar{f}^G\rest{\omega^2}=f^G$ is continuous as a composition of continuous maps, and it is a reduction by \cref{lemEmbIhom,lemCtbleGrIhomEmb,lemIsoCtbleGraphs}. But then by \cref{l:RestrictionIsReduction}, $f^G\emb f^H$ iff $\bar{f}^G\emb \bar{f}^H$ for every $G,H\in \CCG$, and so it follows that $G\mapsto \bar{f}^G$ is also a continuous reduction as desired.
			
			Remains to show that $G\mapsto \bar{f}^G$ is continuous. Fix compatible ultrametrics $d$ on $\omega+1$ and $d_G$ on the space of graphs. Since $\omega^2+1$ is compact, the compact-open topology coincides with the uniform convergence topology. So it suffices to check that for every $\varepsilon>0$ there exists a $\delta>0$ so that whenever we have $d_G(G,H)<\delta$ for graphs $G,H$, then for every $x \in \omega^2+1$ we have $d(\bar{f}^G(x),\bar{f}^{H}(x))<\varepsilon$.
			
To this end, fix $\varepsilon>0$ and notice that the set $\{k \in \omega: d(k,\omega) \geq \varepsilon\}$ is finite and that whenever $k$ and $k'$ do not belong to this set then $d(k,k')<\varepsilon$, as $d$ is an ultrametric. Set
\[
F = \{(m,\bijc{n,p}_0)\mid\exists i \ (d(\bijc{i,\bijc{m,n}_i,p}_2,\omega) \geq \varepsilon)\}.
\] 
Then $F$ is a finite set of isolated points in $\omega^2+1$ and $F'=\{ \{m,n\}\in[\omega]^2 \mid \exists p \ (m,\bijc{n,p}_0)\in F\}$ is a finite set of pairs. Now we can find $\delta$ small enough so that $d_G(G,H)<\delta$ ensures that $G$ and $H$ agree on $F'$. By the definition of $\bar{f}^G$ this easily implies $\bar{f}^G|_F=\bar{f}^{H}|_F$. Moreover if $x \not \in F$, then $d(\bar{f}^G(x),\bar{f}^{H}(x))<\varepsilon$. 
		\end{proof}

		\subsection{The general case}
		
		Building on the previous section, we show that if $X$ is Polish \zerodim{} with infinitely many points then $(\CCG,\ihom)$ continuously reduces to $\C(X,\omega+1)$. To do so we will find a continuous map $\pi:X\to \omega^2+1$ such that $G\mapsto \bar{f}^G\circ \pi$ is the desired continuous reduction.
		
		We start by identifying a sufficient condition for a map $\pi:X\to \omega^2+1$ to yield such a reduction. 
		
		\begin{definition}
			Let us say that a pseudo-embedding $\rho:X\to \omega^2+1$ is \emph{regular} if
			\begin{enumerate}
				\item $|\rho^{-1}(\{\infty\})|\leq 1$\label{regularPE1}
				\item for every $m\in\N$, $|\rho^{-1}(\{(m,\omega)\})|=1$, \label{regularPE2}
				\item for every isolated points $y,y'\in\omega^2+1$, $\rho^{-1}(\{y\})$ embeds into $\rho^{-1}(\{y'\})$. \label{regularPE3}
			\end{enumerate}
		\end{definition}

		\begin{lemma} \label{lem:RegularPE}
			Suppose that $\rho:X\to \omega^2+1$ is a regular pseudo-embedding then for every for every $G,H\in \CCG$ we have \[
			G\ihom H \quad \text{if and only if} \quad \bar{f}^G\circ \rho\emb \bar{f}^H\circ \rho.
			\]
		\end{lemma}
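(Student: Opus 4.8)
The plan is to prove the two implications separately, reducing the forward direction to the reduction $G\mapsto\bar f^G$ already obtained in \cref{p:inaspecialcaseY} together with the lifting \cref{prop:PseudoEmbLiftingEmb}, and treating the backward direction through the function-to-graph map via \cref{lemEmbIhom}.

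For the implication $G\ihom H\Rightarrow\bar f^G\circ\rho\emb\bar f^H\circ\rho$, I would first invoke \cref{p:inaspecialcaseY} to obtain embeddings $\sigma'\colon\omega^2+1\to\omega^2+1$ and $\tau\colon\omega+1\to\omega+1$ with $\tau\circ\bar f^G=\bar f^H\circ\sigma'$. Since embeddings preserve the Cantor--Bendixson derivatives (\cref{fact:onCBrank}), $\sigma'$ fixes the unique rank-$2$ point $\infty$, sends each limit point $(m,\omega)$ to a limit point, and sends isolated points to isolated points. The three defining conditions of a regular pseudo-embedding are then exactly what is needed to verify the hypothesis of \cref{prop:PseudoEmbLiftingEmb}: for $y=\infty$ both $\rho^{-1}(\{\infty\})$ and $\rho^{-1}(\{\sigma'(\infty)\})=\rho^{-1}(\{\infty\})$ have at most one point, for $y=(m,\omega)$ the fibres are singletons, and for isolated $y$ any two isolated fibres embed into one another. \cref{prop:PseudoEmbLiftingEmb} then produces an embedding $\sigma\colon X\to X$ with $\rho\circ\sigma=\sigma'\circ\rho$, whence $\tau\circ(\bar f^G\circ\rho)=(\bar f^H\circ\rho)\circ\sigma$; restricting $\tau$ to $\im(\bar f^G\circ\rho)$ yields the required witness.

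For the converse, suppose $(\sigma,\tau)$ embeds $\bar f^G\circ\rho$ into $\bar f^H\circ\rho$, and write $z_m$ for the unique point of $\rho^{-1}(\{(m,\omega)\})$, so that $m\mapsto z_m$ is injective. The key combinatorial fact I would establish is that, for $m\neq n$, $\{m,n\}\in E^G$ if and only if $\{z_m,z_n\}\in E_{\bar f^G\circ\rho}$: in one direction the edge-witnessing sequences of \cref{lemIsoCtbleGraphs} live among the isolated points of $\omega^2$ and lift through $\rho$ (their fibres being nonempty as the isolated fibres embed into one another), each lift converging to the relevant $z_m$ because $\rho$ is a pseudo-embedding and no lifted sequence is $\rho$-trivial; conversely, any witnessing sequences in $X$ converge to $z_m$ and $z_n$ and cannot be $\rho$-trivial (an injective sequence cannot stay in a singleton fibre, and a clopen isolated fibre cannot accumulate to a $z_m$), so they project through $\rho$ to injective sequences in $\omega^2$ witnessing an edge of $\Gr_{\bar f^G}$, and \cref{lemIsoCtbleGraphs} applies. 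Since $\tau$ fixes the unique limit point $\omega$ of $\omega+1$, the identity $\bar f^H\circ\rho\circ\sigma=\tau\circ\bar f^G\circ\rho$ forces $\bar f^H(\rho(\sigma(z_m)))=\omega$, so $\sigma(z_m)$ lands in $(\bar f^H\circ\rho)^{-1}(\omega)=\{z_{m'}\mid m'\in\N\}\cup\rho^{-1}(\{\infty\})$. Using \cref{lemEmbIhom} to see that $\sigma$ restricts to a homomorphism $\Gr_{\bar f^G\circ\rho}\to\Gr_{\bar f^H\circ\rho}$, together with the equivalence above applied to both $G$ and $H$, the assignment $h(m)=m'$ whenever $\sigma(z_m)=z_{m'}$ is an injective homomorphism $G\to H$ on its domain.

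The main obstacle is precisely the one point left unaccounted for: by the first regularity condition there is at most one $m_0$ with $\sigma(z_{m_0})=w$, where $w$ denotes the unique point of $\rho^{-1}(\{\infty\})$, and for this $m_0$ the recipe for $h$ breaks down. I expect this to be the crux, because $w$ is genuinely a limit of the $z_a$ (as $(a,\omega)\to\infty$ in $\omega^2+1$ and $\rho$ is a pseudo-embedding) and an embedding of $X$ into itself may legitimately raise the Cantor--Bendixson rank, so one cannot simply forbid $z_{m_0}$ from mapping onto $w$. The route I would follow mirrors the use of \cref{l:RestrictionIsReduction} in the previous subsection: pass to $\rho_0=\rho\rest{X\setminus\{w\}}\colon X\setminus\{w\}\to\omega^2$, observe that $\bar f^G\circ\rho$ and $f^G\circ\rho_0$ agree off the single point $w$ and that deleting $w$ removes exactly the vertex over $\infty$ responsible for the failure, and show that embeddability is unaffected by this deletion. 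Working with $f^G\circ\rho_0$ on $\omega^2$ there is no point over $\infty$, the fibre computation now forces $\sigma_0(z_m)\in\{z_{m'}\mid m'\in\N\}$ for every $m$, and $h$ becomes a total injective homomorphism $G\to H$. The delicate point, on which I would spend the most care, is that a witnessing embedding need not fix $w$, so the deletion invariance has to be proved by rerouting the $\sigma$-orbit of $w$ rather than by a plain restriction of $\sigma$.
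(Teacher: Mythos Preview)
Your forward direction follows the paper's line, but one step is misattributed: the claim that $\sigma'$ sends isolated points to isolated points does \emph{not} follow from \cref{fact:onCBrank}. An embedding of $\omega^2+1$ into itself can send an isolated point to some $(k,\omega)$ outside the image of the limit set (for instance, shift every copy of $\omega+1$ to the odd-indexed copies and send $(0,0)$ to the now-isolated point $(0,\omega)$ of the image). Without ``isolated $\mapsto$ isolated'' the hypothesis of \cref{prop:PseudoEmbLiftingEmb} can genuinely fail, since an isolated fibre of $\rho$ may be large while the fibre over a limit point is a singleton. The paper fills this using the specific shape of $\bar{f}^G$: since $y$ is limit in $\omega^2+1$ iff $\bar{f}^G(y)=\omega$, and since $\tau(\omega)=\omega$ (as $\omega$ is the unique limit point of $\im\bar{f}^G$), the equation $\tau\circ\bar{f}^G=\bar{f}^H\circ\sigma'$ yields $y$ isolated iff $\sigma'(y)$ isolated.

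For the backward direction, the obstacle you isolate---that some $z_{m_0}$ might be sent to $w$---is not real, and the detour through deleting $w$ and ``rerouting the $\sigma$-orbit'' is unnecessary. The paper's move is to look at the set $L=\{x\in X:\rho(x)\text{ is limit in }\omega^2+1\}$. First, the same ``$\bar{f}^G\circ\rho(x)=\omega$ iff $x\in L$'' argument (together with $\tau(\omega)=\omega$) shows $x\in L\iff\bar{\sigma}(x)\in L$. Second, because $\rho$ is a regular pseudo-embedding it is \emph{injective} on $L$, so $\rho\rest{L}$ is a genuine embedding; hence $L$ with the subspace topology from $X$ is homeomorphic to $\omega+1$ (when $w$ exists), with $w$ as its unique limit point. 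Now $\bar{\sigma}\rest{L}$ is an embedding of $L$ into itself, so \cref{fact:onCBrank} applied inside $L$ forces $\bar{\sigma}(w)=w$, and injectivity then rules out $\bar{\sigma}(z_m)=w$ for every $m$. Your worry that ``an embedding of $X$ may raise the Cantor--Bendixson rank'' is beside the point: the CB analysis takes place in $L$, not in $X$. From here the paper obtains the injective $\sigma:\N\to\N$ directly and checks the homomorphism condition by a sequence argument close to the one you sketch, without any need to pass to $\rho_0$ or to invoke \cref{lemEmbIhom}.
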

		\begin{proof}
			Let us start by remarking that for every $G\in \CCG$ the definition of the function $\bar{f}^G$ is so that: $y$ is limit in $\omega^2+1$ iff $\bar{f}^G(y)=\omega$.
			
			We start with the forward implication. Assume that $G\ihom H$, so that there exists $(\sigma,\tau)$ which embeds $\bar{f}^G$ into $\bar{f}^H$ by \cref{p:inaspecialcaseY}.
			We need to check that $y$ is isolated in $\omega^2+1$ iff $\sigma(y)$ isolated in $\omega^2+1$. This is not true for any embedding $\sigma$, but as $(\sigma,\tau)$ embeds $\bar{f}^G$ into $\bar{f}^H$ this follows from our first remark and the fact that $\tau(\omega)=\omega$ holds. Moreover, $\sigma$ being an embedding, it must fix the point $\infty$ (see \cref{fact:onCBrank}). It therefore follows from our hypotheses on $\rho$ that $\rho^{-1}(\{y\})$ embeds into $\rho^{-1}(\{\sigma(y)\})$ for all $y\in \omega^2+1$. Since $\rho$ is a pseudo-embedding, it follows from \cref{prop:PseudoEmbLiftingEmb} that there exists some embedding $\bar{\sigma}$ such that $\rho\circ \bar{\sigma}=\sigma\circ \rho$. Hence $(\bar{\sigma},\tau)$ embeds $\bar{f}^G\circ\rho$ into $\bar{f}^H\circ \rho$, as desired.
			
			For the backward implication, assume that $(\bar{\sigma},\tau)$ embeds $\bar{f}^G\circ\rho$ in $\bar{f}^H\circ \rho$ for some $G,H\in \CCG$. Let $L=\{x\in X \mid \text{$\rho(x)$ is limit in $\omega^2+1$}\}$. We start by noticing that $x\in L$ iff $\bar{\sigma}(x)\in L$ for all $x\in X$. By our first remark, we have $x\in L$ iff $\bar{f}^G\circ \rho(x)=\omega$, and similarly for $\bar{f}^H$. So as $(\bar{\sigma},\tau)$ embeds $\bar{f}^G\circ\rho$ into $\bar{f}^H\circ \rho$, we must have $x\in L$ iff $\bar{\sigma}(x)\in L$.
			
			Notice that as $\rho$ is a regular pseudo-embedding, $\rho$ actually restricts to embedding from $L$ into the limit points of $\omega^2+1$. Let $x_{(m,\omega)}\in X$ denote the unique point with $\rho(x_{(m,\omega)})=(m,\omega)$ and let $x_\infty\in X$ be the unique point such that $\rho(x_\infty)=\infty$ (if there exists one at all). The set $L$ is equal to either $\{x_{(m,\omega)}\mid m\in \N\}$ or to $\{x_\infty\}\cup \{x_{(m,\omega)}\mid m\in \N\}$. Since $\sigma$ restricts to an embedding of $L$ into itself, it must fix the point $x_\infty$ if it exists. In both cases $\bar{\sigma}$ induces an injective map $\sigma:\N\to\N$ such that $\bar{\sigma}(x_{(m,\omega)}))=x_{(\sigma(m),\omega)}$ for every $m$.

			Next we prove that this injective map $\sigma:\N\to\N$ is a homomorphism from $G$ to $H$. Assume that $n \mathrel{E^G} m$.
			We claim that whenever $(x_k)_k$ is an injective sequence of isolated points in $\omega^2+1$ converging to $(n,\omega)$, then by possibly going to a subsequence there exists a sequence $(x'_k)_k$ in $X$ with $\rho(x'_k)=x_k$ for all $k$ and such that $\rho(\bar{\sigma}(x'_k))_k$ is injective and converges to $(\sigma(n),\omega)$.
			To see this pick any sequence $(x'_k)_k$ in $X$ with $\rho(x'_k)=x_k$. Since $\rho$ is a pseudo-embedding and $(x'_k)$ is not $\rho$-trivial, we have $x'_k\to x_{(n,\omega)}$. Now since $\rho(x'_k)=x_k$ is isolated in $\omega^2+1$ so is $\rho(\bar{\sigma}(x'_k))$ and by continuity the sequence $\rho(\bar{\sigma}(x'_k))_k$ converges to the limit point $(\sigma(n),\omega)$. Therefore by possibly going to a subsequence, we can assume that $\rho(\bar{\sigma}(x'_k))_k$ is injective, which proves the claim.
			
			Since $n \mathrel{E^G} m$, by definition of
			$f^G$ there are injective sequences of isolated points $(x_k)_k$ and $(y_k)_k$ with $x_k\to (n,\omega)$, $y_k\to (m,\omega)$ and $\bar{f}^G(x_k)=\bar{f}^G(x_k)$ for all $k$. By possibly going simultaneously to subsequences, we use the claim to pick $(x'_k)_k$ and $(y'_k)_k$ in $X$ such that $\rho(x'_k)=x_k$, $\rho(y'_k)=y_k$, and setting $x''_k=\rho\circ\bar{\sigma}(x'_k)$ and $y''_k=\rho\circ\bar{\sigma}(y'_k)_k$, $(x''_k)_k$ and $(y''_k)_k$ are injective and converge to $(\sigma(n),\omega)$ and $(\sigma(m),\omega)$ respectively. Moreover for all $k$,
			\[
			f^H(x''_k)=\tau \circ \bar{f}^G\circ \rho(x'_k)=\tau \circ \bar{f}^G\circ \rho(y'_k)=f^H(y''_k).
			\]
			Therefore using \cref{lemIsoCtbleGraphs} we see that the two sequences $(x''_k)_k$ and $(y''_k)_k$ together witness that $\sigma(n) \mathrel{E^H} \sigma(m)$ holds.
		\end{proof}
		
		If $(C_n)_n$ is a sequence of mutually disjoint clopen sets in a space $X$ and $x\in X$, we write $C_n\to x$ if every neighborhood of $X$ contains $C_n$ for all but finitely many $n$. Notice that in this case, we have $x\notin C_n$ for all $n$, the set $F=\{x\} \cup \bigcup_n C_n$ is closed in $X$ and the map $\rho: F\to \omega+1$ given by $\rho(y)=n$ if $y\in C_n$ and $\rho(x)=\omega$ is a pseudo-embedding.
		
		\begin{lemma} \label{lem:UnctbleExistRegPE}
			Let $X$ be an uncountable Polish \zerodim{} space. Then there exists a regular pseudo-embedding $\rho:X\to\omega^2+1$.
		\end{lemma}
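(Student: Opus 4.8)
The plan is to manufacture $\rho$ from a two-level ``grid'' decomposition of $X$ that mirrors the structure of $\omega^2+1=\bigsqcup_m(\omega+1)\cup\{\infty\}$. Concretely, I would look for a point $x_\infty\in X$, a sequence of points $(x_m)_m$, and a family of clopen sets $(C^m_n)_{m,n}$ such that
\[
X = \{x_\infty\}\sqcup\bigsqcup_m\Big(\{x_m\}\sqcup\bigsqcup_n C^m_n\Big),
\]
with, writing $D_m=\{x_m\}\cup\bigcup_n C^m_n$, the convergences $C^m_n\to x_m$ as $n\to\infty$ and $D_m\to x_\infty$ as $m\to\infty$ in the sense recalled just before the statement, and with every $C^m_n$ uncountable. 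Given such a decomposition I would define $\rho$ by $x_\infty\mapsto\infty$, $x_m\mapsto(m,\omega)$, and $\rho$ constantly equal to $(m,n)$ on $C^m_n$. The first two regularity clauses are then immediate, since $\rho^{-1}(\infty)=\{x_\infty\}$ and $\rho^{-1}(\{(m,\omega)\})=\{x_m\}$.

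The point of insisting that each $C^m_n$ be uncountable is exactly to secure the third regularity clause for free: an uncountable clopen piece $C^m_n$ is an uncountable Polish \zerodim{} space, so on one hand it embeds into $2^\omega$ (every separable metrizable \zerodim{} space does, via a countable clopen basis), and on the other hand it contains a copy of $2^\omega$ by the perfect set property; composing, any two fibers over isolated points are mutually embeddable. For the pseudo-embedding property, continuity follows from the two convergences exactly as in the observation preceding the statement (each $D_m$ and each $C^m_n$ is clopen, and the preimage of a basic neighborhood of a limit point or of $\infty$ is clopen). The reflection clause is only non-vacuous when the limit $x$ satisfies $\rho(x)=(m,\omega)$ or $\rho(x)=\infty$, since convergence to an isolated point makes the sequence $\rho$-trivial; in the first case $x=x_m$ and $\rho(x_k)\to(m,\omega)$ forces the column to be eventually $m$ with rows tending to $\infty$, so the $x_k$ eventually lie in $C^m_n$ with $n\to\infty$ and hence converge to $x_m$, while in the second case $x=x_\infty$ and $\rho(x_k)\to\infty$ forces the column index to tend to $\infty$, so the $x_k$ lie in $D_m$ with $m\to\infty$ and hence converge to $x_\infty$.

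It remains to build the decomposition, and here the engine is the perfect kernel. Let $P$ be the perfect kernel of $X$, i.e.\ its last Cantor--Bendixson derivative; as $X$ is uncountable, $P\neq\emptyset$ and every neighborhood of a point of $P$ is uncountable. I would first fix $x_\infty\in P$ and construct a decreasing clopen neighborhood basis $U_0\supseteq U_1\supseteq\cdots$ of $x_\infty$ with $\bigcap_m U_m=\{x_\infty\}$ and each difference $U_m\setminus U_{m+1}$ uncountable. This is possible because, $x_\infty$ being a condensation point, any clopen neighborhood $U$ of $x_\infty$ contains a point $y\neq x_\infty$ of $P$, hence an uncountable clopen set $A$ with $x_\infty\notin A$, and one then takes the next neighborhood inside $U\setminus A$ of small diameter. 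Setting $D_0=X\setminus U_1$ and $D_m=U_m\setminus U_{m+1}$ for $m\geq1$ yields clopen uncountable columns with $D_m\to x_\infty$ and $\{x_\infty\}\sqcup\bigsqcup_m D_m=X$. Each $D_m$ is again an uncountable Polish \zerodim{} space, so I would recurse verbatim inside it: pick $x_m$ in its perfect kernel and produce clopen uncountable sets $C^m_n\to x_m$ partitioning $D_m\setminus\{x_m\}$.

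The main obstacle is to meet clause (3) of regularity while still exhausting all of $X$ and preserving the convergences, and these pull in different directions: $X$ may carry isolated points or otherwise ``spread out'' material that sits far from $x_m$, yet $C^m_n\to x_m$ forbids all but finitely many fibers from reaching outside a fixed neighborhood of $x_m$. The resolution, already visible above, is twofold. First, the uniformity demanded by (3) is reduced to mere uncountability of the fibers, which is cheap to guarantee. Second, all the far material of a column $D_m$ is swept into the single zeroth piece $C^m_0=D_m\setminus V^m_1$, where $V^m_1$ is the first small neighborhood of $x_m$, so that the tail $(C^m_n)_{n\geq1}$, being contained in shrinking neighborhoods of $x_m$, still converges to $x_m$. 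The last routine point is to check that this absorbing piece can always be taken uncountable, again by peeling an uncountable clopen chunk off $D_m$ away from $x_m$ before choosing $V^m_1$.
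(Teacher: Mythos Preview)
Your proposal is correct and follows essentially the same approach as the paper: pick a condensation point $x_\infty$ (you phrase it via the perfect kernel, which is the same set in a Polish space), build a two-level grid of uncountable clopen pieces shrinking to the chosen limit points, define $\rho$ as constant on each piece, and secure regularity clause~(3) via the mutual embeddability of uncountable \zerodim{} Polish spaces through $2^\omega$. The paper's write-up is slightly terser (it passes to a subsequence of a fixed neighborhood basis rather than building the $U_m$ directly, and it leaves the pseudo-embedding verification as a one-line observation), but the construction and the key ideas are identical.
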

		\begin{proof}
			Fix some compatible metric for $X$ and recall that a point $x\in X$ is a \emph{condensation point} of $X$ if every neighborhood of $x$ is uncountable. Remember that $X$ is uncountable iff the set of condensation points of $X$ is uncountable (\cite[6.4, p.32]{kechris2012classical}). We fix some condensation point $x_\infty$ of $X$ and define a sequence $(C_m)_{m\in\omega}$ of  pairwise disjoint clopen sets such that
			\begin{enumerate}
				\item each $C_m$ is uncountable,
				\item $C_m\to x_\infty$, and
				\item $X=\{x_\infty\}\cup \bigcup_m C_m$.
			\end{enumerate}
			To this end, let $(V_n)_n$ be a decreasing sequence of clopen neighborhoods of $x_\infty$ with vanishing diameter and $V_0=X$. 
We claim that there exists a strictly increasing sequence $(n_i)_i$ of indices such that the clopen sets given by $C_0=X\setminus V_{n_0}$ and $C_{i+1}=V_{n_i} \setminus V_{n_{i+1}}$, for $i\geq 0$, are all uncountable. This sequence $(C_m)$ is clearly as desired. To see this, observe that there exists a condensation point $y\in V_0$ such that $y\neq x_\infty$ and so $y\notin V_{n_0}$ for some large enough $n_0$. Then $X\setminus V_{n_0}$ is uncountable as a clopen neighbourhood of the condensation point $y$. Next if $n_i$ is defined we can apply the same argument inside $V_{n_i}$ to get some $n_{i+1}>n_{i}$ such that $V_{n_i}\setminus V_{n_{i+1}}$ is uncountable, which completes the induction.

			We then pick a condensation point $x_{(m,\omega)}$ in $C_m$ for every $m$, and apply the same construction to $C_m$ and $x_{(m,\omega)}$ as we did above for $X$ and $x_\infty$. Hence we get for every $m$ a sequence $(C_{m,n})_n$ of uncountable clopen sets that partitions $C_m\setminus\{x_{(m,\omega)}\}$ and satisfies $C_{m,n}\to x_{(m,\omega)}$.
			
			We define $\rho:X\to \omega^2+1$ by $\rho(x_\infty)=\infty$, $\rho(x_{(m,\omega)})=(m,\omega)$ and $\rho(x)=(m,n)$ if $x\in C_{m,n}$. Observe that whenever $(x_n)_n$ in $X$ is not $\rho$-trivial, then $\rho(x_n)_n$ can only converge to some limit point $y$ of $\omega^2+1$ and by construction this happens if and only $x_n\to x$ for the unique point $x$ with $\rho(x)=y$. Hence $\rho$ is a pseudo-embedding. To see it is regular, notice that it clearly satisfies \cref{regularPE1} and \cref{regularPE2}. To see condition \cref{regularPE3}, remember that every \zerodim{} Polish space embeds into $2^\omega$ (see \cite[(7.8)]{kechris2012classical}) while  $2^\omega$ embeds into every uncountable Polish space \cite[(6.5)]{kechris2012classical}.
		\end{proof}
		
		\begin{lemma}\label{lem:CtbleExistRegPE}
			Let $X$ be a countable \zerodim{} space with infinitely many limit points. We can write $X$ as the union of two closed sets $F_0$ and $F_1$ with the following properties:
			\begin{enumerate}
				\item there is a regular pseudo-embedding $\rho:F_0\to\omega^2+1$, \label{lem:CtbleExistRegPE1}
				\item $F_0\cap F_1=\rho^{-1}(\{\infty\})$. \label{lem:CtbleExistRegPE2}
			\end{enumerate}
		\end{lemma}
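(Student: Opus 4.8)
The plan is to build $F_0$ as a copy, up to collapsing clopen pieces, of either $\omega^2$ or $\omega^2+1$ sitting inside $X$, and to let $F_1=X\setminus F_0$ (enlarged by the apex if there is one) absorb everything else. The guiding constraint is condition \cref{regularPE3}: the fibres $\rho^{-1}(\{y\})$ over isolated points $y$ must be pairwise bi-embeddable, and the cleanest way to force this is to arrange that they are \emph{all} homeomorphic to one fixed space. Since a countable \zerodim{} Polish space is scattered, its limit points form the closed set $\CB_1(X)$, and \cref{prop:simpleCB2} applies locally: if a limit point $x$ has a clopen neighbourhood in which it is the only limit point, then small enough such neighbourhoods are homeomorphic to $\omega+1$ if $X$ is locally compact at $x$, and to $[\omega]^2_+$ otherwise. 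Call $x$ of \emph{compact} or of \emph{non-compact} type accordingly. In the first case the isolated points of the neighbourhood form a sequence converging to $x$, which I partition into singletons; in the second I use the infinite clopen shells $V_n\to x$ from the proof of \cref{prop:simpleCB2}. In either case, sending $x$ to a limit point and each piece of the partition to a distinct isolated point produces a pseudo-embedding onto $\omega+1$ whose isolated fibres are \emph{all} singletons, respectively \emph{all} copies of $\N$.

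Case $\CB_2(X)=\emptyset$: here $\CB_1(X)$ is an infinite closed discrete set, so by pigeonhole infinitely many limit points $\{x^m\}_{m\in\N}$ share a single type. As $\CB_1(X)$ is closed discrete each distance $d\bigl(x^m,\CB_1(X)\setminus\{x^m\}\bigr)$ is positive, and choosing clopen $U_m\ni x^m$ of the appropriate pure type inside the ball of radius $\tfrac13 d\bigl(x^m,\CB_1(X)\setminus\{x^m\}\bigr)$ makes the $U_m$ pairwise disjoint with $\bigcup_m U_m$ clopen and, in fact, a topological sum, the uniform separation forcing every convergent sequence to be eventually inside a single $U_m$. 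I then set $F_0=\bigsqcup_m U_m$, take $\rho^{-1}(\{\infty\})=\emptyset$, and let $\rho$ send $U_m$ onto the column $\{m\}\times(\omega+1)$ as above; then $F_1=X\setminus F_0$ is clopen, $F_0\cap F_1=\emptyset=\rho^{-1}(\{\infty\})$, and $\rho$ is a regular pseudo-embedding.

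Case $\CB_2(X)\neq\emptyset$: fix $x_\infty\in\CB_2(X)$ and limit points $x^m\to x_\infty$ with $x^m\neq x_\infty$; passing to a subsequence, all $x^m$ have the same type. Since $\{x^m\}_m\cup\{x_\infty\}$ is homeomorphic to $\omega+1$, the radii $\varepsilon_m=\tfrac13 d\bigl(x^m,(\{x^{m'}\}_{m'}\cup\{x_\infty\})\setminus\{x^m\}\bigr)$ are positive and tend to $0$, so clopen sets $U_m\ni x^m$ of the pure type chosen inside $B(x^m,\varepsilon_m)$ are pairwise disjoint, avoid $x_\infty$, and satisfy $U_m\to x_\infty$. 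Consequently $F_0=\{x_\infty\}\cup\bigsqcup_m U_m$ is closed with $F_0\setminus\{x_\infty\}=\bigcup_m U_m$ open, so that $F_1=(X\setminus F_0)\cup\{x_\infty\}$ is again closed with $F_0\cap F_1=\{x_\infty\}$. Setting $\rho(x_\infty)=\infty$ and mapping each $U_m$ onto $\{m\}\times(\omega+1)$ gives a map onto $\omega^2+1$ for which $\rho^{-1}(\{\infty\})=\{x_\infty\}$ and $\rho^{-1}(\{(m,\omega)\})=\{x^m\}$, so \cref{regularPE1,regularPE2} hold, while \cref{regularPE3} holds because all isolated fibres are of the single fixed homeomorphism type; the point $x_\infty$ being a limit of limit points (as guaranteed by $x_\infty\in\CB_2(X)$, cf. \cref{fact:onCBrank}) is exactly what lets $\infty$ have a one-point fibre.

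The main obstacle is precisely the interaction between the two types of limit points and the verification that $\rho$ is a genuine pseudo-embedding. The tension is unavoidable: a neighbourhood $\cong\omega+1$ cannot be partitioned into infinite clopen pieces, while a neighbourhood $\cong[\omega]^2_+$ cannot be exhausted by singleton pieces converging to its limit point, so one cannot mix types without breaking \cref{regularPE3}; the pigeonhole step selecting a uniform type is what resolves this. The remaining technical heart is checking the pseudo-embedding condition for non-$\rho$-trivial sequences that spread across infinitely many $U_m$: one must confirm that in the first case such a sequence fails to converge on both sides, and in the second case that it converges to $x_\infty$ exactly when its image converges to $\infty$ — which is where the discreteness of the family $\{U_m\}$ and the matching of $U_m\to x_\infty$ with $(m,\omega)\to\infty$ in $\omega^2+1$ are used.
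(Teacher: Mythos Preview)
Your approach is essentially the same as the paper's: select an injective sequence of limit points that are isolated in $\CB_1(X)$, pass to a subsequence so that the associated small clopen neighbourhoods are all of the same type ($\omega+1$ or $[\omega]^2_+$), and glue the resulting column maps into a regular pseudo-embedding onto $\omega^2$ or $\omega^2+1$. The paper organizes the two cases by whether the chosen sequence has a convergent subsequence, you organize them by whether $\CB_2(X)$ is empty; both splits work.

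One small point to tighten in your second case: when you pick limit points $x^m\to x_\infty$ with $x_\infty\in\CB_2(X)$, you need the $x^m$ to be \emph{isolated} in $\CB_1(X)$, since otherwise your ``type'' is undefined and no clopen $U_m\ni x^m$ of pure type exists. Your radii $\varepsilon_m=\tfrac13 d\bigl(x^m,(\{x^{m'}\}_{m'}\cup\{x_\infty\})\setminus\{x^m\}\bigr)$ only separate the $x^m$ from one another and from $x_\infty$, not from other points of $\CB_1(X)$. The fix is immediate: $\CB_1(X)$ is a non-empty countable Polish space, so its isolated points are dense, and you may choose the sequence $(x^m)_m$ among them from the start (this is exactly what the paper does).
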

		
		\begin{proof}
			We start by making the following remark. If $x$ is a limit point which is isolated in $\CB_1(X)$, then for every neighborhood $U$ of $x$ there exists a clopen set $C$ with $x\in C \subseteq U$, $C\cap \CB_1(X)=\{x\}$ and such that $C$ is homeomorphic to either $\omega+1$ or $[\omega]^2_+$. To see the last point, notice that if $C$ is clopen with $C\cap\CB_1(X)=\{x\}$ then $C$ is a Polish space with a single limit point and by \cref{prop:simpleCB2} it is homeomorphic to one of the three spaces $\omega\sqcup \omega+1$, $\omega+1$, $[\omega]^2_+$, and by shrinking $C$ we can actually make sure it is homeomorphic to either $\omega+1$ or $[\omega]^2_+$.
			
			Since $X$ is countable with infinitely limit points, $\CB_1(X)$ is infinite and countable. It follows that there are infinitely many isolated points in $\CB_1(X)$ and we can take an injective sequence $(x_n)_n$ of isolated points in $\CB_1(X)$. By going to a subsequence, we can assume that either $(x_n)_n$ converges to some $x_\infty$ in $X$, or that no subsequence of $(x_n)_n$ converges in $X$. 
			
			First suppose that no subsequence of $(x_n)_n$ converges in $X$. Consider mutually disjoint clopen neighborhoods $U_n$ of each point $x_n$. 
			By applying our first remark and possibly going to a subsequence, we get a sequence $(C_n)_n$ of mutually disjoint clopen sets with $C_n\cap \CB_1(X)=\{x_n\}$ and either $C_n\cong \omega+1$ for all $n$, or $C_n\cong [\omega]^2_+$ for all $n$. We set $F_0=\bigcup_n C_n$ and $F_1=X\setminus \bigcup_n C_n$. Since no subsequence of $(x_n)_n$ converges in $X$, it follows that $F_0$ is also closed. We define $\rho:F_0\to \omega^2+1$ as follows. In the first case, we identify $F_0$ with $\bigsqcup_n \omega+1$ and take $\rho$ to be the inclusion into $\omega^2+1$. In the second case, notice that $\pi:[\omega]^2_+\to \omega+1$ given by $\pi(\{k,l\})=k$ and $\pi(\emptyset)=\omega$ is a regular pseudo-embedding. So identifying $F_0$ with $\bigsqcup_n [\omega]^2_+$ we simply let $\rho=\bigsqcup_n \pi$. 
			
			Suppose now that $x_n\to x_\infty$. For some compatible metric on $X$ take mutually disjoint clopen neighborhoods $U_n$ of each $x_n$   whose diameters converges to $0$. This last condition ensures that $U_n \to x_\infty$. By applying our first remark and possibly going to a subsequence, we can find a sequence $(C_n)_n$ of clopen sets with $C_n\subseteq U_n$, $C_n\cap \CB_1(X)=\{x_n\}$ and either $C_n\cong \omega+1$ for all $n$, or $C_n\cong [\omega]^2_+$ for all $n$. We set $F_0=\{x_\infty\}\cup\bigcup_n C_n$ and $F_1=X\setminus \bigcup_n C_n$. Notice that $F_0$ is closed since each $C_n$ is clopen and $C_n \to x_\infty$. In the first case, we can take $\rho$ to be a homeomorphism from $F_0$ to $\omega^2+1$. In the second case, let $h_n:C_n \to [\omega]^2_+$ be a homeomorphism. We define $\rho:F_0\to\omega^2+1$ by $\rho(x)=(n,\pi\circ h_n(x))$ if $x\in C_n$ and $\rho(x_\infty)=\infty$.
			
			In all cases, the proof that $\rho$ is a pseudo-embedding is just as in \cref{lem:UnctbleExistRegPE}. Whenever $(x_n)_n$ in $F_0$ is not $\rho$-trivial, then $\rho(x_n)_n$ can only converge to some limit point $y$ of $\omega^2+1$ and, by construction, this happens if and only $x_n\to x$ for the unique point $x$ in $F_0$ with $\rho(x)=y$. Moreover, by construction $\rho$ is regular in all cases.
		\end{proof}

		We are now ready to prove the main result of this section.
		
		\begin{theorem}\label{t:thechaossideYY}
			If $X$ is a Polish \zerodim{} space with infinitely many limit points and $Y$ is not discrete, then $(\CCG,\ihom)$ reduces continuously to $(\C(X,Y),\emb)$.
		\end{theorem}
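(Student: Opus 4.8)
The plan is to first reduce to the target space $\C(X,\omega+1)$: since $Y$ is not discrete exactly when $\omega+1$ embeds into $Y$, \cref{cor:embeddingrangesideY} shows it suffices to build a continuous reduction of $(\CCG,\ihom)$ into $(\C(X,\omega+1),\emb)$. I would then split according to the cardinality of $X$. When $X$ is uncountable, \cref{lem:UnctbleExistRegPE} provides a regular pseudo-embedding $\rho\colon X\to\omega^2+1$, and I would simply set $\Phi(G)=\bar{f}^G\circ\rho$. That this is a reduction is exactly \cref{lem:RegularPE}, and continuity follows by writing $\Phi$ as the composite of the map $G\mapsto\bar{f}^G$, continuous by \cref{p:inaspecialcaseY}, with the precomposition map $h\mapsto h\circ\rho$ from $\C(\omega^2+1,\omega+1)$ to $\C(X,\omega+1)$, which is continuous for the compact-open topology because $\rho(K)$ is compact whenever $K$ is, so that $S_{X,\omega+1}(K,U)$ pulls back to $S_{\omega^2+1,\omega+1}(\rho(K),U)$. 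This case is therefore immediate from the earlier lemmas.

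The substance is the countable case. Here \cref{lem:CtbleExistRegPE} only gives a regular pseudo-embedding $\rho\colon F_0\to\omega^2+1$ on a closed piece $F_0$, with $X=F_0\cup F_1$, $F_1$ closed, and $F_0\cap F_1=\rho^{-1}(\{\infty\})$ a single point $z$ (or empty). This wedge shape is exactly what \cref{LemWedgeEmb} is designed for. I would fix two embeddings $e_0,e_1\colon\omega+1\to\omega+1$ whose images of the isolated points are disjoint (say onto the even and the odd numbers) and which both fix $\omega$, fix one continuous $\psi\colon F_1\to\omega+1$ with $\psi(z)=\omega$, and define $g^G$ on $X$ by gluing $e_0\circ\bar{f}^G\circ\rho$ on $F_0$ with $e_1\circ\psi$ on $F_1$. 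The two agree at $z$ (both equal $\omega$), so $g^G$ is continuous, and $G\mapsto g^G$ is continuous just as above; in the disjoint case $F_0\cap F_1=\emptyset$ the two sets are clopen and everything simplifies.

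For the forward implication $G\ihom H\Rightarrow g^G\emb g^H$ I would apply \cref{lem:RegularPE} on $F_0$ to obtain an embedding $\bar\sigma$ of $F_0$ together with its range map $\tau_0$; since any embedding of $F_0$ must fix $\rho^{-1}(\{\infty\})$, we have $\bar\sigma(z)=z$, so gluing $\bar\sigma$ with $\mathrm{id}_{F_1}$ via \cref{LemWedgeEmb} yields an embedding $\sigma$ of $X$. The even/odd separation then lets me glue $e_0\tau_0 e_0^{-1}$ with the identity into a single range embedding $\tau$, since the two only have to agree at $\omega$, and one checks $(\sigma,\tau)$ witnesses $g^G\emb g^H$.

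The hard part is the backward implication: from $g^G\emb g^H$ recover $G\ihom H$. The difficulty is rigidity, namely that $\tau$ is only a topological embedding of images and need not respect the even/odd partition, so one cannot conclude outright that the witnessing $\sigma$ carries $F_0$ into $F_0$. What the separation does buy is that no edge of $\Gr_{g^G}$ joins a limit point of $F_0$ to one of $F_1$ and that no edge is witnessed by the fibre $(g^G)^{-1}(\omega)$, so the copy of $G$ sitting on the coding vertices $\{x_{(m,\omega)}:m\in\N\}$ is recovered as an induced subgraph. I would then combine \cref{lemEmbIhom} with the Cantor--Bendixson analysis of \cref{fact:onCBrank}: the coding vertices are precisely the value-$\omega$ points that are isolated among value-$\omega$ points, while $z$ is the unique non-isolated such point, and this should pin down the image of the coding vertices under $\sigma$ well enough (up to the harmless freedom coming from graph-isolated vertices) to produce an injective homomorphism $G\to H$, invoking \cref{lem:RegularPE} on $F_0$ once $\sigma$ is seen to embed $\bar{f}^G\circ\rho$ into $\bar{f}^H\circ\rho$. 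I expect this rigidity step, controlling how an arbitrary embedding might push a coding vertex into the higher Cantor--Bendixson rank material coming from $F_1$, to be the main obstacle, and the reason the construction both reserves disjoint colours and isolates $z$ as the sole point of $\rho^{-1}(\{\infty\})$.
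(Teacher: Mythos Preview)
Your treatment of the uncountable case is correct and matches the paper exactly: take $\pi=\rho$ from \cref{lem:UnctbleExistRegPE}, show continuity of $f\mapsto f\circ\pi$ via the compact-open topology, and conclude with \cref{lem:RegularPE}.

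In the countable case, however, you have overcomplicated the construction and thereby created the very rigidity problem you then struggle with. The paper does something much simpler: it extends $\rho\colon F_0\to\omega^2+1$ to a continuous map $\pi\colon X\to\omega^2+1$ by sending \emph{all} of $F_1$ to the single point $\infty$, and then takes $G\mapsto\bar f^G\circ\pi$. There is no need for the auxiliary embeddings $e_0,e_1$ or the map $\psi$; in effect this amounts to choosing $\psi$ constant equal to $\omega$, after which the even/odd colouring is idle.

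With this choice the backward direction becomes a two-line argument. Since $\bar f^G(y)=\omega$ exactly when $y$ is a limit point of $\omega^2+1$, the set $I=\{x\in X:\bar f^G\circ\pi(x)\neq\omega\}$ equals $\{x\in F_0:\rho(x)\text{ is isolated}\}$ and does not depend on $G$. Any range embedding $\tau$ of a subset of $\omega+1$ containing $\omega$ must fix $\omega$, so a domain embedding $\sigma$ witnessing $\bar f^G\circ\pi\emb\bar f^H\circ\pi$ satisfies $x\in I\iff\sigma(x)\in I$. Because $\rho$ is a regular pseudo-embedding one checks directly that $\overline{I}=F_0$; hence $\sigma(F_0)\subseteq F_0$, and \cref{lem:RegularPE} applied to $\sigma|_{F_0}$ yields $G\ihom H$. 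No graph-theoretic or Cantor--Bendixson analysis of $X$ is required.

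By contrast, your proposed characterisation of the coding vertices as ``value-$\omega$ points that are isolated among value-$\omega$ points'' is not correct for a general $\psi$: the preimage $\psi^{-1}(\omega)\subseteq F_1$ may contribute further such points, and since $F_1$ is an arbitrary closed piece of a countable Polish space you have no control over its Cantor--Bendixson structure. So as written your backward argument has a genuine gap, and the obstacle you anticipate is real for your construction. It simply evaporates once you collapse $F_1$ to $\infty$ as the paper does.
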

		\begin{proof}
			As $Y$ is not discrete, $\omega+1$ embeds into $Y$ and so by \cref{cor:embeddingrangesideY} it is enough to show the result for $Y=\omega+1$.
			Our continuous reduction $\G \to \C(X,\omega+1)$ is given by $G\mapsto \bar{f}^G\circ \pi$ for some well chosen $\pi:X\to\omega^2+1$. We start by noticing that as long as $\pi$ is continuous, then the map $\C(\omega^2+1,\omega+1)\to \C(X,\omega+1)$, $f\mapsto f\circ \pi$ is continuous. Suppose that $K\subseteq X$ is compact and $U\subseteq \omega+1$ is open. Then the set of $f\in \C(\omega^2+1,\omega+1)$ such that $f(\pi(K))\subseteq U$ is open in $\C(\omega^2+1,\omega+1)$, since $\pi(K)$ is compact. Since we proved that $G\mapsto \bar{f}^G$ is continuous in \cref{p:inaspecialcaseY}, the map $G \mapsto \bar{f}^G\circ \pi $ is a continuous from $\G$ to $\C(X,\omega+1)$.
			
			In case $X$ is uncountable we simply take $\pi$ to be the regular pseudo-embedding $\rho$ given by \cref{lem:UnctbleExistRegPE} and it follows from \cref{lem:RegularPE} that $G\ihom H$ iff $\bar{f}^G\circ \pi \emb \bar{f}^H\circ \pi$. So we indeed have a reduction in this case.
			
			In case $X$ is countable, \cref{lem:CtbleExistRegPE} allows us to write $X$ as the union of two closed sets $F_0$ and $F_1$ and grants us with the existence of a regular pseudo-embedding $\rho:F_0\to \omega^2+1$. Moreover either $F_0\cap F_1=\emptyset$ or $F_0\cap F_1=\{x_\infty\}$ for the unique point of $F_0$ such that $\rho(x_\infty)=\infty$. We take $\pi:X\to \omega^2+1$ to be the extension of $\rho$ which sends every point in $F_1$ to $\infty$.
			Notice that $\pi$ is continuous: if $F_0$ and $F_1$ are disjoint, $\pi$ is continuous as the topological sum of two continuous functions. In the other case, the proof is similar to the on we gave for \cref{LemWedgeEmb}.

			Let us check that this $\pi$ indeed yields a reduction from $\ihom$ to $\emb$. Suppose that $G\ihom H$. By \cref{lem:RegularPE}, there is some embedding $\bar{\sigma}:F_0\to F_0$ which embeds $\bar{f}^G\circ \rho$ into $\bar{f}^H\circ \rho$. Then we can extend $\bar{\sigma}$ to an embedding $\sigma$ of the whole space $X$ by letting $\sigma$ be the identity on $F_1$ (using \cref{LemWedgeEmb} in case $F_0$ and $F_1$ are not disjoint). Clearly $\sigma$ embeds $\bar{f}^G\circ \pi$ into $\bar{f}^H\circ \pi$, as desired.
			
			Conversely, assume that $\sigma:X\to X$ embeds $\bar{f}^G\circ \pi$ into $\bar{f}^H\circ \pi$. We claim that $\sigma$ restricts to an embedding $\bar{\sigma}:F_0 \to F_0$. This clearly implies that $\bar{\sigma}$ embeds $\bar{f}^G\circ \rho$ into $\bar{f}^H\circ \rho$, and so $G\ihom H$ by \cref{lem:RegularPE}. To see the claim, notice that for $I=\{x\in X \mid \text{$\rho(x)$ is isolated in $\omega^2+1$}\}$ we have $x\in I$ iff $\sigma(x)\in I$. Moreover, $F_0$ is equal to the closure of $I$. It follows that $x\in F_0$ iff $\sigma(x)\in F_0$, as desired.
		\end{proof}
		
		\begin{remark}\label{remContinuousInTheCodes}
				As we have seen, the space $\C(X,Y)$ is not Polish in general. There is however another way of looking at the reductions given in \cref{t:thechaossideYY} by considering continuous \textit{partial} functions in the codes. As $X$ and $Y$ are Polish \zerodim{}, they are homeomorphic to the spaces of infinite branch through pruned trees $S$ and $T$ respectively. Then every partial continuous function $\varphi^*$ from $X$ to $Y$ with Polish domain is coded by some monotone function $\varphi:S\to T$ \cite[(2.6)]{kechris2012classical}. The space of monotone functions from $S$ to $T$ is a closed subset of the Polish space $T^S$, hence Polish. Note that the continuous total functions form a coanalytic subset of this space. It is not hard to see that we can define in a continuous fashion for every $G\in \CCG$ a monotone map $\varphi^G:S \to T$ which codes the  continuous function $:X\to Y$ given for $G$ by our reduction. 
			\end{remark}

			\begin{corollary}
				Let $X$ and $Y$ be Polish \zerodim{} with $X$ compact. If $X$ has infinitely many and $Y$ is not discrete, then the restriction of the quasi-order of embeddability to $\C(X,Y)$ is analytic complete.
			\end{corollary}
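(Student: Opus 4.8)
The plan is to deduce the corollary directly by combining the three results already in hand, namely \cref{EmbedIsAnalytic}, \cref{t:thechaossideYY}, and \cref{LouveauRosendal}; since the continuous reduction of \cref{t:thechaossideYY} has already done the substantive work, what remains is essentially bookkeeping. Recall that a quasi-order on a Polish space is analytic complete precisely when it is analytic and every analytic quasi-order on a Polish space Borel reduces to it, so I would verify these two halves separately.

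First I would record that $\C(X,Y)$ is a genuine Polish space: since $X$ is compact, the compact-open topology coincides with the uniform convergence topology and makes $\C(X,Y)$ Polish (as recalled in the discussion preceding \cref{EmbedIsAnalytic}). This is what allows one to even speak of $(\C(X,Y),\emb)$ as a quasi-order on a Polish space. The analyticity half is then immediate: by \cref{EmbedIsAnalytic}, applied with $X$ compact Polish and $Y$ Polish, the relation $\emb$ is an analytic subset of $\C(X,Y)\times\C(X,Y)$.

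For the completeness half, I would use that under the present hypotheses ($X$ has infinitely many limit points and $Y$ is not discrete) the assumptions of \cref{t:thechaossideYY} are met, so there is a continuous reduction from $(\CCG,\ihom)$ to $(\C(X,Y),\emb)$; in particular this is a Borel reduction. On the other hand, \cref{LouveauRosendal} asserts that $(\CCG,\ihom)$ is analytic complete, so every analytic quasi-order $P$ on a Polish space Borel reduces to $(\CCG,\ihom)$. Composing these two maps yields a Borel reduction of $P$ to $(\C(X,Y),\emb)$, using that a composition of Borel maps is Borel and that the defining biconditional of a reduction is preserved under composition.

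Putting the two halves together gives that $(\C(X,Y),\emb)$ is analytic and that every analytic quasi-order Borel reduces to it, which is exactly analytic completeness. I do not expect any genuine obstacle here: the argument is a routine transitivity of Borel reducibility. The only points deserving a line of care are confirming that $\C(X,Y)$ is Polish so that the notion of analytic completeness applies, and noting that continuous reductions are in particular Borel and that Borel reductions compose.
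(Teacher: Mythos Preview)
Your proposal is correct and follows exactly the paper's own proof: it invokes \cref{EmbedIsAnalytic} for analyticity, \cref{t:thechaossideYY} for the continuous reduction from $(\CCG,\ihom)$, and \cref{LouveauRosendal} to conclude completeness. The only difference is that you spell out the routine bookkeeping (Polishness of $\C(X,Y)$, composition of Borel reductions) that the paper leaves implicit.
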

			\begin{proof}
				We proved in \cref{EmbedIsAnalytic} that the quasi-order of embeddability is analytic on $\C(X,Y)$. By \cref{t:thechaossideYY}, we have a continuous reduction from $(\CCG,\ihom)$ to $(\C(X,Y),\emb)$. Hence this follows from the result of Louveau and Rosendal (\cref{LouveauRosendal}).
			\end{proof}

		\section{Order}\label{sec:ORDER}
		
		We now discuss the other half of the dichotomy and prove that the restriction of the embeddability quasi-order is in several cases a well-quasi-order. 
		To do so we appeal to a technical strengthening of the notion of well-quasi-order called better-quasi-order (\bqo{}) introduced by \textcite{nashwell}.

		\subsection{For a fistful of better-quasi-orders}
		
		For the reader's convenience, we give here some basic definitions and collect some results on \bqo{}s which are needed in the sequel. An introduction to the theory is given in \cite{simpsonbqo} and \cite{yann2017towardsbetter} while further results can be found for example in \cite{fbqo, vEMSbqo, LStRbqo, carroypequignot}.
		
		We let $[\omega]^\omega$ denote the set of infinite subsets of $\omega$ with the topology induced by the topology on $2^\omega$ under the identification of a set with its characteristic function. For $X\in[\omega]^\omega$, we let $[X]^\omega$ be the set of infinite subsets of $X$ endowed with the induced topology. If $Q$ is a set, a \emph{$Q$-multi-sequence} is a function $f$ with domain $[X_0]^\omega$ for some $X_0 \in [\omega]^\omega$ and range a countable subset of $Q$. A $Q$-multi-sequence $f$ is locally constant if $f^{-1}(\{q\})$ is open for every $q\in Q$. When $\leq_Q$ is a quasi-order on $Q$ we say that a $Q$-multi-sequence $f$ is \emph{bad} if for all $X\in \dom(f)$ we have $f(X)\nleq_Q f(X^+)$, where $X^+=X\backslash\{\min X\}$. A quasi-order on $Q$ is a \emph{better-quasi-order} (\bqo{}) if there are no bad locally constant $Q$-multi-sequences.
		
		Recall that a quasi-order $Q$ is \wqo{} if and only if there is no \emph{bad sequence} in $Q$, i.e. no sequence $(q_n)_n$ with $q_m\nleq q_n$ whenever $m<n$. Observe now that any bad sequence $(q_n)_n$ in $Q$ induces a locally constant bad $Q$-multi-sequence given by $f(X)=q_{\min X}$. Hence every \bqo{} is indeed a \wqo{}. Straightforward applications of the Galvin-Prikry Theorem \cite{JSL:9194679} show that every finite sum or finite product of \bqo{}s (and in particular any finite qo) is a \bqo{}.
		
		We say that a map $\sigma:Q\to P$ is a \emph{co-homomorphism} if for all $p,q$ in $Q$, $\sigma(p)\leq_P\sigma(q)$ implies $p\leq_Q q$. Notice that if there exists a co-homomorphism from $P$ to $Q$ and $Q$ is \bqo{}, then $P$ is a \bqo{} too.

		We will use a result due to \textcite{vEMSbqo} which establishes a very strong form of Fra\"issé's conjecture and refines the result of \textcite{laverfraisse}. To state this important result, we need to introduce the following strengthening of the \bqo{} property for classes of structures.

		\begin{definition}[\cite{LStRbqo}]\label{defn:preservebqos}
			Let $\mathcal{C}$ be a class of structures together with $\mathcal{C}$-morphisms between them. Assume that all identity maps are $\mathcal{C}$-morphisms and that $\mathcal{C}$-morphisms are closed under composition.\footnote{A class of structures as understood here corresponds exactly to a concrete category in the category theory sense.} Given a qo $Q$, the class $Q^{\mathcal{C}}$ of \emph{$Q$-labelled $\mathcal{C}$-structures} is given by
			\[
			Q^{\mathcal{C}}=\{f\mid f\mbox{ is a function, }\dom(f)\mbox{ is a }\mathcal{C}\mbox{-structure, } \im(f)\sub Q\},
			\]
			together with the following quasi-order:
			\begin{multline*}
			f_0\leq f_1 \Leftrightarrow\exists\,\mathcal{C}\mbox{-morphism } g:\dom(f_0)\rao\dom(f_1)\\
			\mbox{such that }\forall\,x\in\dom(f_0)\,f_0(x)\leq_Qf_1(g(x)).\end{multline*}
			We say that $\mathcal{C}$ \emph{preserves \bqo{}s} if for all \bqo{} $Q$ the class $Q^{\mathcal{C}}$ is also \bqo{}.
		\end{definition}

		Notice that if $\mathcal{C}$ is a class of structures, then the $\mathcal{C}$-morphisms induce a quasi-order where $C\leq C'$ for two structures $C$ and $C'$ in $\mathcal{C}$ iff there exists a $\mathcal{C}$-morphism from $C$ to $C'$. Of course, if a class $\mathcal{C}$ preserves \bqo{}s then in particular $\mathcal{C}$ is a \bqo{} under this quasi-order, as one easily sees by considering the one point \bqo{}.    
		
		The class $\lin$ consists of the countable linear orders. Given two linear orders $K$ and $L$, a $\lin$-morphism is an order preserving injection (or equivalently, an order embedding) from $K$ to $L$ that is continuous with respect to the order topology.
		
		\begin{theorem}[{\cite[Theorem 3.5]{vEMSbqo}}]\label{vEMS}
			The class $\lin$ preserves \bqo{}s.
		\end{theorem}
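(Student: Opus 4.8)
The plan is to run the Nash--Williams minimal bad array method, using Hausdorff's structural analysis of countable linear orders to peel off a strictly simpler class and contradict minimality. The feature that distinguishes this from the classical labelled Fra\"iss\'e conjecture (Laver's theorem) is that the $\lin$-morphisms are required to be \emph{continuous}, so the order-topological (limit) structure must be tracked at every step. First I would assume $Q$ is \bqo{} and suppose toward a contradiction that $Q^{\lin}$ is not, so that there is a bad locally constant multi-sequence valued in $Q^{\lin}$. I would equip the countable linear orders with a well-founded rank $r$: for scattered orders this is the Hausdorff rank of the iterated finite condensation, and it extends to arbitrary countable orders by first passing to the maximal dense condensation (whose classes are scattered) and ranking by the supremum of the ranks of the classes. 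Using the minimal bad array lemma, I fix a bad $Q^{\lin}$-array $F$ whose values have $r$-rank as small as possible, so that no bad array takes only values of strictly smaller rank.

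Each value $F(X)$ is a $Q$-labelled countable order $L_X$, which Hausdorff's theorem decomposes as a sum $\sum_{i\in S_X} P^X_i$ over a \emph{skeleton} $S_X$ that is an ordinal, a reversed ordinal, $\Z$, or a countable dense order, with each summand $P^X_i$ of strictly smaller $r$-rank and the labelling restricting to each summand. The heart of the argument is to show, using minimality of $F$, that the collection $P$ of all $Q$-labelled summands that occur, quasi-ordered by continuous-embeddability, is itself \bqo{}: a bad array among the summands could be amalgamated into a bad array taking values of strictly smaller rank than $F$, which is impossible by minimality.

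It then remains to prove that the class of linear orders built by summing $P$-labelled pieces along the allowed skeletons, with continuous order embeddings as morphisms, is \bqo{}. For ordinal skeletons this is precisely the statement that countable ordinals preserve \bqo{}s, i.e.\ that $P$-labelled countable well-orders form a \bqo{} --- the Nash--Williams transfinite-sequence theorem, run in its continuous refinement; the reversed-ordinal and $\Z$ cases follow by duality and finite sums, and the dense ($\Q$) case reduces to the ordinal case through the standard representation of a countable dense order as an increasing union of finite pieces. A continuous embedding of one such sum into another factors as a continuous embedding of skeletons together with embeddings of the corresponding fibers, and conversely such data glue back together into a continuous embedding (in the spirit of \cref{LemWedgeEmb}); so a bad array for the reassembled class would descend either to a bad array for $P$ or to one for the skeleton class, the final contradiction.

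The hard part is the continuity condition, present at every stage. In the unrestricted Fra\"iss\'e setting one may embed a piece into any piece and place it anywhere, whereas here a morphism must preserve all existing suprema and infima, so the skeleton embedding must itself be continuous and a piece sitting at a one-sided limit can only be sent to a piece with matching one-sided-limit behaviour. The crux is therefore to set up the decomposition so that ``continuous embedding of the whole'' factors \emph{exactly} as ``continuous embedding of the skeleton plus embeddings of the fibers,'' and then to verify that the ordinal-indexed labelled class in the reassembly step is \bqo{} under precisely these continuous morphisms --- that is, to carry the Nash--Williams minimal-bad-array argument through while respecting the order topology, which is exactly the refinement over Laver's theorem that makes the statement nontrivial.
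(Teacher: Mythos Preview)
The paper does not prove this theorem. It is stated as \cref{vEMS} and attributed to \cite[Theorem 3.5]{vEMSbqo}; no argument is given in the paper itself, and the result is used as a black box (only through its corollary \cref{corvEMS}) in the proofs of \cref{t:locallyconstant} and \cref{t:finitelymanylimitpoints}. So there is nothing in the paper to compare your proposal against.

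That said, your sketch is broadly in the spirit of the van Engelen--Miller--Steel argument: minimal bad array plus a Hausdorff-type structural decomposition, with the extra bookkeeping forced by the continuity requirement on the morphisms. As written it is only an outline; the step where you claim the dense-skeleton case ``reduces to the ordinal case through the standard representation of a countable dense order as an increasing union of finite pieces'' is the place where a referee would ask for real work, since continuity of the embedding interacts nontrivially with how one glues the finite approximations. But none of this is relevant to the present paper, which simply quotes the result.
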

		
		Let us denote by $\mathcal{Q}$ the class consisting of the single linear order $\Q$ together with topological embeddings as $\mathcal{Q}$-morphisms. Notice that a map between two linear orders is order preserving and injective iff it is an embedding of linear orders. Moreover if $j:K\to L$ is an order embedding between linear orders equipped with the order topology, then the direct image of any open set in $K$ is relatively open in the range of $j$. Therefore, any $\lin$-morphism $f:\Q\to \Q$ is in particular a topological embedding. We state the following direct corollary of \cref{vEMS} for later use.
		
		\begin{corollary}\label{corvEMS}
			The class $\mathcal{Q}$ preserves \bqo{}s.
		\end{corollary}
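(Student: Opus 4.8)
The plan is to derive this from \cref{vEMS} using the principle recorded above that a co-homomorphism into a \bqo{} forces its source to be \bqo{}. Concretely, fixing a \bqo{} $Q$, I would exhibit a co-homomorphism from $Q^{\mathcal{Q}}$ to $Q^{\lin}$; since $Q^{\lin}$ is \bqo{} by \cref{vEMS}, this immediately yields that $Q^{\mathcal{Q}}$ is \bqo{} as well, which is precisely the assertion that $\mathcal{Q}$ preserves \bqo{}s.

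The natural candidate is the inclusion $\iota\colon Q^{\mathcal{Q}}\to Q^{\lin}$. Since $\Q$ is the unique object of the category $\mathcal{Q}$, an element of $Q^{\mathcal{Q}}$ is just a function $f\colon \Q\to Q$; I would send it to the very same function, now viewed as an element of $Q^{\lin}$ whose domain is the countable linear order $\Q$. This is plainly well defined.

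To verify that $\iota$ is a co-homomorphism, I must check that $\iota(f_0)\leq \iota(f_1)$ in $Q^{\lin}$ already implies $f_0\leq f_1$ in $Q^{\mathcal{Q}}$. Unfolding the definition, $\iota(f_0)\leq \iota(f_1)$ supplies a $\lin$-morphism $g\colon \Q\to\Q$, that is, an order preserving continuous injection, with $f_0(x)\leq_Q f_1(g(x))$ for every $x\in\Q$. By the remark immediately preceding the statement, every such $g$ is in particular a topological embedding, hence a $\mathcal{Q}$-morphism; so the same $g$ witnesses $f_0\leq f_1$ in $Q^{\mathcal{Q}}$. Thus $\iota$ reflects the order, as required, and the conclusion follows.

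The argument is short because the substantial content is already carried by \cref{vEMS} and by the earlier observation that every order preserving continuous injection of $\Q$ into itself is a topological embedding. The only point that demands care is the direction of the co-homomorphism: because $\mathcal{Q}$ admits strictly more morphisms on $\Q$ than $\lin$ does (a topological embedding of $\Q$ need not respect the order), $\mathcal{Q}$-comparability in $Q^{\mathcal{Q}}$ is coarser than $\lin$-comparability in $Q^{\lin}$, and it is exactly this feature that makes the inclusion \emph{reflect} rather than merely preserve comparability, so that \cref{vEMS} applies in the form we need.
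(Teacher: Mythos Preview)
Your argument is correct and is exactly the approach the paper intends: the paper states the corollary as a ``direct corollary'' of \cref{vEMS}, relying on the observation made just before that every $\lin$-morphism $\Q\to\Q$ is a topological embedding, which is precisely what makes your inclusion $\iota$ a co-homomorphism. Your write-up simply makes explicit the one-line deduction the paper leaves to the reader.
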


		We denote by $\POD$ the class of Polish \zerodim{} spaces quasi-ordered by topological embeddability. We recall \cite[Theorem 1.3]{carroy2}:
		
		\begin{theorem}\label{carroyPOD}
			The quasi-order $\POD$ is a \bqo{}.
		\end{theorem}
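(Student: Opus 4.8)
The plan is to present $\POD$ as the domain of a \emph{co-homomorphism} into a \bqo{}-preserving class of labelled structures, so that the preservation results \cref{vEMS} and \cref{corvEMS} carry all the weight. Concretely, I would attach to each Polish \zerodim{} space $X$ a combinatorial code $c(X)$ lying in a class $Q^{\mathcal{C}}$ of $Q$-labelled $\mathcal{C}$-structures as in \cref{defn:preservebqos}, and prove the reflecting implication $c(X)\leq c(Y)\Rightarrow X\emb Y$. Since a co-homomorphism from $P$ into a \bqo{} forces $P$ itself to be \bqo{} (as recalled above), it then suffices to know that $Q^{\mathcal{C}}$ is \bqo{}, and this is exactly what \cref{vEMS} provides once the label set $Q$ and the class $\mathcal{C}$ are kept simple enough.

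Before building the code I would record the decisive simplification that the perfect part is \emph{trivial} for $\emb$: every nonempty perfect Polish \zerodim{} space contains a copy of $2^\omega$ by the perfect set theorem and, being Polish \zerodim{}, embeds into $2^\omega$, so all such spaces are bi-embeddable with $2^\omega$. Moreover a perfect subset of any Polish space lies inside its perfect kernel, whence a topological embedding $\sigma\colon X\to Y$ carries the perfect kernel $P_X$ into $P_Y$. Thus the only genuine content sits in the \emph{scattered part} $S=X\setminus P$, which is countable, open and scattered, and in the manner in which $S$ accumulates onto itself and onto $P$. The encoding should therefore run the Cantor--Bendixson analysis of \cref{fact:onCBrank} down to the perfect kernel, peeling off $\isolated(\CB_\alpha(X))$ transfinitely to produce a well-founded tree of clopen pieces whose leaves carry a label from the finite (hence \bqo{}) set $\{\varnothing,\ \bullet,\ 2^\omega\}$ recording whether a piece is empty, a single point, or bi-embeds with the Cantor space, with \cref{prop:simpleCB2} classifying the rank-one pieces.

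The hard part will be to organize this tree so that topological embeddability matches the morphisms of a \bqo{}-preserving class, and in particular to \emph{linearize} the accumulation data so as to bring $\lin$ and $\mathcal{Q}$ into play: at each node the immediate subpieces accumulate onto it along a countable linear order, and an embedding must respect that order continuously (in the sense of \cref{vEMS}) and the labels, while fixing distinguished points and sending limits to limits as dictated by \cref{fact:onCBrank}. Proving the reflecting implication $c(X)\leq c(Y)\Rightarrow X\emb Y$ then reduces to gluing the piecewise embeddings supplied by a $\mathcal{C}$-morphism into a single topological embedding, the gluing being legitimized by \cref{LemWedgeEmb} and its transfinite iterations. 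I expect the delicate points to be the limit stages of the Cantor--Bendixson recursion, where the limit pieces must be recombined into a closed set with the correct convergence behaviour, and the verification that the labelled class obtained really inherits \bqo{}-preservation from \cref{vEMS} through the transfinite iteration of the $\lin$-labelling construction; a minimal bad array argument in the style of Nash-Williams is the natural device to close this induction uniformly across all countable Cantor--Bendixson ranks.
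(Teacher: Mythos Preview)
The paper does not prove this theorem at all: it is stated with the preface ``We recall \cite[Theorem 1.3]{carroy2}'' and is invoked as a black box in the proofs of \cref{t:locallyconstant} and \cref{t:finitelymanylimitpoints}. There is therefore no in-paper argument to compare your proposal against.

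As for the proposal itself, what you have written is a strategic outline rather than a proof, and it is broadly consonant with the approach actually taken in \cite{carroy2}: code each space by a labelled well-founded structure derived from its Cantor--Bendixson analysis, and push the \bqo{} burden onto a preservation theorem in the style of \cref{vEMS}. A few cautionary remarks are in order, though. First, your reduction of the perfect kernel to a single label is too quick: while any two nonempty perfect Polish \zerodim{} spaces are indeed bi-embeddable, the interaction between the scattered part and the perfect kernel is not captured by merely recording the label $2^\omega$; points of the scattered part can accumulate onto the kernel in genuinely different ways, and an embedding need not respect the Cantor--Bendixson decomposition (scattered points may be sent into the perfect kernel of the target). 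Second, the ``linearization'' step is where the real work hides: there is no canonical countable linear order on the clopen pieces at a given CB level, and manufacturing one so that $\lin$-morphisms reflect topological embeddings is precisely the delicate construction carried out in \cite{carroy2}. Third, the transfinite gluing you allude to via iterated applications of \cref{LemWedgeEmb} requires a uniform control at limit stages that your sketch does not supply; the minimal bad array machinery you mention is indeed the right tool, but deploying it correctly is the substance of the argument, not a closing formality.
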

		
		In the sequel, if $Q$ is a quasi-order, then we denote by $Q_\star$ the quasi-order obtained from $Q$ by adding a new element $\star$ which is incomparable with every other element of $Q$. Of course, $Q$ is \bqo{} iff $Q_\star$ is \bqo{}.

		\subsection{For a few well-quasi-orders more}
		
		Since any countable metrizable space is homeomorphic to a closed subspace of $\Q$ (see \cite[(7.12)]{kechris2012classical}), we have
		\begin{fact}\label{fact:fctctblerange}
			For every continuous function $f$ with a \zerodim{} Polish domain $X$ and countable image there is $f^\Q\in\C(X,\Q)$ with $f\emb f^{\Q}$ and $f^{\Q}\emb f$.
		\end{fact}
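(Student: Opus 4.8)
The plan is to compose $f$ with a topological embedding of its image into $\Q$ and then check that the identity map on $X$ witnesses embeddability in both directions. First I would note that $\im f$ is a countable subspace of the Polish codomain of $f$, hence a countable metrizable space. Invoking \cite[(7.12)]{kechris2012classical} as recalled just above the statement, I would fix a topological embedding $j\colon \im f\to \Q$ whose image is a closed subspace of $\Q$; for the argument all that is really used is that $j$ is a homeomorphism onto $j(\im f)$. I then set $f^\Q=j\circ f\colon X\to \Q$, which is continuous as a composition of continuous maps and therefore lies in $\C(X,\Q)$.

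It remains to exhibit witnesses for $f\emb f^\Q$ and $f^\Q\emb f$. Since $\dom f=\dom f^\Q=X$, in both cases I would take the first coordinate of the witnessing pair to be the identity embedding $\mathrm{id}_X\colon X\to X$. For $f\emb f^\Q$, the second coordinate is $\tau=j\rest{\im f}\colon \im f\to \im f^\Q$; this is an embedding because $j$ is one, its range is exactly $\im f^\Q=j(\im f)$, and it satisfies $\tau\circ f=j\circ f=f^\Q=f^\Q\circ \mathrm{id}_X$. For $f^\Q\emb f$, the second coordinate is the inverse homeomorphism $\tau'=(j\rest{\im f})^{-1}\colon \im f^\Q\to \im f$, which is again an embedding since $j$ is a homeomorphism onto its image, and which satisfies $\tau'\circ f^\Q=j^{-1}\circ j\circ f=f=f\circ\mathrm{id}_X$. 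Thus $(\mathrm{id}_X,\,j\rest{\im f})$ embeds $f$ into $f^\Q$ and $(\mathrm{id}_X,\,(j\rest{\im f})^{-1})$ embeds $f^\Q$ into $f$, giving $f\emb f^\Q$ and $f^\Q\emb f$.

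There is essentially no obstacle here: the whole content is the classification fact that every countable metrizable space embeds as a closed subspace of $\Q$, which is quoted from \cite{kechris2012classical}. The only point requiring a moment of care is to match the paper's sequential definition of a topological embedding — namely that $x_n\to x$ if and only if $j(x_n)\to j(x)$ — which holds automatically once $j$ is a homeomorphism onto its image, and likewise for $(j\rest{\im f})^{-1}$ on $\im f^\Q$. In particular one does not even need the closedness of $j(\im f)$ in $\Q$ for the embeddability argument; it is granted for free by the cited result.
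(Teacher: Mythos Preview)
Your proof is correct and is exactly the argument the paper has in mind: the paper simply invokes the embedding of any countable metrizable space into $\Q$ and states the fact without further details, while you have spelled out the routine verification that $(\mathrm{id}_X,j)$ and $(\mathrm{id}_X,j^{-1})$ witness $f\emb f^\Q$ and $f^\Q\emb f$.
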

		
		We first show that when $X$ is discrete then embeddability is a \bqo{} on $\C(X,Y)$. Since every continuous function from a discrete space is locally constant, this follows from the following more general result.
		
		\begin{theorem}\label{t:locallyconstant}
			The class $\LC$ of locally constant functions with Polish \zerodim{} domain is better-quasi-ordered by topological embeddability.
		\end{theorem}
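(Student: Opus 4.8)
The plan is to recognise a locally constant function as, in essence, a countable ``bag'' of Polish \zerodim{} spaces, and then to exploit that $\POD$ is a \bqo{} (\cref{carroyPOD}) together with the fact that the class $\mathcal{Q}$ preserves \bqo{}s (\cref{corvEMS}), via the co-homomorphism principle.

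First I would record the following reformulation. If $f\colon X\to Y$ is locally constant, then every point has a neighbourhood on which $f$ is constant, so each level set $C^f_i=f^{-1}(y_i)$, with $y_i$ ranging over the countably many values $\{y_i:i\in I_f\}=\im f$, is clopen in $X$; as $X$ is Polish \zerodim{}, each $C^f_i$ belongs to $\POD$ and $X=\bigsqcup_{i\in I_f}C^f_i$ is their topological sum. Since an embedding $\tau\colon\im f\to\im g$ between the discrete spaces $\im f,\im g$ is merely an injection, unwinding the definition of $\emb$ shows that $f\emb g$ holds if and only if there is an injection $j\colon I_f\to I_g$ with $C^f_i\emb C^g_{j(i)}$ in $\POD$ for every $i$. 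The nontrivial direction here is that, given such $j$ and embeddings $\sigma_i\colon C^f_i\to C^g_{j(i)}$, the map $\sigma=\bigsqcup_i\sigma_i$ is an embedding of $X$ into the domain of $g$: this is where I expect the only real care to be needed, and it is exactly where injectivity of $j$ is used. Indeed, using the sequential characterisation of embeddings, if $\sigma(x_n)\to\sigma(x)$ with $x\in C^f_{i_0}$, then since $C^g_{j(i_0)}$ is clopen the images $\sigma(x_n)$ lie eventually in $C^g_{j(i_0)}$; injectivity of $j$ forces $x_n$ to lie eventually in $C^f_{i_0}$, and then $\sigma_{i_0}$ being an embedding yields $x_n\to x$ (the converse implication is immediate from clopenness of $C^f_{i_0}$).

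Next I would build a co-homomorphism from $\LC$ to the quasi-order $(\POD_\star)^{\mathcal{Q}}$. For each $f\in\LC$ fix an injection $e_f\colon I_f\to\Q$ and define $\varphi(f)\colon\Q\to\POD_\star$ by $\varphi(f)(q)=C^f_{e_f^{-1}(q)}$ if $q\in\im e_f$ and $\varphi(f)(q)=\star$ otherwise. I claim that $\varphi(f)\leq\varphi(g)$ in $(\POD_\star)^{\mathcal{Q}}$ implies $f\emb g$. Suppose $\varphi(f)\leq\varphi(g)$, witnessed by a $\mathcal{Q}$-morphism, that is a topological embedding $h\colon\Q\to\Q$, with $\varphi(f)(q)\leq_{\POD_\star}\varphi(g)(h(q))$ for all $q$. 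For $q=e_f(i)$ the label $\varphi(f)(q)=C^f_i$ is not $\star$; since $\star$ is $\leq_{\POD_\star}$-incomparable with every space in $\POD$, this forces $\varphi(g)(h(q))\neq\star$, hence $h(q)\in\im e_g$, say $h(e_f(i))=e_g(j(i))$, and moreover $C^f_i\emb C^g_{j(i)}$. As $h$, $e_f$ and $e_g$ are injective, $j\colon I_f\to I_g$ is injective, so the reformulation above gives $f\emb g$. Thus $\varphi$ is a co-homomorphism from $\LC$ to $(\POD_\star)^{\mathcal{Q}}$.

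Finally I would conclude. Since $\POD$ is a \bqo{} by \cref{carroyPOD}, so is $\POD_\star$, and since $\mathcal{Q}$ preserves \bqo{}s by \cref{corvEMS}, the quasi-order $(\POD_\star)^{\mathcal{Q}}$ is a \bqo{}. As a co-homomorphism into a \bqo{} witnesses that its domain is a \bqo{}, it follows that $\LC$ is better-quasi-ordered by $\emb$.
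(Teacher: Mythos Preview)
Your reformulation in the first paragraph is incorrect: the images $\im f$ and $\im g$ are \emph{not} in general discrete. They carry the subspace topology from the codomain, and the definition of $\emb$ requires $\tau\colon\im f\to\im g$ to be a topological embedding, not merely an injection. A locally constant function on a Polish space does have countable image, but countable subspaces of a Polish space need not be discrete. Concretely, take $X=\omega$ discrete, $Y=\Q$, $f(0)=0$ and $f(n)=1/n$ for $n\geq1$, and $g(n)=n$. Both are locally constant with singleton level sets, so your criterion (an injection $j$ together with embeddings of singletons into singletons) is trivially met; yet $\im f\cong\omega+1$ cannot embed into the discrete set $\im g$, hence $f\not\emb g$. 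So the backward direction of your ``iff'' fails, and with it the co-homomorphism claim: from $\varphi(f)\leq\varphi(g)$ you only recover an injection $j$ between index sets, not a topological embedding between images.

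The paper's proof avoids this by first reducing to $\Q$-valued functions via \cref{fact:fctctblerange}, and then defining $\Lambda_f\colon\Q\to\POD_\star$ using the \emph{actual} values of $f$ in $\Q$: $\Lambda_f(q)=f^{-1}(\{q\})$ for $q\in\im f$, and $\star$ otherwise. The point is that a $\mathcal{Q}$-morphism $\tau\colon\Q\to\Q$ witnessing $\Lambda_f\leq\Lambda_g$ is itself a topological embedding of $\Q$, and the $\star$-labels force $\tau(\im f)\subseteq\im g$, so $\tau$ restricts to the required topological embedding $\im f\to\im g$. Your arbitrary injections $e_f\colon I_f\to\Q$ throw away precisely the topological information on $\im f$ that is needed; replacing them with a fixed embedding of $\im f$ into $\Q$ (equivalently, passing to $f^\Q$) is what makes the argument go through.
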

		
		\begin{proof}
			Let $\LC_\Q$ denote the class of locally constant functions $f:X\to \Q$ where $X$ is Polish \zerodim{} quasi-ordered by embeddability. By \cref{fact:fctctblerange}, it suffices to show that $\LC_\Q$ is \bqo{}.
			
			We define for every $f\in \LC_\Q$ a function $\Lambda_f:\Q\to \POD_\star$ with the following property: whenever $\tau:\Q\to \Q$ witnesses $\Lambda_f\leq \Lambda_g$ in $\POD_\star^{\mathcal{Q}}$ we can find an embedding $\sigma:X\to X$ such that $(\sigma,\tau)$ is a witness for $f\emb g$. Combining \cref{corvEMS} and \cref{carroyPOD}, we get that $\POD_\star^{\mathcal{Q}}$ is a \bqo{}. This makes $f\mapsto \Lambda_f$ a co-homomorphism, which implies that $(\LC_\Q,\emb)$ is \bqo{}.
			
			Given $f:X\to\Q$, let
			\begin{align*}
			\Lambda_f:\Q&\longrightarrow\POD_\star \\
			q&\longmapsto\begin{cases}
			f^{-1}(\{q\})&\mbox{if }q\in\im f\\
			\star&\mbox{otherwise.}
			\end{cases}
			\end{align*}
			Notice that for all $q\in \im f$ the set $f^{-1}(\{q\})$ is a clopen subset of $X$, and as such it is a Polish \zerodim{} space.
			
			Take $f:X\to \Q$ and $g:Y\to\Q$ locally constant with $X$ and $Y$ Polish \zerodim{} and suppose that $\Lambda_f\leq \Lambda_g$ holds in $\POD_\star^\mathcal{Q}$ as witnessed by some embedding $\tau:\Q\to \Q$.
			
			By the role of $\star$, we have $q\in\im f$ if and only if $\tau(q)\in\im g$ for all $q\in Q$ and so $\tau$ restricts to an embedding $:\im f \to \im g$. Moreover for every $q\in \im f$ we can pick an embedding $\sigma_q:f^{-1}(\{q\})\to g^{-1}(\{\tau(q)\})$. As $f$ and $g$ are locally constant, $f^{-1}(\{q\})$ is clopen in $X$ and $g^{-1}(\{\tau(q)\})$ is clopen in $Y$. This easily implies that $\bigcup_{q\in \im f} \sigma_q$ is an embedding of $X$ into $Y$. As $\tau \circ f=g\circ \sigma$ clearly holds, we get $f\emb g$ via $(\tau,\sigma)$ as desired.
		\end{proof}

		\begin{theorem}\label{t:finitelymanylimitpoints}
			If $X$ is a locally compact Polish \zerodim{} space with finitely many non-isolated points then $(\C(X,Y), \emb)$ is a \bqo{}.
		\end{theorem}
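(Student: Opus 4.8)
The plan is to reduce everything to a countable setting, fix a concrete decomposition of $X$, and then exhibit a co-homomorphism from $(\C(X,\Q),\emb)$ into a \bqo{} assembled from $\POD$ and the class $\mathcal{Q}$ via \cref{corvEMS,carroyPOD}. First I would observe that $X$ is countable: being Polish it is second countable, so its open discrete set $\isolated(X)$ is countable, and only finitely many points remain. Hence every $f\in\C(X,Y)$ has countable image, and by \cref{fact:fctctblerange} the map $f\mapsto f^\Q$ is a co-homomorphism from $(\C(X,Y),\emb)$ into $(\C(X,\Q),\emb)$ (if $f^\Q\emb g^\Q$ then $f\emb f^\Q\emb g^\Q\emb g$). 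So it suffices to prove that $(\C(X,\Q),\emb)$ is \bqo{}. Using local compactness and zero-dimensionality, I fix pairwise disjoint \emph{compact} clopen neighborhoods $C_1,\dots,C_k$ of the non-isolated points $x_1,\dots,x_k$, each containing a single non-isolated point; by \cref{prop:simpleCB2} each $C_i$ is homeomorphic to $\omega+1$, and $D:=X\setminus\bigcup_i C_i$ is clopen and discrete, so $X\cong D\sqcup\bigsqcup_{i<k}(\omega+1)$.

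Next I would analyze embeddings to read off the right invariant. Any embedding $\sigma\colon X\to X$ sends isolated points to isolated points and induces a bijection, hence a permutation $\pi$, of $\{x_1,\dots,x_k\}$; moreover a sequence of isolated points converges (to some $x_i$) if and only if it is eventually inside a single copy $C_i$ and escapes to infinity there. Therefore $\sigma$ must carry all but finitely many points of $C_i$ into $C_{\pi(i)}$, and the value-matching equation $\tau\circ f=g\circ\sigma$ forces a single embedding $\tau\colon\im f\to\im g$ with $\tau(f(x_i))=g(x_{\pi(i)})$ together with, for cofinitely many values $q$ near $f(x_i)$, the \emph{per-copy} domination $|f^{-1}(q)\cap C_i|\le |g^{-1}(\tau q)\cap C_{\pi(i)}|$; the finitely many exceptional points of the copies and all of $D$ must then embed, through the same $\tau$, into the discrete remainder. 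This dictates the invariant $\Lambda_f$: a single $\mathcal{Q}$-labelled structure on $\Q$ recording, at each $q\in\im f$, the pair consisting of the discrete fiber $f^{-1}(q)\cap D\in\POD$ and the $\N$-valued germ counts $(|f^{-1}(q)\cap C_i|)_{i<k}$ attached to the marked limit values $f(x_i)$, and $\star$ off $\im f$. Because the germ coordinate is indexed by the \emph{finite marked set} of limit values, which a single order-embedding $\tau$ of $\Q$ permutes, the permutation $\pi$ is precisely the action of $\tau$ on the marked points, so one $\tau$ governs the germs and the discrete part simultaneously.

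For the target I would check \bqo{}-ness. The label \bqo{} is built from $\POD$ (a \bqo{} by \cref{carroyPOD}), from $\N=\omega+1$ with its well-order, and from finite products together with the permutation of the $\le k$ marked centers; each of these operations preserves \bqo{}s, and adjoining $\star$ does too. Feeding this label \bqo{} into \cref{corvEMS} (that $\mathcal{Q}$ preserves \bqo{}s) shows that the class of such labelled structures is a \bqo{}. As in the proof of \cref{t:locallyconstant}, it then remains to verify that $f\mapsto\Lambda_f$ reflects the order, i.e. that $\Lambda_f\le\Lambda_g$ implies $f\emb g$; combined with the \bqo{}-ness of the target this makes $f\mapsto\Lambda_f$ a co-homomorphism and yields that $(\C(X,\Q),\emb)$ is \bqo{}.

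The main obstacle is exactly this reconstruction (co-homomorphism) direction. Given a witnessing order-embedding $\tau$ of $\Q$, with its induced permutation $\pi$ of the centers, and the per-coordinate domination, I would build $\sigma$ copy by copy: inside each $C_i$ the germ domination lets me embed cofinitely much of $C_i$ into $C_{\pi(i)}$ respecting $\tau$ and escaping to infinity, thereby preserving convergence to $x_{\pi(i)}$; the finitely many exceptional points together with all of $D$ are placed, again through $\tau$, into the discrete remainder of $g$ (whose infinite discrete part absorbs the finitely many exceptions), and the pieces are glued using \cref{LemWedgeEmb}. The hard part—and the reason the plain fiber invariant of \cref{t:locallyconstant} is insufficient here—is the convergence control at the finitely many non-isolated points: one must guarantee that $\sigma$ sends sequences converging to $x_i$ to sequences converging to $x_{\pi(i)}$, which is what forces germ rather than mere total-fiber data, while keeping the value correspondence $\tau$ globally consistent across the different copies and the discrete part. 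Once $\sigma$ is shown to be a topological embedding reducing the $f$-fiber equivalence to the $g$-fiber equivalence, the induced $\tau$ is an embedding of $\im f$ into $\im g$ and $(\sigma,\tau)$ witnesses $f\emb g$.
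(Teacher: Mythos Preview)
Your overall strategy---reduce to $\Q$-valued functions via \cref{fact:fctctblerange}, decompose $X$ as $D\sqcup\bigsqcup_{i<k}(\omega+1)$, and exhibit a co-homomorphism into a $\mathcal{Q}$-labelled \bqo{} built from \cref{corvEMS,carroyPOD}---is exactly the paper's. But two complications you introduce are both unnecessary and create genuine gaps.

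\textbf{The permutation $\pi$.} Your forward analysis of what an embedding $\sigma$ must do is correct, but for a co-homomorphism you only need the \emph{backward} direction $\Lambda_f\le\Lambda_g\Rightarrow f\emb g$. The paper simply takes the label at $q$ to be the $k$-tuple of per-copy data and compares \emph{coordinate-wise} in the product; if that compares via a single $\tau$, one embeds $f\rest{C_i}$ into $g\rest{C_i}$ for each $i$ and takes the union. Encoding a \emph{global} permutation inside the $Q^{\mathcal{C}}$ framework of \cref{defn:preservebqos} does not work as you describe: the label quasi-order is fixed in advance and independent of the morphism $\tau$, so if you allow permutations of coordinates at the label level you get a permutation that may vary with $q$, not a single $\pi$. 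Your sentence ``the permutation $\pi$ is precisely the action of $\tau$ on the marked points'' presupposes that $\tau$ sends $\{f(x_i)\}$ onto $\{g(x_j)\}$, which nothing in your label forces (and it also breaks down when some $f(x_i)$ coincide).

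\textbf{Germs and exceptional points.} Your reconstruction embeds only ``cofinitely much of $C_i$'' into $C_{\pi(i)}$ and sends the finitely many exceptions, together with $D$, to ``the discrete remainder of $g$ (whose infinite discrete part absorbs the finitely many exceptions)''. But $D$ can be finite or empty, and your invariant records only the $D$-fibers of $g$; there is no reason the target has room. The paper sidesteps this entirely by recording, for each copy $\omega+1$ and each $q$, the \emph{exact} fiber cardinality $|f^{-1}(q)|$ together with a three-valued marker distinguishing $q=f(\omega)$, $q\in\im f\setminus\{f(\omega)\}$, and $q\notin\im f$. That marker forces $\tau(f(\omega))=g(\omega)$, so one takes $\sigma(\omega)=\omega$ and injects each fiber exactly---no exceptional points, no permutation. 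The label space is the \bqo{} $\QOcolor=(\omega+1)\sqcup\omega\sqcup\{\star\}$, and the full invariant for $X$ lands in $\bigl(\prod_{i<k}(\POD_\star\times\QOcolor)\bigr)^{\mathcal{Q}}$.
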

		\begin{proof}
			We start by reducing the problem in several steps.
			Any Polish space with only finitely many limit points is in particular countable, so any function with such domain has a countable image, so once again it is enough to treat the case of rational valued functions.  We prove that topological embeddability on $\C(X,\Q)$ is a \bqo{}.
			
			If $D$ is a discrete space, then every continuous function from $D$ is locally constant.
			Therefore as in the proof\footnote{As $D$ is discrete, any injective function $\sigma:D\to D$ is an embedding and so we do not really need $\POD$ in this case. We could use $\omega+1$ instead and let $\Lambda_f(q)=|f^{-1}(\{q\}|$ if $q\in \im f$ and $\star$ otherwise.} of \cref{t:locallyconstant} we have map $\C(D,\Q)\to \POD_\star^\mathcal{Q}$, $f \mapsto \Lambda_f$ such that if $\tau$ witnesses $\Lambda_f\leq \Lambda_g$ in $\POD_\star^\mathcal{Q}$ then $f\emb g$ as witnessed by $(\sigma,\tau)$ for some embedding $\sigma:D\to D$.

			We will define a \bqo{} $\QOcolor$ and associate to every $f\in \C(\omega+1,\Q)$ a function $\Gamma_f:\Q\to \QOcolor$ such that if $\tau$ witnesses $\Gamma_f\leq \Gamma_g$ in $\QOcolor^\mathcal{Q}$, then $f\emb g$ via $(\sigma,\tau)$ for some $\sigma$.
			
			Assuming this for now, we show that this enough to conclude. Suppose first that $X$ has a unique limit point. Applying \cref{prop:simpleCB2} and using the fact that $X$ is locally compact, we have $X=D\sqcup \omega+1$ for some (possibly empty) discrete space $D$. Consider the map $\Gamma^X:\C(X,\Q)\to (\POD_\star\times \QOcolor)^\mathcal{Q}$ given by $\Gamma^X(f)(q)=(\Lambda_{f_0}(q),\Gamma_{f_1}(q))$ for every $q\in \Q$ where $f_0=f\rest{D}$ and $f_1=f\rest{\omega+1}$. Then, as finite products of \bqo{}s are \bqo{}, $(\POD_\star\times \QOcolor)^\mathcal{Q}$ is \bqo{} by \cref{corvEMS}. Moreover  $\Gamma^X$ is also a co-homomorphism. To see this, notice that if $\tau$ witnesses $\Gamma^X(f)\leq \Gamma^X(g)$ in $(\POD_\star\times \QOcolor)^\mathcal{Q}$ then there exist two embeddings $\sigma_0:D\to D$ and $\sigma_1:\omega+1\to\omega+1$ such that $(\sigma_i,\tau)$ embeds $f_i$ in $g_i$ for $i=0,1$. Hence, clearly $(\sigma_0\cup \sigma_1,\tau)$ embeds $f$ in $g$.
			
			Now suppose that $X$ is locally compact with $n+1$ limit points. Then by \cref{fact:onCBrank}, $X=\bigsqcup_{i\leq n} X_i$ where each $X_i$ has a single limit point and is locally compact too. Consider the map $\Gamma^X:\C(X,\Q)\to (\prod_{i\leq n}(\POD_\star\times \QOcolor))^\mathcal{Q}$ given by $\Gamma^X(f)(q)=(\Gamma^{X_i}(f_i)(q))_{i\leq n}$ for every $q\in \Q$ where $f_i=f\rest{X_i}$. As before $(\prod_{i\leq n}(\POD_\star\times \QOcolor))^\mathcal{Q}$ is \bqo{} and the proof that $\Gamma^X$ is a co-homomorphism is similar.
			
			We take $\QOcolor$ to be the disjoint union $(\omega+1)\sqcup \omega \sqcup \{\star\}$ quasi-ordered by $(i,p)\leq (j,q)$ iff $i=j$ and $p\leq q$. Then $\QOcolor$ is \bqo{} as a finite sum of \bqo{}s.
			Let $f:\omega+1\to \Q$ be continuous. Notice that if $ q\neq f(\omega)$, then by continuity of $f$ the set $f^{-1}(\{q\})$ must be finite. Define
			\begin{align*}
			\Gamma_f:\Q&\longrightarrow \QOcolor  \\
			q&\longmapsto\begin{cases}
			(0,|f^{-1}(\{q\})|)&\mbox{if $f(\omega)=q$,}\\
			(1,|f^{-1}(\{q\})|)&\mbox{if $f(\omega)\neq q$, but $q\in \im f$, and}\\
			(2,\star)& \mbox{otherwise.}
			\end{cases}
			\end{align*}
			Suppose that $\tau:\Q\to \Q$ is an embedding witnessing $\Gamma_f\leq \Gamma_g$. By the role of $\star$ $\tau$ necessarily restricts to an embedding $:\im f \to \im g$. Moreover we must have $\tau(f(\omega))=g(\omega)$ and the cardinality of $f^{-1}(\{f(\omega)\})$ is lower or equal to that of $g^{-1}(\{g(\omega)\})$. Therefore taking any injection $\sigma_\omega: f^{-1}(\{f(\omega)\}) \to g^{-1}(\{g(\omega)\})$ with $\sigma_\omega(\omega)=\omega$ we have an embedding. Finally for every $q\in \im f$ with $q\neq f(\omega)$ there is an injection $\sigma_q: f^{-1}(\{q\})\to g^{-1}(\{\tau(q)\})$. By taking the union of all the $\sigma_q$ together with $\sigma_\omega$, we obtain an injection $\sigma:\omega+1\to\omega+1$ such that $\sigma(\omega)=\omega$, therefore $\sigma$ is an embedding. Since clearly $g\circ \sigma=\tau\circ f$, it follows that $(\sigma,\tau)$ witnesses $f\emb g$.
		\end{proof}


		\section{Maximal elements in Baire classes} \label{sec:RankEmbed}
		
		In this section, we investigate the existence of maximal elements for embeddability in the Baire classes. We start by looking at the very specific case  of the class of continuous functions.
		
		\subsection{Continuous functions}
		
		Consider the Hilbert cube $[0,1]^\omega$ and let $\proj:[0,1]^\omega\times[0,1]^\omega \to [0,1]^\omega$ be the projection $\proj(\beta,\gamma)=\gamma$.
		
		\begin{proposition}\label{prop:ProjMaximum}
			The map $\proj$ is a maximum for the embeddability quasi-order on continuous functions between separable metrizable spaces. In particular, it is a maximum on continuous functions between Polish spaces.
		\end{proposition}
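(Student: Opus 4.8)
The plan is to exploit the universality of the Hilbert cube, namely that every separable metrizable space embeds topologically into $[0,1]^\omega$ (Urysohn's theorem). Fix a continuous $f\colon X\to Y$ with $X,Y$ separable metrizable; the goal is to produce a witness $(\sigma,\tau)$ for $f\emb \proj$. Since $\im f$ is itself separable metrizable, universality furnishes an embedding $\tau\colon \im f\to[0,1]^\omega$, and because $\im \proj=[0,1]^\omega$ this $\tau$ already has the correct codomain to serve as the range-side embedding.

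The key observation is that once $\tau$ is chosen, the defining equation $\proj\circ\sigma=\tau\circ f$ completely determines the second coordinate of $\sigma$ while leaving the first coordinate entirely free. I would therefore invoke universality once more to pick an embedding $\beta\colon X\to[0,1]^\omega$ and set
\[
\sigma\colon X\to[0,1]^\omega\times[0,1]^\omega,\qquad \sigma(x)=\bigl(\beta(x),\tau(f(x))\bigr).
\]
Then $\proj(\sigma(x))=\tau(f(x))$ for every $x\in X$, so the required square commutes by construction.

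It remains to verify that $\sigma$ is a topological embedding, and here I would use the sequential characterization of embeddings recalled in the preliminaries. Both coordinates of $\sigma$ are continuous, so $x_n\to x$ implies $\sigma(x_n)\to\sigma(x)$; conversely, if $\sigma(x_n)\to\sigma(x)$ then projecting onto the first coordinate gives $\beta(x_n)\to\beta(x)$, whence $x_n\to x$ since $\beta$ is an embedding. Thus $\sigma$ is an embedding and $(\sigma,\tau)$ witnesses $f\emb\proj$. There is no genuine obstacle: the whole argument rests on the universality of the Hilbert cube, and the only point meriting (minimal) attention is that the freedom in the first coordinate lets us encode a faithful copy of $X$ no matter how severely $f$ collapses points. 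This is exactly what upgrades $\proj$ from an upper bound to a maximum, and it is why forgetting a coordinate, rather than, say, an injective map, is the right choice of maximal function.
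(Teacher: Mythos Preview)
Your proof is correct and follows essentially the same approach as the paper: embed $X$ into the first factor of the Hilbert cube, send the second factor to $\tau\circ f$, and read off the commuting square. The only cosmetic difference is that the paper embeds all of $Y$ (and implicitly restricts to $\im f$) whereas you embed $\im f$ directly, which matches the definition of $\emb$ more closely.
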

		
		\begin{proof}
			Let $f:X\to Y$ be a continuous function between two separable metrizable spaces. The Hilbert cube is universal for separable metrizable spaces (\cite[(4.14)]{kechris2012classical}), so there exists embeddings $\sigma':X\to [0,1]^\omega$ and $\tau:Y\to [0,1]^\omega$. Define $\sigma:X \to [0,1]^\omega\times[0,1]^\omega$ by $\sigma(x)=(\sigma'(x),\tau(f(x))$. As $f$ is continuous and $X,Y$ are separable metrizable, one easily sees that $\sigma$ is an embedding. Moreover we have $\proj(\sigma(x))=\tau(f(x))$ for all $x\in X$. Hence $(\sigma,\tau)$ embeds $f$ into $\proj$, as desired.
		\end{proof}


		Remark that using the universal property of the Baire space (see \cite[(7.8)]{kechris2012classical}), the same argument shows that the corresponding projection $\proj':\omega^\omega\times \omega^\omega\to \omega^\omega$ is a maximum for embeddability on continuous functions between metrizable \zerodim{} spaces.
		
		We briefly mention that while the class of continuous functions admits a maximum, the class of discontinuous ones admits a two-elements basis. Let us call $d_0:\omega+1\to 2$ the characteristic function of $\{\omega\}$ and let $d_1:\omega+1\to \omega$ denote the bijection given by $d_1(\omega)=0$ and $d_1(n)=n+1$.
		
		\begin{proposition}\label{prop:contvsdiscont}
			Let $f:X\to Y$ be any function between separable metrizable spaces. Then exactly one of the two following holds:
			\begin{enumerate}
				\item either $f\emb \proj$, or
				\item either $d_0\emb f$ or $d_1\emb f$.
			\end{enumerate}
			Moreover $\proj$, $d_0$ and $d_1$ form an antichain.
		\end{proposition}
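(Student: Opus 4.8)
The plan is to prove the dichotomy by establishing that $f\emb\proj$ holds \emph{if and only if} $f$ is continuous, and then that every discontinuous $f$ admits $d_0\emb f$ or $d_1\emb f$. The elementary tool throughout is that continuity is inherited downward along $\emb$: if $f\emb g$ via $(\sigma,\tau)$ and $g$ is continuous, then $g\circ\sigma$ is continuous, hence so is $\tau\circ f$, and since $\tau$ is an embedding we recover $f=\tau^{-1}\circ(\tau\circ f)$ as a composition of continuous maps. Applying this with $g=\proj$ gives $f\emb\proj\Rightarrow f$ continuous, while the converse is exactly \cref{prop:ProjMaximum}. Since every $f$ is either continuous or not, this already yields that at least one of the two alternatives holds; and they are mutually exclusive because if $f\emb\proj$ then $f$ is continuous, so neither of the discontinuous functions $d_0,d_1$ can embed into $f$ (again by downward continuity).

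First I would treat the discontinuous case. Assuming $f$ discontinuous, fix $x$ and a sequence $x_n\to x$ with $f(x_n)\not\to f(x)$; after passing to a subsequence I may fix an open $U\ni f(x)$ with $f(x_n)\notin U$ for all $n$, whence $f(x_n)\neq f(x)$ and (thinning further) the $x_n$ are pairwise distinct and distinct from $x$. I then split on the $f$-values. If some value $y$ is attained infinitely often along $(f(x_n))_n$, then $y\in Y\setminus U$, so $y\neq f(x)$; setting $\sigma(n)=x_n$, $\sigma(\omega)=x$ and $\tau(0)=y$, $\tau(1)=f(x)$ gives $\tau\circ d_0=f\circ\sigma$, an embedding of $d_0$ into $f$. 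Otherwise I pass to a subsequence with the $f(x_n)$ pairwise distinct and take $\sigma$ as before together with $\tau(0)=f(x)$, $\tau(n+1)=f(x_n)$, so that $\tau\circ d_1=f\circ\sigma$.

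The hard part will be checking that these maps are genuine topological embeddings, i.e.\ that their images are discrete subspaces of $Y$ (the domains $\im d_0$ and $\im d_1=\omega$ being discrete, each $\tau$ is automatically continuous, and is an embedding exactly when its image is discrete). For $d_0$ this is trivial, the image being two points. For $d_1$ I must show $\{f(x)\}\cup\{f(x_n):n\}$ is discrete. Here the key point is that every accumulation point of $(f(x_n))_n$ lies in the closed set $Y\setminus U$, so $f(x)$ is always isolated from the sequence; and a short metric argument shows each $f(x_n)$ is isolated too. If $(f(x_n))_n$ accumulates at some $y$, I first pass to a subsequence converging to $y$ with all terms $\neq y$, noting $y\in Y\setminus U$ so that $y\neq f(x)$ and $y$ is not in the image set; if there is no accumulation point the set is already closed discrete. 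The same metric argument verifies that $\sigma$ is an embedding of $\omega+1$.

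Finally, for the antichain claim I would argue as follows. The relations $d_0\nemb\proj$ and $d_1\nemb\proj$ are instances of downward continuity, while $\proj\nemb d_0$ and $\proj\nemb d_1$ hold for cardinality reasons, since $\dom\proj$ is uncountable whereas $\omega+1$ is countable and $\sigma$ must be injective. For $d_0$ against $d_1$ I use the induced fibre equivalences, as in the observation following the definition of $\emb$: we have $d_1\nemb d_0$ because $\im d_1=\omega$ cannot inject into $\im d_0=\{0,1\}$, and $d_0\nemb d_1$ because any admissible $\sigma$ would have to reduce the fibre relation of $d_0$ to that of $d_1$, yet $d_0$ identifies all finite points while $d_1$ is injective, forcing a contradiction with the injectivity of $\sigma$.
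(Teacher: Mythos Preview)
Your proof is correct and follows essentially the same approach as the paper: use \cref{prop:ProjMaximum} for the continuous case and, for discontinuous $f$, split on whether some value is taken infinitely often along a witnessing sequence $(x_n)_n$. You are in fact more careful than the paper on one point, since you explicitly pass to a further subsequence in the $d_1$ case to ensure that $\{f(x)\}\cup\{f(x_n):n\}$ is discrete (so that $\tau$ is an embedding), a refinement the paper omits.
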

		
		\begin{proof}
			Let us first note that $\proj,d_0$ and $d_1$ are pairwise incomparable. As their images have distinct cardinalities, we have $\proj\not\emb d_0$, $\proj\not\emb d_1$ and $d_1\not\emb d_0$. Since $\proj$ is continuous while $d_0$ and $d_1$ are not, it follows that $d_0\not\emb \proj$ and $d_1\not\emb \proj$. Finally as $d_1$ is injective and $d_0$ is not, we get $d_0\not\emb d_1$.
			
			Since $\proj$ is maximal among continuous functions between separable metrizable spaces by \cref{prop:ProjMaximum}, it remains to see that any discontinuous function $f:X\to Y$ between separable metrizable spaces embeds either $d_0$ or $d_1$. So suppose that $f:X\to Y$ is not continuous and take a sequence $(x_n)_n$ in $X$ such that $x_n\to x$ but $f(x_n)\not\to f(x)$ in $Y$. By passing to a subsequence we can assume that for all $n\in\N$ we have $x_n\neq x$, and since $x_n\to x$, by going to a further subsequence we can also assume that $(x_n)_n$ is injective. We distinguish two cases. First, suppose that $f$ takes only finitely many values on $(x_n)_n$. Then we can go to a subsequence on which $f$ is constant and we set $\tau_0:2\to Y$, $\tau_0(0)= f(x_0)$ and $\tau_0(1)=f(x)$. Otherwise $f$ takes infinitely many values on $(x_n)_n$ and we can find a subsequence on which $f$ is injective. In this case, we define $\tau_1:\omega \to Y$ by $\tau_1(0)=f(x)$ and $\tau_1(n+1)=f(n)$.
			In both cases, we let $\sigma:\omega+1\to Y$ be given by $\sigma(n)=x_n$ and $\sigma(\omega)=x$.
			In the first case $(\sigma,\tau_0)$ embeds $d_0$ in $f$ and in the second case $(\sigma,\tau_1)$ embeds $d_1$ in $f$.
		\end{proof}

		\subsection{Ranks and embeddability}
		
		For separable metrizable spaces $X$ and $Y$ and a countable ordinal $\xi\geq 1$, recall that a function $f:X\to Y$ is Baire class $\xi$ if the preimage by $f$ of any open subset of $Y$ is $\mathbf{\Sigma}^0_{\xi+1}$ in $X$ (\cite[24.3]{kechris2012classical}). We let $\mathcal{B}_\xi(X,Y)$ denote the class of Baire class $\xi$ functions from $X$ to $Y$, and simply write $\mathcal{B}_\xi(X)$ when $X=Y$. In contrast with the class of continuous functions, we prove in this subsection that there are no greatest elements with respect to $\emb$ in the classes of functions $\mathcal{B}_\xi(X,Y)$ if $1 \leq \xi < \omega_1$, if $X$ is an uncountable Polish space and $Y$ is an arbitrary Polish space with $|Y| \geq 2$. Moreover if $X$ is a \zerodim{} Polish space or a euclidean space, then we further show that there are no maximal elements in $\mathcal{B}_\xi(X)$ for $2 \leq \xi <\omega_1$. This will be proved using a modified version of the ranks defined on the class $\mathcal{B}_1$ by Bourgain \cite{bourgain1978convergent}, Zalcwasser \cite{zalc1930}, Gillespie and Hurwicz \cite{gillespie1930sequences} and others and extensively studied by Kechris and Louveau \cite{kechris1990classification}. The theory of these ranks has been generalized to higher Baire classes in \cite{ekv}, \cite{viktor} and \cite{marci}. Usually these ranks are defined solely for real valued functions, but in our investigation we define a rank in a more general context, resembling the so called separation rank of Bourgain.
		
		We fix for the rest of this section two Polish spaces $X$ and $Y$ with $X$ uncountable and $|Y| \geq 2$.
		
		\begin{definition}
			If $A$ is a $\mathbf{\Delta}^0_{\xi+1}$ subset of $X$ then it can be expressed as a transfinite difference of a countable decreasing sequence of $\mathbf{\Pi}^0_{\xi}$ sets, i.e. \[A=\bigcup_{\eta<\lambda, \eta \text{ even}} F_{\eta} \setminus F_{\eta+1},\] where $F_\eta \in \mathbf{\Pi}^0_{\xi}$ (see \cite[22.E]{kechris2012classical}). We call the minimal length of such a sequence the \textit{rank} of $A$, and denote it by $\rho_{\xi,0}(A)$.
			
			If $A_1$ and $A_2$ are disjoint $\mathbf{\Pi}^0_{\xi+1}$ sets they can be separated by a $\mathbf{\Delta}^0_{\xi+1}$ set, i.e. there is a $\mathbf{\Delta}^0_{\xi+1}$ set $A$ so that $A_1 \subseteq A \subseteq X \setminus A_2$. We denote by $\rho_{\xi,1}(A_1,A_2)$ the minimal rank of such a separating set.
		\end{definition}

		From this definition and the Hausdorff-Kuratowski analysis (see \cite[Theorem 22.27]{kechris2012classical}) we get the following.
		
		\begin{fact}
			\label{f:unbounded}
			For each disjoint $A_1, A_2 \in \mathbf{\Pi}^0_{\xi+1}$ the rank $\rho_{\xi,1}(A_1,A_2)<\omega_1$. It is well known that the rank $\rho_{\xi,0}$ is unbounded in $\omega_1$ on the class  $\mathbf{\Delta}^0_{\xi+1}(X)$ (see e.g. \cite[Theorem 4.3]{ekv}). Hence, for every ordinal $\alpha<\omega_1$, taking sets $A$ and $X \setminus A$, so that $A \in \mathbf{\Delta}^0_{\xi+1}$ and $\rho_{\xi,0}(A)>\alpha$ we have $\rho_{\xi,1}(A,X \setminus A)>\alpha$.
		\end{fact}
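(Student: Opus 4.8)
The statement bundles three assertions, of which the middle one is explicitly quoted as known, so the plan is to argue the first and the last and merely cite the second. For the first assertion, that \(\rho_{\xi,1}(A_1,A_2)<\omega_1\) for disjoint \(A_1,A_2\in\mathbf{\Pi}^0_{\xi+1}\), I would first produce a single separating \(\mathbf{\Delta}^0_{\xi+1}\) set and then bound its \(\rho_{\xi,0}\)-rank. Since \(\xi\geq 1\) we have \(\xi+1\geq 2\), so the additive class \(\mathbf{\Sigma}^0_{\xi+1}\) has the reduction property and hence its dual \(\mathbf{\Pi}^0_{\xi+1}\) enjoys the first separation property (see, e.g., \cite[\S22]{kechris2012classical}); applied to the disjoint pair \(A_1,A_2\) this yields a \(\mathbf{\Delta}^0_{\xi+1}\) set \(A\) with \(A_1\subseteq A\subseteq X\setminus A_2\). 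By the Hausdorff--Kuratowski theorem \cite[Theorem 22.27]{kechris2012classical}, any such \(A\) is a transfinite difference of a decreasing sequence of \(\mathbf{\Pi}^0_\xi\) sets of some \emph{countable} length, so \(\rho_{\xi,0}(A)<\omega_1\). Taking the infimum of \(\rho_{\xi,0}\) over all separating \(\mathbf{\Delta}^0_{\xi+1}\) sets then gives \(\rho_{\xi,1}(A_1,A_2)\leq\rho_{\xi,0}(A)<\omega_1\), as required.

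For the middle assertion I would simply cite \cite[Theorem 4.3]{ekv}, exactly as the statement does: on the uncountable space \(X\) the rank \(\rho_{\xi,0}\) is unbounded in \(\omega_1\) on \(\mathbf{\Delta}^0_{\xi+1}(X)\), so for each \(\alpha<\omega_1\) there is \(A\in\mathbf{\Delta}^0_{\xi+1}\) with \(\rho_{\xi,0}(A)>\alpha\).

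The last assertion is then immediate once one chooses the right pair. Fix \(\alpha<\omega_1\) and take \(A\in\mathbf{\Delta}^0_{\xi+1}\) with \(\rho_{\xi,0}(A)>\alpha\) as above; both \(A\) and \(X\setminus A\) are \(\mathbf{\Delta}^0_{\xi+1}\), hence a fortiori \(\mathbf{\Pi}^0_{\xi+1}\), and they are disjoint. The key observation --- the step that converts a difference-rank statement into a separation-rank statement --- is that \emph{the only} set separating \(A\) from \(X\setminus A\) is \(A\) itself: any separating \(B\) satisfies \(A\subseteq B\subseteq X\setminus(X\setminus A)=A\), so \(B=A\). Consequently \(\rho_{\xi,1}(A,X\setminus A)=\rho_{\xi,0}(A)>\alpha\).

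There is no serious obstacle here: the content is essentially bookkeeping over standard facts about the Borel difference hierarchy. The only points that need care are locating the separation property at the correct level \(\xi+1\geq 2\) (so that it holds on an arbitrary Polish \(X\), with no appeal to zero-dimensionality) and recognising the unique-separating-set observation, which is precisely what makes \(\rho_{\xi,1}\) inherit the unboundedness of \(\rho_{\xi,0}\) and thereby drives the rank argument in the rest of the section.
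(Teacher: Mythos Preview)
Your proposal is correct and follows exactly the route the paper indicates: the paper does not give a detailed proof but simply states that the Fact follows ``from this definition and the Hausdorff--Kuratowski analysis (see \cite[Theorem 22.27]{kechris2012classical}),'' which is precisely what you unpack --- separation to produce a $\mathbf{\Delta}^0_{\xi+1}$ set, Hausdorff--Kuratowski to bound its rank, and the unique-separating-set observation for the last clause.
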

		
		\begin{definition}
			Let $f \in \mathcal{B}_\xi(X,Y)$. Define
			\[\rho_{\xi}(f)=\sup_{y_1,y_2 \in Y \text{ distinct}} \rho_{\xi,1}(f^{-1}(y_1),f^{-1}(y_2)).\]
		\end{definition}
		
		Notice that this definition makes sense: if $f \in \mathcal{B}_\xi(X,Y)$ then the preimages $f^{-1}(y_1), f^{-1}(y_2)$ are disjoint $\mathbf{\Pi}^0_{\xi+1}(X)$ sets. Moreover, the following easy claims show that this rank is meaningful on the class $\mathcal{B}_\xi(X,Y)$.
		
		\begin{lemma}\label{c:boundedInOmega1}
			If $f \in \mathcal{B}_\xi(X,Y)$ then $\rho_\xi(f)<\omega_1$.
		\end{lemma}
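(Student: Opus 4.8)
The plan is to reduce the supremum defining $\rho_\xi(f)$ --- which a priori ranges over the possibly uncountable set of pairs of distinct points of $Y$ --- to a supremum over a \emph{countable} family, and then to invoke \cref{f:unbounded} together with the fact that a countable supremum of countable ordinals is again countable. To set things up I would fix a compatible metric $d$ on $Y$ and a countable basis $\{U_n : n \in \omega\}$ of $Y$. The key observation I would isolate first is a \emph{monotonicity} property of the separation rank: if $B_1 \subseteq A_1$ and $B_2 \subseteq A_2$ with $A_1, A_2$ disjoint, then any $\mathbf{\Delta}^0_{\xi+1}$ set separating $A_1$ from $A_2$ also separates $B_1$ from $B_2$, so that $\rho_{\xi,1}(B_1,B_2) \leq \rho_{\xi,1}(A_1,A_2)$. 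This is immediate from the definition, since enlarging the sets to be separated only shrinks the collection of admissible separators, and taking a minimum over a larger collection of separators can only decrease it.

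Next I would consider, for each pair $(m,n)$ with $\overline{U_m} \cap \overline{U_n} = \emptyset$, the two sets $f^{-1}(\overline{U_m})$ and $f^{-1}(\overline{U_n})$. Since $f \in \mathcal{B}_\xi(X,Y)$ and $\overline{U_m}, \overline{U_n}$ are closed in $Y$, these are disjoint $\mathbf{\Pi}^0_{\xi+1}$ subsets of $X$, so $\rho_{\xi,1}(f^{-1}(\overline{U_m}), f^{-1}(\overline{U_n})) < \omega_1$ by \cref{f:unbounded}. As there are only countably many such pairs, the ordinal
\[
\beta = \sup\{\rho_{\xi,1}(f^{-1}(\overline{U_m}), f^{-1}(\overline{U_n})) : \overline{U_m} \cap \overline{U_n} = \emptyset\}
\]
is a countable supremum of countable ordinals, hence $\beta < \omega_1$.

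Finally I would bound $\rho_\xi(f)$ by $\beta$. Given distinct $y_1, y_2 \in Y$, I would set $r = d(y_1,y_2)/3$ and use second countability to pick basic open sets $U_m, U_n$ with $y_1 \in U_m \subseteq B(y_1,r)$ and $y_2 \in U_n \subseteq B(y_2,r)$; the triangle inequality then forces $\overline{U_m} \cap \overline{U_n} = \emptyset$. Since $f^{-1}(y_1) \subseteq f^{-1}(\overline{U_m})$ and $f^{-1}(y_2) \subseteq f^{-1}(\overline{U_n})$, the monotonicity property gives
\[
\rho_{\xi,1}(f^{-1}(y_1),f^{-1}(y_2)) \leq \rho_{\xi,1}(f^{-1}(\overline{U_m}),f^{-1}(\overline{U_n})) \leq \beta.
\]
Taking the supremum over all distinct pairs then yields $\rho_\xi(f) \leq \beta < \omega_1$, as desired.

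The only genuine obstacle here is the \emph{uniformity} of the bound, i.e. passing from a supremum indexed by uncountably many pairs of points to one indexed by a countable family. It is precisely the monotonicity of $\rho_{\xi,1}$ under enlarging the separated sets that dissolves this difficulty, since it lets me replace an arbitrary pair of distinct points by a fixed countable family of pairs of basic open sets with disjoint closures, for which \cref{f:unbounded} applies term by term.
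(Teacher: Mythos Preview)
Your proof is correct and follows essentially the same approach as the paper's: both replace the uncountable supremum over pairs of points by the countable supremum over pairs of basic open sets with disjoint closures, then invoke the monotonicity of $\rho_{\xi,1}$ (which you state explicitly and the paper uses implicitly). Your explicit construction of $U_m,U_n$ via the metric is a minor elaboration of the paper's one-line ``there exist $U,V\in\mathcal{B}$ with $\overline{U}\cap\overline{V}=\emptyset$''.
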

		\begin{proof}
			Let $\mathcal{B}=\{U_n: n \in \omega\}$ be a countable basis in $Y$. Define
			\[\rho'_{\xi}(f)=\sup \{\rho_{\xi,1}(f^{-1}(\overline{U}),f^{-1}(\overline{V})):U,V \in \mathcal{B}, \overline{U} \cap \overline{V}= \emptyset\}.\]
			Since $f \in \mathcal{B}_\xi(X,Y)$, if $\overline{U} \cap \overline{V}=\emptyset$ then the sets $f^{-1}(\overline{U}),f^{-1}(\overline{V})$ are disjoint $\mathbf{\Pi}^0_{\xi+1}(X)$ sets so the supremum is taken over a countable set of countable ordinals. Therefore, $\rho'_{\xi}(f)<\omega_1$.
			
			We show that $\rho_\xi(f) \leq \rho'_\xi(f)$, which is enough to prove the claim. Indeed, for every $y_1,y_2 \in Y$ distinct there exist $U,V \in \mathcal{B}$ with $\overline{U} \cap \overline{V}=\emptyset$ and $y_1 \in U$, $y_2 \in V$. But then if for a $\mathbf{\Delta}^0_{\xi+1}(X)$ set $A$ we have     
			$f^{-1}(\overline{U}) \subseteq A \subseteq X \setminus f^{-1}(\overline{V})$
			then of course
			$f^{-1}(y_1) \subseteq A \subseteq X \setminus f^{-1}(y_2).$
			Thus, by definition $\rho_{\xi,1}(f^{-1}(y_1),f^{-1}(y_2)) \leq \rho_{\xi,1}(f^{-1}(\overline{U}),f^{-1}(\overline{V}))$ and in turn $\rho_{\xi}(f) \leq \rho'_{\xi}(f)$.
		\end{proof}
		
		\begin{lemma}
			\label{c:unbounded}
			We have
			\[\sup\{ \rho_\xi(f)\mid f \in \mathcal{B}_\xi(X,Y)\}=\omega_1,\] in fact the rank of functions with range of cardinality at most $2$ is unbounded in $\omega_1$.
		\end{lemma}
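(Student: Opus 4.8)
The plan is to read this off almost directly from \cref{f:unbounded}. Fix an arbitrary countable ordinal $\alpha<\omega_1$; the goal is to produce a single function $f\in\mathcal{B}_\xi(X,Y)$ taking at most two values with $\rho_\xi(f)>\alpha$. Since $|Y|\geq 2$, I first fix two distinct points $y_1,y_2\in Y$. Then I invoke \cref{f:unbounded}: as $\rho_{\xi,0}$ is unbounded in $\omega_1$ on $\mathbf{\Delta}^0_{\xi+1}(X)$, there is a set $A\in\mathbf{\Delta}^0_{\xi+1}(X)$ with $\rho_{\xi,0}(A)>\alpha$, and for such a set the same fact guarantees $\rho_{\xi,1}(A,X\setminus A)>\alpha$.

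Next I would define $f\colon X\to Y$ by setting $f(x)=y_1$ for $x\in A$ and $f(x)=y_2$ for $x\in X\setminus A$, and check that $f$ has the required properties. Its range is contained in $\{y_1,y_2\}$, so it has cardinality at most $2$. To see that $f$ is Baire class $\xi$, observe that for any open $U\subseteq Y$ the preimage $f^{-1}(U)$ is one of the four sets $\emptyset$, $A$, $X\setminus A$, $X$, according to whether $U$ contains neither, only $y_1$, only $y_2$, or both of the chosen points. Since $A\in\mathbf{\Delta}^0_{\xi+1}(X)$, each of these sets lies in $\mathbf{\Delta}^0_{\xi+1}(X)\subseteq\mathbf{\Sigma}^0_{\xi+1}(X)$, so $f\in\mathcal{B}_\xi(X,Y)$.

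Finally, because $f^{-1}(y_1)=A$ and $f^{-1}(y_2)=X\setminus A$, the definition of $\rho_\xi$ as a supremum over distinct pairs yields
\[
\rho_\xi(f)\geq \rho_{\xi,1}\bigl(f^{-1}(y_1),f^{-1}(y_2)\bigr)=\rho_{\xi,1}(A,X\setminus A)>\alpha.
\]
As $\alpha<\omega_1$ was arbitrary and each individual rank is countable by \cref{c:boundedInOmega1}, this shows $\sup\{\rho_\xi(f)\mid f\in\mathcal{B}_\xi(X,Y)\}=\omega_1$, witnessed throughout by functions of range size at most $2$. I do not expect any genuine obstacle here: the entire difficulty is already encapsulated in \cref{f:unbounded} (which in turn rests on the cited unboundedness of the difference-hierarchy rank from \cite{ekv}), and all that remains is the routine bookkeeping that a two-valued function built from a $\mathbf{\Delta}^0_{\xi+1}$ set is Baire class $\xi$ and that its $\rho_\xi$-rank is computed by exactly the separating-set rank of $A$.
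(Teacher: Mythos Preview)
Your argument is correct and follows exactly the same route as the paper: pick $A\in\mathbf{\Delta}^0_{\xi+1}(X)$ of large rank via \cref{f:unbounded}, define the two-valued function $f=y_1$ on $A$ and $f=y_2$ off $A$, and read off $\rho_\xi(f)\geq\rho_{\xi,1}(A,X\setminus A)>\alpha$. The only difference is that you spell out why $f\in\mathcal{B}_\xi(X,Y)$, which the paper leaves implicit.
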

		\begin{proof}
			By \cref{f:unbounded} for every $\alpha<\omega_1$ there exists a $\mathbf{\Delta}^0_{\xi+1}(X)$ set $A$ so that $\rho_{\xi,1}(A,X \setminus A)>\alpha$. Choosing $y_1,y_2 \in Y$ distinct and letting
			$
			f(x)=
			\begin{cases}
			y_1, &\text{ if } x \in A\\
			y_2, &\text{ otherwise}
			\end{cases}
			$
			we have that $\rho_\xi(f)=\rho_{\xi,1}(A,X\setminus A)>\alpha$.
		\end{proof}
		
		\begin{proposition}
			\label{p:ranks}
			If $g \in \mathcal{B}_{\xi}(X,Y)$, $f \in \mathcal{B}_{\xi}(X',Y')$ and $g \emb f$ then $\rho_\xi(g) \leq \rho_\xi(f)$.
		\end{proposition}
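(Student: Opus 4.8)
The plan is to transport separating sets from $X'$ back to $X$ along the embedding $\sigma$ witnessing $g\emb f$, relying on the fact that a continuous preimage neither raises Borel rank nor lengthens a Hausdorff difference representation. Since $g\emb f$, there are embeddings $\sigma\colon X\to X'$ and $\tau\colon\im g\to\im f$ with $\tau\circ g=f\circ\sigma$; in particular $\sigma$ is continuous and $\tau$ is injective.

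First I would record the key identity. Fix distinct $y_1,y_2\in Y$ lying in $\im g$ (the remaining pairs force at least one preimage to be empty, so that the separation is witnessed by $X$ itself and contributes only a trivial rank). Since $\tau$ is injective, $\tau(y_1)\neq\tau(y_2)$, and for $i=1,2$ we have
\[
g^{-1}(y_i)=(\tau\circ g)^{-1}(\tau(y_i))=(f\circ\sigma)^{-1}(\tau(y_i))=\sigma^{-1}\bigl(f^{-1}(\tau(y_i))\bigr).
\]
Now let $A'\subseteq X'$ be a $\mathbf{\Delta}^0_{\xi+1}$ set realising $\rho_{\xi,1}(f^{-1}(\tau(y_1)),f^{-1}(\tau(y_2)))$, so that $f^{-1}(\tau(y_1))\subseteq A'\subseteq X'\setminus f^{-1}(\tau(y_2))$ and $\rho_{\xi,0}(A')=\rho_{\xi,1}(f^{-1}(\tau(y_1)),f^{-1}(\tau(y_2)))$. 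Applying $\sigma^{-1}$ and using the identity above yields $g^{-1}(y_1)\subseteq\sigma^{-1}(A')\subseteq X\setminus g^{-1}(y_2)$, so $\sigma^{-1}(A')$ separates $g^{-1}(y_1)$ from $g^{-1}(y_2)$.

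The crux is to check that $\sigma^{-1}(A')$ is again $\mathbf{\Delta}^0_{\xi+1}$ with $\rho_{\xi,0}(\sigma^{-1}(A'))\leq\rho_{\xi,0}(A')$. Writing a minimal difference representation $A'=\bigcup_{\eta<\lambda,\ \eta\text{ even}}F'_\eta\setminus F'_{\eta+1}$ with $(F'_\eta)_\eta$ a decreasing sequence of $\mathbf{\Pi}^0_\xi$ sets, I would use that preimages commute with unions and differences to get $\sigma^{-1}(A')=\bigcup_{\eta<\lambda,\ \eta\text{ even}}\sigma^{-1}(F'_\eta)\setminus\sigma^{-1}(F'_{\eta+1})$, where $(\sigma^{-1}(F'_\eta))_\eta$ is still decreasing and each $\sigma^{-1}(F'_\eta)$ is $\mathbf{\Pi}^0_\xi$, since $\sigma$ is continuous and continuous preimages do not raise Borel rank. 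This exhibits a difference representation of $\sigma^{-1}(A')$ of length at most $\lambda$, whence $\rho_{\xi,0}(\sigma^{-1}(A'))\leq\lambda=\rho_{\xi,0}(A')$. Combining the two facts gives
\[
\rho_{\xi,1}(g^{-1}(y_1),g^{-1}(y_2))\leq\rho_{\xi,0}(\sigma^{-1}(A'))\leq\rho_{\xi,0}(A')=\rho_{\xi,1}\bigl(f^{-1}(\tau(y_1)),f^{-1}(\tau(y_2))\bigr)\leq\rho_\xi(f).
\]
Taking the supremum over all distinct $y_1,y_2\in Y$ then yields $\rho_\xi(g)\leq\rho_\xi(f)$. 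I expect the only delicate point to be the preservation of the difference rank under $\sigma^{-1}$, that is, the verification that pulling back a minimal-length $\mathbf{\Pi}^0_\xi$-difference representation stays within $\mathbf{\Pi}^0_\xi$ and does not lengthen; everything else is a direct manipulation of the defining equation $\tau\circ g=f\circ\sigma$.
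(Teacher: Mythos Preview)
Your proof is correct and follows essentially the same approach as the paper's: pull back a separating $\mathbf{\Delta}^0_{\xi+1}$ set along $\sigma$, observe that the transfinite $\mathbf{\Pi}^0_\xi$-difference representation is preserved under continuous preimage, and use $\tau\circ g=f\circ\sigma$ to identify $g^{-1}(y_i)$ with $\sigma^{-1}(f^{-1}(\tau(y_i)))$. You are in fact slightly more careful than the paper in restricting to $y_1,y_2\in\im g$ before applying $\tau$, which is only defined on $\im g$.
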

		
		\begin{proof}
			Let $\sigma$ and $\tau$ witness the embedding $g \emb f$. Let $y_1,y_2 \in Y$ be distinct. It is enough to prove that $\rho_{\xi,1}(g^{-1}(y_1),g^{-1}(y_2)) \leq \rho_{\xi}(f)$. Since $\tau$ is an embedding, $f^{-1}(\tau(y_1))$ and $f^{-1}(\tau(y_2))$ are disjoint and there exists a set $A \in \mathbf{\Delta}^0_{\xi+1}(X')$ separating them such that $\rho_{\xi,0}(A) \leq \rho_\xi(f)$. Therefore, there exists a sequence $(F_\eta)_{\eta<\rho_\xi(f)}$ of $\mathbf{\Pi}^0_\xi(X')$ sets with
			\[A=\bigcup_{\eta<\rho_\xi(f), \eta \text{ even}} F_{\eta} \setminus F_{\eta+1}.\]
			Then clearly \[\sigma^{-1}(A)=\bigcup_{\eta<\rho_\xi(f), \eta \text{ even}} \sigma^{-1}(F_{\eta}) \setminus \sigma^{-1}(F_{\eta+1}),\]
			and since $\sigma$ is a topological embedding, the sequence $(\sigma^{-1}(F_{\eta}))_{\eta<\rho_\xi(f)}$ is a decreasing sequence of $\mathbf{\Pi}^0_\xi(X)$ sets, so $\rho_{\xi,0}(\sigma^{-1}(A)) \leq \rho_\xi(f)$. From $\tau \circ g=f \circ \sigma$ we get \[x \in g^{-1}(y_1) \iff \tau(g(x))=\tau(y_1) \iff\]\[ f(\sigma(x))= \tau(y_1) \iff  x \in \sigma^{-1}(f^{-1}(\tau(y_1))),\]
			so $g^{-1}(y_1)=\sigma^{-1}(f^{-1}(\tau(y_1)))$. But, since $A$ separates $f^{-1}(\tau(y_1))$ and $f^{-1}(\tau(y_2))$ we have that $\sigma^{-1}(A)$ separates $\sigma^{-1}(f^{-1}(\tau(y_1)))=g^{-1}(y_1)$ and $\sigma^{-1}(f^{-1}(\tau(y_2)))=g^{-1}(y_2)$. Thus,
			$\rho_{\xi,1}(g^{-1}(y_1),g^{-1}(y_2)) \leq \rho_{\xi,0}(A) \leq \rho_{\xi,0}(\sigma^{-1}(A)) \leq  \rho_{\xi}(f)$, which finishes the proof.
		\end{proof}
		
		As an immediate corollary we get the following weaker version of the main theorem of this section.
		\begin{corollary}
			\label{c:rank0}
			There is no $\emb$-greatest element among Baire class $\xi$ functions from $X\to Y$, i.e. $f\in\mathcal{B}_\xi(X,Y)$ and $g\emb f$ for every $g\in \mathcal{B}_\xi(X,Y)$.
		\end{corollary}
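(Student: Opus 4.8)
The plan is to read off the corollary directly from the three preceding results about the rank $\rho_\xi$: the monotonicity $g\emb f \Rightarrow \rho_\xi(g)\leq\rho_\xi(f)$ established in \cref{p:ranks}, the boundedness $\rho_\xi(f)<\omega_1$ for each individual $f\in\mathcal{B}_\xi(X,Y)$ from \cref{c:boundedInOmega1}, and the global unboundedness $\sup\{\rho_\xi(g)\mid g\in\mathcal{B}_\xi(X,Y)\}=\omega_1$ from \cref{c:unbounded}. The point is simply that an $\emb$-greatest element would have to dominate every rank, hence its own (countable) rank would bound the supremum of all ranks, which is absurd.

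Concretely, I would argue by contradiction. Suppose $f\in\mathcal{B}_\xi(X,Y)$ were $\emb$-greatest, meaning $g\emb f$ for every $g\in\mathcal{B}_\xi(X,Y)$. By \cref{c:boundedInOmega1} the ordinal $\alpha=\rho_\xi(f)$ is countable. Now fix an arbitrary $g\in\mathcal{B}_\xi(X,Y)$; since $g\emb f$, \cref{p:ranks} yields $\rho_\xi(g)\leq\rho_\xi(f)=\alpha$. As $g$ was arbitrary, $\alpha$ is an upper bound for $\{\rho_\xi(g)\mid g\in\mathcal{B}_\xi(X,Y)\}$, so this supremum is at most the countable ordinal $\alpha<\omega_1$. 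This flatly contradicts \cref{c:unbounded}, which asserts the supremum equals $\omega_1$. Hence no such $f$ exists.

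There is essentially no obstacle left to overcome here: all the genuine work has already been carried out in the three cited statements, and the corollary is a one-line combination of them. The only point worth a moment's care is that the unboundedness furnished by \cref{c:unbounded} is realized by functions actually lying in $\mathcal{B}_\xi(X,Y)$ — indeed by functions with range of cardinality at most $2$, as noted there — so that the supremum being contradicted is taken over exactly the class in which we are seeking a greatest element. With that observation the contradiction is immediate.

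\begin{proof}
Suppose toward a contradiction that some $f\in\mathcal{B}_\xi(X,Y)$ satisfies $g\emb f$ for every $g\in\mathcal{B}_\xi(X,Y)$. By \cref{c:boundedInOmega1}, $\rho_\xi(f)<\omega_1$. For every $g\in\mathcal{B}_\xi(X,Y)$ we have $g\emb f$, so \cref{p:ranks} gives $\rho_\xi(g)\leq\rho_\xi(f)$. Thus $\rho_\xi(f)$ is an upper bound for $\{\rho_\xi(g)\mid g\in\mathcal{B}_\xi(X,Y)\}$, whence $\sup\{\rho_\xi(g)\mid g\in\mathcal{B}_\xi(X,Y)\}\leq\rho_\xi(f)<\omega_1$. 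This contradicts \cref{c:unbounded}, which asserts that this supremum equals $\omega_1$.
\end{proof}
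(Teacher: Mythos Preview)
Your proof is correct and follows essentially the same approach as the paper: both argue by contradiction, invoking \cref{c:boundedInOmega1} to get $\rho_\xi(f)<\omega_1$, then use \cref{p:ranks} together with \cref{c:unbounded} to derive a contradiction. The only cosmetic difference is that the paper picks a single $g$ with $\rho_\xi(g)>\rho_\xi(f)$, whereas you bound the entire supremum; these are trivially equivalent.
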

		\begin{proof}
			By contradiction suppose that $f$ is a $\emb$-greatest element in $\mathcal{B}_\xi(X,Y)$. By \cref{c:boundedInOmega1} the rank of $f$ is some countable ordinal $\alpha$. By \cref{c:unbounded}, there exists a Baire class $\xi$ function $g:X\to Y$ whose rank is some countable ordinal $\beta$ strictly bigger than $\alpha$. Now as $f$ is maximal we have $g\emb f$, so by \cref{p:ranks} we have $\beta\leq \alpha$, a contradiction.
		\end{proof}
		
		Now we are ready to prove the main theorem of this section; it implies in particular \cref{thmINTRO3}.
		\begin{theorem}
			\label{t:rank}
			Let $1 \leq \xi <\omega_1$, $X, Y$ be Polish spaces such that there exists a $h:X \to X$ embedding with $|X \setminus h(X)|= \mathfrak{c}$ and $h(X)$ being $\mathbf{\Delta}^0_2$ and $|Y| \geq 2$. There is no $\emb$-maximal function in $\mathcal{B}_\xi$. In particular, this is true if $X$ is an uncountable \zerodim{} Polish space or a euclidean space.
		\end{theorem}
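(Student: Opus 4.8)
The plan is to show that \emph{no} $f\in\mathcal{B}_\xi(X,Y)$ is maximal by producing, for each such $f$, a function $g\in\mathcal{B}_\xi(X,Y)$ with $f\emb g$ but $g\nemb f$. The only invariant I would use to certify $g\nemb f$ is the rank $\rho_\xi$: by \cref{p:ranks} embeddability never increases it, so it suffices to build a $g$ lying above $f$ with $\rho_\xi(g)>\rho_\xi(f)$. The embedding $h:X\to X$ supplied by the hypotheses is exactly what creates room for this: I would copy $f$ onto the $\mathbf{\Delta}^0_2$ set $h(X)$ and install a function of very high rank on the leftover uncountable piece $Z=X\setminus h(X)$.

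Concretely, set $\alpha=\rho_\xi(f)$, a countable ordinal by \cref{c:boundedInOmega1}. Since $h(X)\in\mathbf{\Delta}^0_2$ its complement $Z$ is again $\mathbf{\Delta}^0_2$, hence $G_\delta$ and so Polish, and $|Z|=\mathfrak{c}$ makes it uncountable. Applying \cref{f:unbounded} \emph{inside} the uncountable Polish space $Z$ yields a set $A\in\mathbf{\Delta}^0_{\xi+1}(Z)$ whose separation rank $\rho_{\xi,1}(A,Z\setminus A)$, computed in $Z$, exceeds $\alpha$. Fixing distinct $y_0,y_1\in Y$ (possible as $|Y|\geq 2$), I would define $g:X\to Y$ by $g\rest{h(X)}=f\circ h^{-1}$, $g\rest{A}\equiv y_0$ and $g\rest{Z\setminus A}\equiv y_1$.

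Three verifications then remain. First, $g\in\mathcal{B}_\xi(X,Y)$: on the $\mathbf{\Delta}^0_{\xi+1}$ piece $h(X)$ the function $f\circ h^{-1}$ is Baire class $\xi$ because $h^{-1}$ is a homeomorphism, and on $Z$ the two-valued function is Baire class $\xi$ since $A\in\mathbf{\Delta}^0_{\xi+1}(Z)$; for $\xi\geq 1$ gluing two Baire class $\xi$ functions along a $\mathbf{\Delta}^0_{\xi+1}$-partition stays Baire class $\xi$. Second, $f\emb g$: the pair $(h,\mathrm{id}_{\im f})$ embeds $f$ into $g$, since $g\circ h=f$ forces $\im f\subseteq\im g$, the inclusion $\im f\hookrightarrow\im g$ is a topological embedding, and $\mathrm{id}\circ f=g\circ h$. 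Third, $\rho_\xi(g)>\alpha$: here $g^{-1}(y_0)\supseteq A$ and $g^{-1}(y_1)\supseteq Z\setminus A$, so any $\mathbf{\Delta}^0_{\xi+1}(X)$ set $B$ separating $g^{-1}(y_0)$ from $g^{-1}(y_1)$ restricts to a separator $B\cap Z$ of $A$ and $Z\setminus A$ in $Z$; as intersecting a difference representation with $Z$ does not increase $\rho_{\xi,0}$ (precisely the mechanism of \cref{p:ranks}), this gives $\rho_{\xi,1}(g^{-1}(y_0),g^{-1}(y_1))\geq\rho_{\xi,1}(A,Z\setminus A)>\alpha$, whence $\rho_\xi(g)>\alpha=\rho_\xi(f)$. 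By \cref{p:ranks} this excludes $g\emb f$, so $f$ is not maximal.

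For the \enquote{in particular}, it remains to exhibit a suitable $h$. When $X$ is a euclidean space one takes $h$ to be a homeomorphism of $X$ onto an open ball: its image is open, hence $\mathbf{\Delta}^0_2$, and the complement is closed of size $\mathfrak{c}$. When $X$ is uncountable \zerodim{} Polish one instead produces a \emph{closed} (hence $\mathbf{\Delta}^0_2$) copy of $X$ in $X$ with uncountable complement: if $X$ is nowhere locally compact then $X\cong\omega^\omega$ by the Alexandrov--Urysohn characterisation and $h(x)=0\conc x$ works, whereas if $X$ is compact one splits the perfect kernel into two Cantor sets $K$ and $K'$ and embeds $X$ into $K$, so that the compact image is closed and $K'$ lies in the complement; the general \zerodim{} case is then assembled from these along the locally compact part of $X$. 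I expect the genuine difficulty to be precisely this last point---verifying in full generality that every uncountable \zerodim{} Polish space carries a self-embedding with $\mathbf{\Delta}^0_2$ image and continuum-sized complement---whereas the rank bookkeeping in the core argument, though it must be run on the subspace $Z$ rather than on $X$, is controlled cleanly by \cref{p:ranks} and \cref{f:unbounded}.
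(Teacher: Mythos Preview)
Your proof is correct and follows essentially the same approach as the paper: copy $f$ onto $h(X)$ via $f\circ h^{-1}$, install a two-valued function of rank exceeding $\rho_\xi(f)$ on the leftover piece, and use \cref{p:ranks} to conclude $g\nemb f$. The only variation is that you work on the entire complement $Z=X\setminus h(X)$ (exploiting that it is $G_\delta$ hence Polish), whereas the paper first passes to a closed perfect set $P\subseteq Z$; the relativization argument for the rank goes through either way, and your two-piece definition of $g$ is marginally cleaner than the paper's three-piece one.
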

		\begin{proof}
			Let $f$ be an arbitrary function in $\mathcal{B}_\xi(X,Y)$. Take a perfect set $P \subseteq X  \setminus h(X)$. As in \cref{c:unbounded} take a $\Delta^0_{\xi+1}(P)$ set $A$ with $\rho_{\xi,0}(A)>\rho_\xi(f)$ and define a function $g_0:P \to Y$ by picking two distinct points $y_1$ and $y_2$ in $Y$ and letting $g_0(x)=y_1$ if $x\in A$ and $g_0(x)=y_2$ otherwise. Let $y \in Y$ be arbitrary and define
			
			$
			g(x)=
			\begin{cases}
			g_0(x), &\text{ if $x \in P$}\\
			f \circ h^{-1}(x), &\text{ if $x \in \im h$}\\
			y, &\text{ otherwise.}
			\end{cases}
			$
			
			It is easy to see that $g$ is a Baire-$\xi$ function: for an open $U \subseteq Y$ we have that $g^{-1}(U)$ is the union of at most three sets from the collection $h(f^{-1}(U)),P, X \setminus (P \cup h(A)), A, P \setminus A$. Clearly, using that $\im h$ is $\mathbf{\Delta}^0_2$ and $h$ is a homeomorphism, we get that all these sets are $\mathbf{\Sigma}^0_{\xi+1}$.
			
			On the one hand, $g \circ h=f \circ h^{-1} \circ h=f$, so $f \emb g$. On the other hand if a set $B$ separates $g^{-1}(y_1)$ from $g^{-1}(y_2)$ then $B \cap P=A$. By the fact that $P$ is closed, intersecting each element of a transfinite sequence of $\mathbf{\Pi}^0_{\xi+1}$ sets in $X$ with $P$ we still get such a sequence with at most the same length. Hence if we can express $B$ as a union of differences of a sequence of length $\alpha$, then this gives an expression of $A$ in $P$ with a union of the differences of a sequence of length at most $\alpha$. Thus, by the definition of our rank, $\rho_\xi(g) \geq \rho_{\xi,0}(A)>\rho_\xi(f)$. Then by \cref{p:ranks} we get $g \not \emb f$, so $f$ is not maximal.
		\end{proof}
		
		\begin{remark}
			It is not hard to see that \cref{t:rank} cannot be extended to arbitrary Polish spaces: let $X$ be a so called Cook continuum \cite{cook1967continua}, then every continuous non-constant function from $X$ to $X$ is the identity (see \cite{gelbaum2003counterexamples}). Hence, if we consider the functions from $X$ to $X$ then the embedding relation trivializes, that is, $g \emb f$ if and only if $f=g$. Thus, every function is maximal.
		\end{remark}
		
		
		\section{Concluding remarks and open questions}\label{section:last}

		Combining \cref{t:thechaossideYY,t:locallyconstant,t:finitelymanylimitpoints} we have obtained a dichotomy for the quasi-order of embeddability on $\C(X,Y)$ when $X$ is locally compact:
		\begin{enumerate}
			\item either it continuously reduces an analytic complete quasi-order, or
			\item it is a better-quasi-order.
		\end{enumerate}
		
		Firstly, when $X$ is locally compact and not compact, we do not know in general the exact complexity of the quasi-order of embeddability on the Polish space $\C(X,Y)$. While a direct calculation gives a $\mathbf{\Sigma}^1_2$ definition, there are examples where embeddability is not analytic. We now briefly explain this fact. Notice that a function $f:X\to X$ is an embedding iff $f\emb \text{id}_X$. Hence if $\emb$ is analytic on $\C(X,X)$, then so is the set of embeddings of $X$ into itself.
		
		\begin{proposition}
			The set of embeddings of $\N\sqcup 2^\omega$ into itself is co-analytic complete for the compact-open topology.
		\end{proposition}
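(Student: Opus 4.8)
The plan is to prove the two halves of the completeness statement separately: first that the set $\mathrm{Emb}$ of self-embeddings of $X:=\N\sqcup 2^\omega$ is $\mathbf{\Pi}^1_1$ as a subset of $\C(X,X)$, and then that it is $\mathbf{\Pi}^1_1$-hard, by constructing a continuous reduction of the set $\mathrm{WF}$ of well-founded trees on $\omega$ (which is $\mathbf{\Pi}^1_1$-complete by \cite[Theorem 27.1]{kechris2012classical}) into $\mathrm{Emb}$. Since $X$ is locally compact, $\C(X,X)$ is Polish and the evaluation map $\C(X,X)\times X\to X$ is continuous, so $\mathrm{Emb}$ being $\mathbf{\Pi}^1_1$-complete will in particular imply it is not analytic.

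For the upper bound I would use the sequential characterization of embeddings recalled in the preliminaries: a continuous $\sigma$ is an embedding iff for every sequence $(x_n)_n$ in $X$ and every $x\in X$, $\sigma(x_n)\to\sigma(x)$ implies $x_n\to x$ (the converse implication being continuity). This reads $\forall (x_n)_n\in X^\omega\,\forall x\in X\,\bigl(\sigma(x_n)\to\sigma(x)\to x_n\to x\bigr)$. As evaluation is continuous, $(\sigma,(x_n)_n,x)\mapsto\sigma(x_n)$ and $\mapsto\sigma(x)$ are continuous, and ``$y_n\to y$'' is Borel; so the matrix is Borel and the universal quantifier over the Polish space $X^\omega\times X$ shows $\mathrm{Emb}$ is $\mathbf{\Pi}^1_1$.

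For hardness I would identify the discrete summand of $X$ with $\omega^{<\omega}$, and encode a tree $T\subseteq\omega^{<\omega}$ into a continuous self-map $\sigma_T$ designed so that an embedding failure occurs exactly along an infinite branch. Concretely, let $D:2^\omega\to 2^\omega$ double each coordinate (an embedding whose image $\{w:w_{2i}=w_{2i+1}\,\forall i\}$ is closed and nowhere dense), let $j\colon\omega^\omega\to 2^\omega$, $j(a_0,a_1,\dots)=0^{a_0}10^{a_1}1\cdots$ be the usual unary coding with its finite version $j_{\mathrm{fin}}:\omega^{<\omega}\to 2^{<\omega}$, and for $s\in\omega^{<\omega}$ set $e(s):=D(j_{\mathrm{fin}}(s))\conc 0\conc 1\conc 0^\infty\in 2^\omega$. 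Define
\[
\sigma_T(x)=\begin{cases} D(x) & x\in 2^\omega,\\ e(s) & x=s\in\omega^{<\omega},\ s\in T,\\ s & x=s\in\omega^{<\omega},\ s\notin T.\end{cases}
\]
The trailing marker $01$ breaks the pairing pattern, so $e(s)\notin\mathrm{im}(D)$ and $s\mapsto e(s)$ is injective; hence $\sigma_T$ is a continuous injection. The map $T\mapsto\sigma_T$ is continuous since any compact $K\subseteq X$ meets $\omega^{<\omega}$ in a finite set, so $\sigma_T\rest K$ depends on $T$ only through finitely many clopen conditions ``$s\in T$''.

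The verification then splits into the two directions. If $T$ is ill-founded, pick $w\in[T]$; then $j_{\mathrm{fin}}(w\rest k)$ runs through the prefixes of $j(w)$, so $\sigma_T(w\rest k)=e(w\rest k)\to D(j(w))=\sigma_T(j(w))$ while $w\rest k\not\to j(w)$ in $X$, witnessing that $\sigma_T$ is not an embedding. The main obstacle is the converse: showing that if $T$ is well-founded then \emph{no} sequence produces a failure. Here I would argue that whenever $\sigma_T(x_n)\to\sigma_T(x)$, convergence to a point of $\mathrm{im}(D)$ (the case $x\in 2^\omega$) or to an $e$-value (the case $x$ isolated) forces, via the $01$-marker together with the closedness of $\mathrm{im}(D)$, that the relevant terms are eventually images $e(s_n)$ with $j_{\mathrm{fin}}(s_n)$ sharing arbitrarily long initial segments with the target; this makes the $s_n$ converge in $\omega^{<\omega}$ to some $w$ with $j(w)$ equal to the target and, by downward closure of $T$, with $w\in[T]$ — contradicting well-foundedness unless $x_n\to x$ genuinely holds. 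Combining this equivalence $\sigma_T\in\mathrm{Emb}\Leftrightarrow T\in\mathrm{WF}$ with the $\mathbf{\Pi}^1_1$ upper bound yields the claimed completeness.
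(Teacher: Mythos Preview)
Your overall plan matches the paper's: establish the $\mathbf{\Pi}^1_1$ upper bound from the sequential characterisation, then reduce from well-founded trees. The upper-bound argument is fine, and the hardness construction is essentially the paper's idea (the paper maps into $3^\omega$ using a $2^\omega$ tail as end-marker, while you stay inside $2^\omega$ via the doubling map $D$ and a $01$ marker). However, the direction ``$T$ well-founded $\Rightarrow$ $\sigma_T$ is an embedding'' fails as written. Take the well-founded tree $T=\{\emptyset\}\cup\{(n):n\in\omega\}$. Then
\[
\sigma_T\big((n)\big)=e\big((n)\big)=D(0^n1)\conc 01\conc 0^\infty=0^{2n}11\,01\,0^\infty\longrightarrow 0^\infty=D(0^\infty)=\sigma_T(0^\infty),
\]
whereas $(n)\not\to 0^\infty$ in $\omega^{<\omega}\sqcup 2^\omega$. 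So $\sigma_T$ is not an embedding even though $T$ is well-founded, and your map is not a reduction. The flaw in your sketch is the sentence ``this makes the $s_n$ converge \ldots\ to some $w$ with $j(w)$ equal to the target'': when $e(s_n)\to D(x)$, the point $x$ can have only finitely many $1$'s (here $x=0^\infty$), so $x\notin\im j$ and there is no such $w\in\omega^\omega$ at all.

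The repair is straightforward: reduce from well-founded trees on $\{0,1\}$ rather than on $\omega$ (these are still $\mathbf{\Pi}^1_1$-complete). For $s\in 2^{<\omega}$ the string $j_{\mathrm{fin}}(s)$ lies in $\{1,01\}^{<\omega}$ and hence contains a $1$ in at least every other position, so any pointwise limit of the $j_{\mathrm{fin}}(s_n)$ must have infinitely many $1$'s and therefore equals $j(w)$ for some $w\in 2^\omega$; your argument then goes through and yields $w\in[T]$. It is worth noting that the paper's own short proof is vulnerable to the very same counterexample: with $T=\{\emptyset\}\cup\{(n):n\in\omega\}$ one has $f_T\big((n)\big)=0^n1\,2^\omega\to 0^\infty=f_T(0^\infty)$, so the clause ``when $T$ is wellfounded, then $f_T$ is easily seen to be an embedding'' likewise needs the restriction to finitely branching trees.
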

		\begin{proof}
			We give a continuous reduction from the wellfounded trees on $\omega$ to the set of embeddings. For any tree $T$ on $\omega$, we define a continuous function $f_T: \omega^{<\omega} \sqcup 2^\omega \to \omega^{<\omega} \sqcup 3^\omega$ as follows. If $s=(s_0,\ldots,s_n)\in \omega^{<\omega}$, then $f_T(s)=s$ if $s\notin T$ and $f_T(s)=0^{s_0}10^{s_1}1 \cdots 0^{s_n}1 2^\omega$ if $s\in T$. If $\alpha\in 2^\omega$, then $f_T(\alpha)=\alpha$. Clearly $f_T$ is continuous and injective for all tree $T$. Suppose that $T$ has an infinite branch $\gamma$, then $(\gamma\rest{n})_n$ does not converge in the domain, while $f_T(\gamma\rest{n})_n$ converges to some $f_T(\alpha)$ in the range. Hence $f_T$ is not an embedding. When $T$ is wellfounded, then $f_T$ is easily seen to be an embedding.
			
			To see that the map $T\mapsto f_T$ is continuous, notice that any compact subset $K$ of $\omega^{<\omega} \sqcup 2^\omega$ contains only finitely many points in $\omega^{<\omega}$. Therefore the condition $f_T(K)\subseteq U$ for some open set $U$ only depends on a finite piece of $T$.
		\end{proof}
		
		It follows that embeddability is not analytic on $\C(\N\sqcup 2^\omega,\N\sqcup 2^\omega)$. Moreover if $Y$ is uncountable and not compact, then $\N\sqcup 2^{\omega}$ embeds in $Y$. Therefore by \cref{cor:embeddingrangesideY}, it follows that embeddability is not analytic on $\C(\N\sqcup 2^\omega, Y)$ either. By \cref{t:thechaossideYY}, embeddability is a $\mathbf{\Sigma}^1_1$-hard quasi-order, but we do not know its exact complexity.

		Secondly, we believe that one can drop the local compactness assumption on $X$ in the above dichotomy. From the proof of \cref{t:finitelymanylimitpoints} it appears that an essential step in this direction is to prove the following conjecture.
		
		\begin{conjecture}
			The quasi-order $(\C([\omega]^2_+,\Q),\emb)$ is \bqo{}.
		\end{conjecture}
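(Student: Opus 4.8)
The plan is to mimic the co-homomorphism strategy of \cref{t:locallyconstant,t:finitelymanylimitpoints}: first reduce to rational-valued functions via \cref{fact:fctctblerange}, so that it suffices to better-quasi-order $(\C([\omega]^2_+,\Q),\emb)$; then assign to each $f$ a labelled structure lying in a class that preserves \bqo{}s, in such a way that comparability of the labels forces $f\emb g$.

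First I would pin down the combinatorics of the self-embeddings of $[\omega]^2_+$. Write $\infty$ for the unique limit point and decompose the isolated points into the clopen ``columns'' $A_m=\{\{m,n\}\mid n>m\}$, each a copy of $\omega$ with the whole $A_m$ tending to $\infty$ as $m\to\infty$. One checks that any embedding $\sigma$ must fix $\infty$ (by \cref{fact:onCBrank}, since $\{\infty\}=\CB_1([\omega]^2_+)$), must send each $A_m$ to a closed discrete set and hence to one meeting only finitely many columns, and must satisfy that the column indices of $\sigma(A_m)$ tend to $\infty$ with $m$ while each target column is met by only finitely many of the $\sigma(A_m)$. Thus an embedding is a ``finite-to-finite, proper at $\infty$'' redistribution of columns together with arbitrary injections inside the columns. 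In particular, and this is the source of the difficulty, columns may be both split and merged.

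Next, each column function $f\rest{A_m}$ is locally constant on a discrete space and so is an element of the \bqo{} $\LC_\Q$ of \cref{t:locallyconstant}, while $f(\infty)$ together with the tail behaviour records that $(f\rest{A_m})_m$ concentrates at the constant value $f(\infty)$. I would therefore seek a class $\mathcal{C}$ of ``convergent $\omega$-indexed sequences of columns'', whose morphisms are exactly the proper redistributions above, and try to prove that $\mathcal{C}$ preserves \bqo{}s; labelling the columns by $\LC_\Q$ (or by $\POD_\star$, using \cref{carroyPOD}) and using $\mathcal{Q}$ to carry the range-embedding $\tau$ as in the earlier proofs (\cref{corvEMS}) should then produce a co-homomorphism $f\mapsto\Lambda_f$ into a \bqo{}. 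A promising reformulation is that $[\omega]^2_+$ is homeomorphic to a countable linearly ordered space (a sum of $\omega$-blocks separated by gaps, topped by $\infty$), which opens the door to applying the van Engelen--Miller--Steel theorem (\cref{vEMS}) once one understands how badly a topological embedding can fail to respect that order.

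The hard part will be the assembly step: showing that $\Lambda_f\le\Lambda_g$ produces a \emph{single} global embedding $\sigma$ and a \emph{single} global range-embedding $\tau$ with $\tau\circ f=g\circ\sigma$. This is exactly where the arguments for \cref{t:locallyconstant,t:finitelymanylimitpoints} break down: there the relevant fibers are clopen (or there are only finitely many limit points), so fibrewise embeddings glue automatically by a \cref{LemWedgeEmb}-style gluing; here infinitely many fibers accumulate at the single, non-locally-compact point $\infty$, and the flexible splitting and merging of columns means that a per-value profile loses the within-column correlations between values, whereas a per-column label must still be reconciled with one global relabelling $\tau$. Isolating the correct \bqo{} of ``local types at $\infty$'' that simultaneously remembers the within-column structure and survives the redistribution is the crux of the conjecture, and precisely the obstruction to importing the locally compact proof.
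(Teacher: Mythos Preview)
This statement is a \emph{conjecture} in the paper, not a theorem; the paper gives no proof and explicitly presents it as the essential open step needed to drop the local compactness hypothesis from \cref{t:finitelymanylimitpoints}. There is therefore no paper proof to compare against.

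Your proposal is not a proof either, and you say so yourself: you outline the co-homomorphism strategy, correctly analyze the structure of self-embeddings of $[\omega]^2_+$ (fixing $\infty$, finite-to-finite proper redistribution of columns), and then explicitly identify the gap. The assembly step that works in \cref{t:locallyconstant,t:finitelymanylimitpoints}---gluing fibrewise embeddings over clopen fibers, or over finitely many limit points---fails here because infinitely many columns accumulate at the single non-locally-compact point $\infty$, columns can be both split and merged, and a per-column label must still be reconciled with a single global $\tau$. You do not supply the missing \bqo{} of ``local types at $\infty$'' or any substitute; you end by naming this as ``the crux of the conjecture.'' That diagnosis is accurate and matches the paper's own framing, but it means what you have written is a discussion of the obstruction, not a proof.
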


		Let us now make a remark on embeddability beyond the case of continuous functions. A direct application of \cite[Theorem 3.3]{vEMSbqo} shows that when $X$ is a compact \zerodim{} Polish space and $Y$ is finite the Baire class $1$ functions from $X$ to $Y$ are \bqo{} under embeddability. This begs to consider the following much broader problem.
		
		\begin{question}
			If $X$ has only finitely many limit points, or if $Y$ is discrete, are the Baire class 1 functions from $X$ to $Y$ better-quasi-ordered by embeddability?
			What about all Borel functions from $X$ to $Y$?
		\end{question}

		In a different direction, recall that a \emph{basis} for some upward-closed subset $S$ of a quasi-order $Q$ is a subset $B\subseteq S$ such that $q\in S$ iff $\exists b\in B \ b\leq q$. As a matter of fact, a quasi-order is a well-quasi-order if and only if every upward-closed subset admits a finite basis. As we proved that embeddability on $\C(X,Y)$ is in most cases very complicated and far from being a well-quasi-order, this shows the existence of many upward-closed sets of continuous functions with no finite basis. This contrasts drastically with the existence of finite bases for some important classes such as: the discontinuous functions (\cref{prop:contvsdiscont}), the functions that are not $\sigma$-continuous with closed witnesses (\cite{soleckidecomposing,carroymiller2}); the non $\sigma$-continuous functions (\cite{pawlikowski2012decomposing}), the non Baire class $1$ functions (\cite{carroymiller2}). This motivates us to make a conjecture and formulate a general problem:

		\begin{conjecture}
			For all $\alpha<\omega_1$, the class of functions that are not Baire class $\alpha$ admits a finite basis for topological embeddability.
		\end{conjecture}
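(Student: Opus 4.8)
The plan is to exhibit, for each $\alpha<\omega_1$, an explicit finite list $b_1,\dots,b_k$ of functions that are not Baire class $\alpha$, and then to establish the two halves of the basis property: that the $b_i$ form an antichain inside the class, and that every function which is not Baire class $\alpha$ embeds at least one $b_i$. First I would record that the complement of $\mathcal{B}_\alpha(X,Y)$ really is upward-closed under $\emb$: if $f\emb g$ with witnesses $(\sigma,\tau)$, then $f=\tau^{-1}\circ g\circ\sigma$, and since $\sigma$ and the corestriction of $\tau$ are topological embeddings (so $\tau^{-1}$ is continuous on $\im g$), the assumption $g\in\mathcal{B}_\alpha$ forces $f\in\mathcal{B}_\alpha$. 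This is the qualitative counterpart of \cref{p:ranks} and confirms that a basis is the right object to seek. Just as in \cref{prop:contvsdiscont}, where the two generators $d_0,d_1$ arise according to whether the discontinuity is realized with finitely or infinitely many values, one expects the generators at level $\alpha$ to be indexed by a small amount of combinatorial data (value-multiplicity together with the successor/limit type of $\alpha$), and it is this boundedness that would keep the list finite.

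For the generators themselves, the natural candidates generalize $d_0,d_1$: a function $b$ fails to be Baire class $\alpha$ exactly when some $b^{-1}(U)\notin\mathbf{\Sigma}^0_{\alpha+1}$, and the cleanest way to force this on a zero-dimensional domain is to let a single fiber be a complete $\mathbf{\Pi}^0_{\alpha+1}$ set (equivalently, to make the two-valued $b$ the characteristic function of a properly $\mathbf{\Pi}^0_{\alpha+1}$ set, which is then not $\mathbf{\Delta}^0_{\alpha+1}$). The universality direction is where the real work lies. Given $f$ not Baire class $\alpha$, fix an open $U$ with $f^{-1}(U)\notin\mathbf{\Sigma}^0_{\alpha+1}$; a Louveau--Saint~Raymond style separation/reduction dichotomy for the pointclass $\mathbf{\Sigma}^0_{\alpha+1}$ should then yield a Cantor set $C\subseteq X$ on which $f^{-1}(U)\cap C$ realizes a canonical $\mathbf{\Pi}^0_{\alpha+1}$-complete pattern. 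The crux is to upgrade this continuous \emph{reduction} into a genuine topological \emph{embedding} $\sigma$ of the domain of some $b_i$, together with the determined image-embedding $\tau$ satisfying $\tau\circ b_i=f\circ\sigma$.

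The hard part, and the reason this is stated only as a conjecture, is precisely this upgrade: embeddability is far more rigid than Wadge or Borel reducibility, since $\sigma$ must be injective and must reflect convergence in both directions, while $\tau$ is forced by $\sigma,f,b_i$ through $\tau(b_i(x))=f(\sigma(x))$. One must therefore run the extraction at the level of the fiber map rather than of a single set, choosing the Cantor scheme so that along it $f$ separates the relevant target values injectively---exactly the phenomenon isolated by the pseudo-embedding analysis and \cref{EmbSigmaQuasiEmb}---while still realizing the full complexity of the generator. Controlling this uniformly across all $\alpha<\omega_1$, and proving that only finitely many minimal patterns survive at each level, is where I expect the better-quasi-order technology of \cref{sec:ORDER}---in particular the fact that $\POD$ and the labelled classes $Q^{\mathcal{Q}}$ are \bqo{}---to be indispensable, since it is what forbids infinite antichains of competing minimal generators. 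A successful proof would presumably proceed by transfinite induction on $\alpha$, reducing a level-$\alpha$ failure to a level-$\beta$ failure on a fiber for some $\beta<\alpha$ and patching generators via \cref{LemWedgeEmb}, with the known cases $\alpha\le 1$ (\cref{prop:contvsdiscont} and \cite{carroymiller2}) serving as the base of the induction.
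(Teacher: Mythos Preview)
The statement you are addressing is a \emph{conjecture}, not a theorem: the paper offers no proof whatsoever, presenting it only as an open problem in the concluding remarks. There is therefore nothing in the paper to compare your proposal against.

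Your write-up is not a proof but an outline of a possible strategy, and you yourself acknowledge this (``the reason this is stated only as a conjecture''). As a strategy sketch it is sensible---the upward-closure observation is correct, the analogy with $d_0,d_1$ is natural, and the idea of extracting a canonical $\mathbf{\Pi}^0_{\alpha+1}$ pattern via a Hurewicz/Louveau--Saint~Raymond style dichotomy is the expected line of attack---but none of the hard steps are carried out. In particular, upgrading a continuous reduction to a topological embedding of functions (with the forced $\tau$) is exactly the obstacle the paper leaves open, and invoking the \bqo{} results of \cref{sec:ORDER} does not by itself bound the number of minimal generators at a given level: those results concern specific small domains, not arbitrary Baire-class failures. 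So what you have is a plausible roadmap, not a proof, and the paper contains no proof either.
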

		
		\begin{question}
			Which classes of functions admit a finite basis?
		\end{question}
		
		Finally, the situation described by \cref{prop:contvsdiscont} suggests an even more specific question:
		
		\begin{question}
			Is there, continuous functions aside, other examples of a class of functions that is both
			defined as the downward closure of finitely-many functions with respect to embeddability and
			whose complement has a finite basis?
		\end{question}
		
		\subsection{Acknowledgments}
		We would like to thank the Descriptive Set Theory group at Paris 6 and the Turin logic group for their patience and careful attention during the seminar presentation of the material exposed in this paper.

		\printbibliography
		
		\medskip

	\end{document}